\def\thetitle{{ . }}
\newtheorem{thm}{Theorem}[section]
\newtheorem{lem}[thm]{Lemma}
\newtheorem{cor}[thm]{Corollary}
\newtheorem{prop}[thm]{Proposition}
\newtheorem{que}{Question}
\newtheorem{conj}{Conjecture}
\newtheorem{defn}[thm]{Definition}
\theoremstyle{remark}
\newtheorem*{rmk}{\textbf{Remark}}
\theoremstyle{definition}
\newtheorem*{defn*}{Definition}
\newtheorem*{que*}{Question}
\newcommand\R{\mathbb{R}}
\newcommand\C{\mathbb{C}}
\newcommand\Z{\mathbb{Z}}
\newcommand\Hom{\operatorname{Hom}}
\newcommand\torsion{\operatorname{torsion}}
\DeclarePairedDelimiter\floor{\lfloor}{\rfloor}
\title[Thurston unit ball of a family of $n$-chained links]{Thurston unit ball of a family of $n$-chained links and their fibered face}
\author{Juhun Baik}
\address{Juhun Baik, Department of Mathematical Sciences, KAIST,  
291 Daehak-ro, Yuseong-gu, Daejeon 34141, South Korea }
\email{jhbaik@kaist.ac.kr}
\author{Philippe Tranchida}
\address{Philippe Tranchida, Universié Libre de Bruxelles, Département de Matématiques, C.P.216-Algèbre et Combinatoire, Boulevard du Triomphe, 1050 Bruxelles, Belgium }
\email{tranchida.philippe@gmail.com}
\date{\today}
\begin{document}
\keywords{Thurston norm, fibered faces, Teichm\"uller polynomial, polytopes.}
\maketitle

\begin{abstract}
We determine the Thurston unit ball of a family of $n$-chained link, denoted by $C(n,p)$, where $n$ is the number of link components and $p$ is the number of twists.
When $p$ is strictly positive, we prove that the Thurston unit ball for $C(n,p)$ is an $n$-dimensional cocube, for arbitrary $n$.
Moreover, we clarify the condition for which $C(n,p)$ is fibered and find at least one fibered face for any $p$.
Finally we provide the Teichm\"uller polynomial for the face of Thurston unit ball of $C(n, -2)$ with $n\geq 3$.
\end{abstract}

\section{Introduction}\label{sec:intro}

Let $M$ be a $3$-dimensional manifold. We will suppose that $M$ has tori boundaries for simplicity. 
In one of his many seminal works \cite{william1986norm}, W. Thurston introduced a notion of a semi-norm for the second homology vector spaces of $M$. 
More precisely, let $[a] \in H_2(M, \partial M ; \mathbb{Z})$ be an integral second homology class. 
Then $[a]$ can be represented by a disjoint union of properly embedded surfaces $S_i$.
The Thurston norm of $[a]$ is then defined to be
$$
x(a) := \min \{\sum_i \max \{0,-\chi(S_i) \}\}
$$
where the minimum is taken over all possible ways to represent $[a]$ as a disjoint union of properly embedded surfaces. 
If $M$ is irreducible and atoroidal, this then extends to a norm on $H_2(M, \mathbb{R})$.
We sometimes use $||\cdot||$ to denote the Thurston norm.
In the same paper, he proves that the unit ball with respect to that norm, that we will call Thurston unit ball, is always a polytope.
Even though this concept has had huge theoretical consequences, it seems that there are very few cases for which we know precisely what the Thurston unit ball is. 
An interesting question in that regard is the following.

\begin{que*}
    Which polytope can appear as a Thurston unit ball of some $3$-manifold?
\end{que*}

This question was already posed by Kitayama in \cite{kitayama2022survey}.
It has been generalized in terms of groups and their $1$st homology by Friedl, Lück and Tillmann \cite{friedl2016groups}.
In \cite{pacheco2019thurston}, Pacheco-Tallaj, Schreve and Vlamis find out the shape of the Thurston unit ball for tunnel number-one manifolds.
We refer \cite{kitayama2022survey} for more recent research of Thurston norm.

In this article we show that the Thurston unit ball can contain highly symmetric polytopes in arbitrary high dimensions.
We will be interested in determining the Thurston unit ball for a family of complements of links, denoted by $C(n,p)$. 
Briefly speaking, $C(n,p)$ is an $n$-chained link with $p$ positive half-twist on the first component if $p$ is positive or $p$ negative half-twist on the first component if $p$ is negative (see figure \ref{fig:C(n,p)ex}). 
The complements of these links are in some sense generalizations of the magic manifold, which is the complement of $C(3,0)$. 
The magic manifold and its properties are thus good examples to keep in mind.

In a previous article \cite{baik2022topological}, the two authors together with Harry Baik and Changsub Kim studied the relation between the minimal entropy of pseudo-anosov maps on a surface $S$ and the action of these maps on $H_1(S)$. 
In order to do so, the use of the complement of $C(n,-2)$ was crucial.

Here are the main results of this article.

\begin{thm}
Let $M(n,p)$ be the complement of the link $C(n,p)$ with $n \geq 3$ and $B(n,p)$ be the Thurston unit ball of $M(n,p)$. 
Suppose $M(n, p)$ is hyperbolic.
Then
\begin{itemize}
    \item If $p \ge 1$, $B(n,p)$ is an $n$-dimensional cocube with vertices \\
    $(\pm 1, 0, \cdots, 0), \cdots, (0, \cdots, 0, \pm 1)$. (Corollary \ref{cor:normballp})
    \item If $p = 0$, $B(n,p)$ is the union of an $n$-dimensional cocube and two simplices. (Theorem \ref{thm:normball0})
\end{itemize}
\end{thm}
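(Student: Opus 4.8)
The plan is to determine $B(n,p)$ by sandwiching the Thurston norm of $M(n,p)$ between an explicit upper bound coming from spanning surfaces and a lower bound coming from the multivariable Alexander polynomial, using hyperbolicity to guarantee that the resulting polytope really is the unit ball. First I would fix coordinates: since $C(n,p)$ has $n$ components, $H_1(M(n,p))\cong\Z^n$ is freely generated by the meridians $\mu_1,\dots,\mu_n$, and Poincar\'e--Lefschetz duality identifies $H_2(M(n,p),\partial M(n,p);\R)$ with $\R^n$ through the dual basis $e_1,\dots,e_n$. Hyperbolicity makes $M(n,p)$ irreducible, atoroidal and boundary--irreducible, so it carries no essential spheres, disks, annuli or tori; by Thurston's theorem $x$ is then an honest norm on $H_2(M(n,p),\partial M(n,p);\R)$, hence $B(n,p)$ is a compact centrally symmetric polytope and it is enough to decide which integral classes have norm $\le 1$.

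For the upper bound I would use, for each $i$, the obvious spanning surface of the $i$-th component of the chain: the disk it bounds in $S^3$ meets its two neighbours once each, so deleting neighbourhoods of those two intersection points leaves a properly embedded thrice--punctured sphere $P_i\subset M(n,p)$ with $-\chi(P_i)=1$, and in the meridian--dual coordinates $[P_i]=e_i$ (for the twisted first component one reads off from a diagram of $C(n,p)$ that the analogous surface is again a thrice--punctured sphere). Subadditivity of the seminorm then gives $x\big(\sum_i a_i e_i\big)\le\sum_i|a_i|$, so the cocube with vertices $\pm e_i$ is contained in $B(n,p)$ for every $p$. When $p=0$ one can do better: I would write down an explicit fibration of $M(n,0)$ whose fibre is a planar surface carrying the diagonal class $(1,\dots,1)$ with $-\chi=n-2$; then $(1,\dots,1)$ lies in the cone on a fibered face, $x$ is linear there, and $x(1,\dots,1)=n-2$, so $B(n,0)$ contains the two extra vertices $\pm\tfrac1{n-2}(1,\dots,1)$, which together with the cocube are exactly the vertices of the asserted union of a cocube with two simplices.

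The reverse inclusion is the crux. Here I would invoke McMullen's inequality $\|\phi\|_A\le\|\phi\|_T$ (valid since $b_1(M(n,p))=n\ge 2$), together with the fact that the Alexander norm ball is the polar dual of the difference body $N-N$ of the Newton polytope $N$ of $\Delta_{C(n,p)}(t_1,\dots,t_n)$. Everything then reduces to computing $N$, which I would carry out by Fox calculus on the very structured, nearly cyclic presentation of $\pi_1(M(n,p))$ (meridians as generators, one clasp relation per consecutive pair, with the $p$ half--twists altering a single relation), evaluating the determinant of the resulting near--circulant Alexander matrix in closed form; alternatively one presents $C(n,p)$ as $1/p$ surgery on an unknot clasping the first component of $C(n,0)$ and uses the Torres/surgery formula, with an induction on $n$. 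The expected outcome is that for $p\ge 1$ the polytope $N$ is a unit $n$--cube, so $N-N=[-1,1]^n$ and its polar dual is exactly the cocube of the statement, whereas for $p=0$ the polytope $N$ is a unit $n$--cube with its two antipodal vertices truncated, so $N-N$ is $[-1,1]^n$ with the corners $\pm(1,\dots,1)$ cut off along $\sum_i x_i=\pm(n-2)$, whose polar dual is the cocube with the apices $\pm\tfrac1{n-2}(1,\dots,1)$ adjoined. Combined with the upper bound this pins down $B(n,p)$ in both cases and identifies $B(n,0)$ with the Alexander norm ball; as a sanity check, for $p=0$ one may instead use that $C(n,0)$ is alternating, so its Thurston norm is governed by $\Delta$, or reduce to the known Thurston ball of the magic manifold $M(3,0)$ via the Dehn--filling relations within the family.

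I expect the main obstacle to be exactly this polytope computation: controlling $N(\Delta_{C(n,p)})$ uniformly in $n$, i.e.\ showing that it is neither larger nor smaller than claimed, and in particular tracking how the monomials $1$ and $t_1\cdots t_n$ switch on as $p$ moves away from $0$. A secondary nuisance will be the bookkeeping of normalisation and basepoint conventions in the Torres formula when a variable is specialised to $1$, and checking that the surfaces $P_i$ and the fibre used for $p=0$ really represent the asserted homology classes rather than integral combinations of them; the hypotheses $n\ge 3$ and hyperbolicity enter to exclude the small non--hyperbolic members of the family and to keep $x$ nondegenerate.
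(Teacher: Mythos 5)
Your architecture --- explicit embedded surfaces to show the candidate polytope is contained in $B(n,p)$, plus a norm-detecting lower bound for the reverse inclusion --- is coherent, but the reverse inclusion, which you correctly identify as the crux, is not actually proved. Everything rests on the claim that the Newton polytope of $\Delta_{C(n,p)}(t_1,\dots,t_n)$ is a unit $n$-cube (resp.\ a cube with two antipodal corners truncated) uniformly in $n$ and $p$, and you present this only as ``the expected outcome'' of a Fox-calculus or Torres-formula computation that is never carried out. Until that polytope is computed and shown to be no smaller than claimed, McMullen's inequality $\|\cdot\|_A\le\|\cdot\|_T$ yields nothing, and the risk of strict inequality is genuine here: for $p\ge 1$ only the two faces meeting $\pm(1,\dots,1)$ are fibered, so the automatic equality of the Alexander and Thurston norms on fibered cones does not cover the other $2^n-2$ facets of the cocube. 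A secondary slip: for $p=0$ the class $(1,\dots,1)$ is a \emph{vertex} of the unit ball, hence lies on the boundary of every adjacent fibered cone and is not itself a fibered class, so the ``explicit fibration with planar fibre in class $(1,\dots,1)$'' you propose does not exist; an embedded (non-fibre) planar Seifert surface does exist and suffices for the containment direction, but its existence still has to be exhibited.

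The paper avoids the Alexander polynomial entirely and gets the lower bound almost for free. For $p\ge 1$ it uses that $C(n,p)$ admits an alternating diagram for \emph{every} orientation $(\pm1,\dots,\pm1)$: Seifert's algorithm produces a genus-one surface with $n$ boundary components, and Gabai's theorem that Seifert surfaces of alternating diagrams realize the minimal genus gives the exact value $x(\pm1,\dots,\pm1)=n$ (Theorem \ref{thm:seifert_alg}). For $p=0$ it uses Leininger's explicit fibre in class $(1,\dots,1,-1)$, whose Euler characteristic determines the supporting hyperplane of its fibered face, and then the cyclic symmetry of the link. In both cases the proof closes with an elementary convexity observation: once all vertices of the candidate polytope lie in $B(n,p)$ and a relative-interior point of each facet (e.g.\ its barycenter) is known to have norm exactly $1$, each facet must lie in a supporting hyperplane of the ball, and since the facets assemble into a closed polytope the two bodies coincide. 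If you replace your Alexander-norm step by this alternating/Gabai input --- which you mention only as a ``sanity check'' for $p=0$, whereas it is precisely the tool that works for all $p\ge1$ --- and by the explicit fibre for $p=0$, your argument becomes complete and essentially coincides with the paper's.
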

Moreover, we also find a fibered face in each case, and determine the topological type of every fibered surface in that fibered face.

A complete answer for the case of $p < 0$ is out of our reach for now. We nonetheless find a set $V(n,p)$ of points in the Thurston unit ball and conjecture that their convex hull, denoted by $B(n,p)$ is the whole Thurston unit ball. This conjecture is partially supported by computational data, obtained using the program Tnorm and gathered in Appendix \ref{appendix:B}. 

\begin{conj}
$B(n,p)$ is equal to the Thurston unit norm ball of $C(n, p)$ when $p<0$.
\end{conj}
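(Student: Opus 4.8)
The plan is to establish the two inclusions $B(n,p)\subseteq B_x(M(n,p))$ and $B_x(M(n,p))\subseteq B(n,p)$ separately, where $B_x(M(n,p))$ denotes the genuine Thurston norm ball (restricting first to the range of $(n,p)$ for which $M(n,p)$ is hyperbolic, and treating the sporadic remaining cases by hand). For the inclusion $B(n,p)\subseteq B_x(M(n,p))$ it is enough, by convexity and homogeneity of $x$, to produce for each vertex $v$ of the convex hull $B(n,p)$ a properly embedded surface representing the associated primitive integral class $\phi_v$ with $\max\{0,-\chi\}$ equal to the reciprocal of the dilation that sends $v$ onto the claimed unit sphere. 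I would read these surfaces off the chained-link diagram: the classes $\pm e_i$ are carried by meridian disks and by the punctured spheres and annuli obtained by capping off sublinks of $C(n,p)$, as in the $p\ge 1$ case of Corollary \ref{cor:normballp}, while the remaining vertices of $V(n,p)$ — the ones genuinely sensitive to $p<0$ — should be realized by fiber surfaces coming from the fibered face already identified, propagated over their full orbit under the evident symmetries of $C(n,p)$ (cyclic rotation of the chain and the mirror symmetry), so that only one explicit surface per symmetry class need be analysed.

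The inclusion $B_x(M(n,p))\subseteq B(n,p)$ is the sharp lower bound $x(\phi)\ge \|\phi\|_{B(n,p)}$, and is the heart of the matter. I would first compute the multivariable Alexander polynomial $\Delta_{C(n,p)}(t_1,\dots,t_n)$: chained links admit a transfer-matrix (Burau-type) recursion in the number of components, and the $p$ half-twists on the first component amount to a controlled modification in the variable $t_1$, so $\Delta_{C(n,p)}$ should obey a clean recursion in both $n$ and $p$. By McMullen's inequality $\|\cdot\|_A\le x(\cdot)$, valid since $b_1(M(n,p))=n\ge 2$, if the Newton polytope of $\Delta_{C(n,p)}$ (suitably normalised) is polar-dual to $B(n,p)$, then combined with the first inclusion this finishes the proof: $B(n,p)\subseteq B_x(M(n,p))\subseteq B_A(M(n,p))=B(n,p)$. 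The main obstacle is that for $p<0$ the Alexander norm is very likely \emph{strictly} smaller than the Thurston norm off the fibered cone — which is precisely what keeps the statement a conjecture — so the Alexander bound alone will not be sharp on every face of $B(n,p)$.

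To close that gap I see two routes. The first is sutured-manifold theory in the sense of Gabai: show that each candidate norm-minimiser $S_v$ from the first part is taut by decomposing $M(n,p)$ along $S_v$ and exhibiting a terminating sutured manifold hierarchy, which certifies that $S_v$ minimises $-\chi$ in its class and hence pins down the norm on the cone over the corresponding face. The second, closer in spirit to the Tnorm computations gathered in Appendix \ref{appendix:B}, is an induction on $n$: Dehn filling one unknotted component of $C(n,p)$ produces $C(n-1,p)$ (or a chained link with adjusted twisting), and one controls the effect on $B_x$ using the facts that the Thurston norm never increases under Dehn filling and is preserved on the subspace of classes that extend across the filling, with the base cases $n=3$ and small $|p|$ supplied by the computations already tabulated. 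I expect the inductive step — controlling the entire face structure under filling, not merely individual norm values — to be the crux, and the reason a complete proof is presently out of reach.
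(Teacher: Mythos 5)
There is a genuine gap: what you have written is a research programme, not a proof, and the decisive half of the argument is missing. The statement is stated in the paper as a conjecture precisely because only one inclusion is known. Your first inclusion, $B(n,p)\subseteq B_x(M(n,p))$, essentially reproduces what the paper already establishes (Proposition \ref{prop:new_vertex_candidate} and Corollary \ref{cor:vertex_candidates}): the vertices of $V(n,p)$ are realized by explicit punctured spheres obtained from flips and full twists, these have the correct norm because a genus-zero representative with the boundary pattern forced by Lemma \ref{lem:gen_bdd_formula} cannot be improved, and convexity does the rest. That part is fine. The second inclusion, $B_x(M(n,p))\subseteq B(n,p)$, i.e.\ the lower bound $x(\phi)\ge\|\phi\|_{B(n,p)}$ for \emph{every} class $\phi$, is never established. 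Knowing that finitely many classes lie exactly on the unit sphere does not prevent the true unit ball from being strictly larger than their convex hull on the interiors of the proposed faces; one needs a norm lower bound valid on each full top-dimensional cone.

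Each of your three candidate routes for that lower bound fails as stated. You yourself concede that McMullen's Alexander-norm inequality is expected to be non-sharp off the fibered cones for $p<0$, so it cannot certify the non-fibered faces. The sutured-manifold route would work in principle, but only if you exhibit a taut decomposition adapted to each top-dimensional face (not merely to each vertex class), and no such hierarchy is constructed or even described concretely; certifying individual surfaces $S_v$ as taut again only pins down the norm at the vertices. The Dehn-filling induction runs into the problem you identify: filling one component only controls the norm on the codimension-one slices $x_i=0$ (this is exactly the content of Proposition \ref{prop:property_of_B(n,p)}), and a norm that is correct on every coordinate hyperplane slice is not thereby determined on the interior of the cone. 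Since no mechanism is supplied to bridge from slices or vertices to full faces, the conjecture remains open under your proposal, exactly as it does in the paper.
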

For the case of $p = -2$, we also managed to compute the Teichmuller polynomial for every value of $n$. 

\begin{thm} (Theorem \ref{thm:teichpoly})
    The Teichm\"{u}ller polynomial $P$ for the fibered face $\mathcal{F}$ is
    \[
        P(x_1, \cdots, x_{n-1}, u) := A - \sum_{k = 1}^n ua_kA_k
    \]
     where $a_1 = 1, a_2 = x_1^{-1}, \cdots, a_n = (x_1\cdots x_{n-1})^{-1}$, $A := (a_1 - u)\cdots(a_n - u)$ and $A_k = \dfrac{A}{(a_k - u)(a_{k-1}-u)}$, subscript $k \equiv (mod~n) + 1$.
\end{thm}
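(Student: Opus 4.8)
The plan is to use McMullen's original definition of the Teichmüller polynomial via a suitable invariant of the monodromy action, specialized to the explicit fibered structure we have already extracted for $M(n,-2)$. Concretely, McMullen expresses $P$ as a characteristic-polynomial-type invariant of the action of the monodromy on a module over the group ring $\Z[H]$, where $H = H_1(M(n,-2))/\torsion$. Since in the previous sections we have (by assumption, from the earlier part of the excerpt) an explicit fibered face $\mathcal F$, an explicit fibered surface $S$ in that face, and an explicit pseudo-Anosov monodromy $\varphi$, the first step is to pin down a train track $\tau$ on $S$ carrying the invariant foliation of $\varphi$, or equivalently a Markov partition, together with the induced incidence matrix. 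The symmetry of $C(n,-2)$ under cyclic rotation of the $n$ components is the structural fact that makes this tractable: the train track can be chosen $\bZ/n$-equivariantly, so its transition matrix is "circulant up to the $H$-weights," which is exactly the shape reflected in the stated formula (the index $k \equiv (\mathrm{mod}~n)+1$ and the cyclic product $A$).

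The key steps, in order, are: (1) fix generators of $H$ dual to the link components — these become the variables $x_1,\dots,x_{n-1}$ — together with the variable $u$ dual to the fibered class, so that a loop winding around the $k$-th component contributes the monomial $a_k$ as defined in the statement; (2) write down the $\bZ/n$-equivariant train track $\tau\subset S$ and lift it to the infinite cyclic (indeed $\Z[H]$-) cover, recording for each branch the group-ring element by which $\varphi$ pushes it forward; (3) assemble the resulting $n\times n$ matrix $M(x_1,\dots,x_{n-1},u)$ over $\Z[H]$ — it will have diagonal-type entries $a_k - u$ coming from the "stay on component $k$" transitions and off-diagonal entries coming from the single way the monodromy carries branch $k$ to branch $k+1$; (4) compute $\det(M)$ and recognize it, after clearing units (Teichmüller polynomials are only defined up to units $\pm x^{\pm}$), as $A - \sum_{k=1}^n u a_k A_k$; (5) finally, invoke McMullen's theorem that this determinant, taken on the train-track module, equals the Teichmüller polynomial of the face $\mathcal F$, using that our $\tau$ fills $S$ and is invariant so that the module it defines is the correct one.

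The main obstacle I expect is step (2)–(3): correctly bookkeeping the $\Z[H]$-weights on the branches of the train track, i.e.\ making sure each branch is assigned the homology class of the correct path in the right cover, and that the equivariant structure is compatible with the chosen basis of $H$. A subtle point is that the monodromy of a fibered face is only canonical up to isotopy and the identification of $H_1$ of the fiber inside $H_1$ of the manifold requires care with orientations and with the meridians/longitudes of the twisted first component (the $p=-2$ twisting is precisely what deforms the otherwise-symmetric picture, and it must enter the weights correctly). Once the matrix is correct, the determinant expansion is a routine cofactor computation along the cyclic structure — expanding $\det(M)$ along the "$+1$" off-diagonal gives the single extra term $-\sum_k u a_k A_k$ beyond the product $A=\prod_k(a_k-u)$ — and the identification with $P$ then follows directly from McMullen's characterization, together with a sanity check that specializing to the fiber direction recovers the Alexander polynomial up to the known normalization and that $P$ has the correct symmetry and Newton polytope dual to $\mathcal F$.
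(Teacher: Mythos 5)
Your high-level plan coincides with the paper's: both compute $\theta_{\mathcal F}$ as McMullen's quotient $\det(uI-P_E(x))/\det(uI-P_V(x))$ for an invariant track on the fiber, exploiting the cyclic symmetry of $C(n,-2)$ and the fact that the monodromy is an explicit product of Dehn twists coming from the Murasugi decomposition into one horizontal and $n$ vertical Hopf bands. The genuine gap is in your steps (3)--(4): the matrix you predict, and the determinant expansion you propose for it, cannot produce the stated polynomial. First, the edge module is $2n$-dimensional, not $n$-dimensional: the paper's transition matrix is $T_VT_H$ with $T_V=\left[\begin{smallmatrix}D_s&0\\ D&D\end{smallmatrix}\right]$ and $T_H=\left[\begin{smallmatrix}I&\mathbf 1\\ 0&I\end{smallmatrix}\right]$, where $D=\operatorname{diag}(a_1,\dots,a_n)$, and one only reaches degree $n$ in $u$ after dividing by the vertex polynomial $\det(D-uI)=A$. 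Second, and more importantly, the core of the horizontal Hopf band meets \emph{all} $n$ vertical bands, so after block reduction the relevant $n\times n$ matrix is $\operatorname{diag}(B_k)-uD\mathbf 1$ with $B_k=(a_{k-1}-u)(a_k-u)$: every off-diagonal entry of row $i$ equals $-ua_i$. This is a rank-one perturbation of a diagonal matrix, not the ``diagonal plus a single cyclic off-diagonal'' circulant you describe.

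The distinction is not cosmetic. Expanding the determinant of a matrix with diagonal $a_k-u$ and a single nonzero cyclic off-diagonal yields $A$ plus \emph{one} extra monomial (the unique $n$-cycle in the permutation expansion), whereas the target $A-\sum_{k=1}^n ua_kA_k$ consists of $n$ correction terms, each omitting the two \emph{consecutive} factors $(a_{k-1}-u)(a_k-u)$ from $A$. The correct computation is a matrix-determinant-lemma argument (equivalently, the permutation expansion with cancellations carried out in the paper's Appendix A):
\[
\det\bigl(\operatorname{diag}(B_k)-uD\mathbf 1\bigr)=\prod_k B_k\Bigl(1-u\sum_k a_k/B_k\Bigr)=A\Bigl(A-\sum_k ua_kA_k\Bigr),
\]
and dividing by $\det(D-uI)$ gives the theorem. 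So your plan needs its combinatorial heart repaired: the track must record the interaction of the horizontal twist with every vertical band, and the final determinant is a diagonal-plus-rank-one computation rather than a cyclic cofactor expansion.
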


\subsection*{Acknowledgement}

We would like to thank Harry Hyungryul Baik, Chenxi Wu, Dan Margalit, JungHwan Park and Livio Liechti for useful conversations. We also thank William Worden for helping us setting up and using the Tnorm package.

\section{Preliminary}\label{sec:prelim}

We gather here the essential tools that will be used in the rest of the paper.

\subsection{Murasugi sums}
In his papers \cite{gabai1985murasugi}, \cite{gabai1986genera}, 
David Gabai proved theorems related to the fiberedness of embedded surfaces and about their monodromy map when they are in fact fibered. He constantly makes use of a geometric operation called ``Murasugi sum", which is a way to glue surfaces together while preserving some property of their fibers. 
We begin with the definition of this Murasugi sum.
\begin{defn}[Murasugi sum, \cite{gabai1985murasugi}]
    The oriented surface $\Sigma \subset S^3$ is a \emph{Murasugi sum} of two different oriented surface $\Sigma_1$ and $\Sigma_2$ if 
    \begin{enumerate}
        \item $\Sigma = \Sigma_1 \cup \Sigma_2$ and $\Sigma_1 \cap \Sigma_2 = D$, where $D$ is a $2n$-gon.
        \item There is a partition of $S^3$ into two $3$-balls $B_1, B_2$ satisfying that 
        \begin{itemize}
            \item $\Sigma_i \subset B_i$ for $i = 1,2$.
            \item $B_1\cap B_2 = S^2$ and $\Sigma_i \cap S^2 = D$ for $i = 1,2$.
        \end{itemize}
    \end{enumerate} 
\end{defn}
In simple terms, the Murasugi sum is a way to cut-and-paste two surfaces in an alternating way so that, around the gluing region, it looks like there are $2n$ legs going up and down alternatively.

\begin{figure}
	\centering
	\includegraphics[width = 0.8\textwidth]{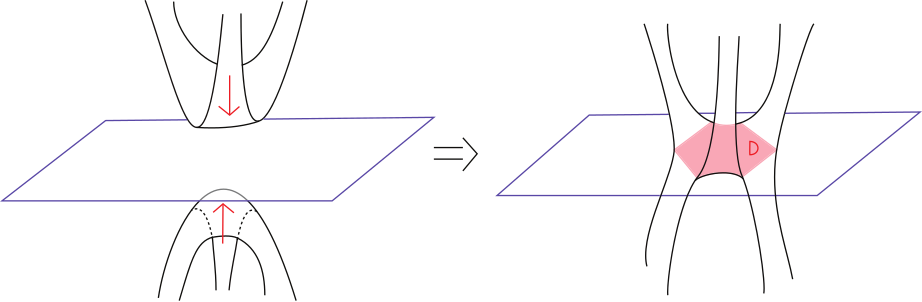}
	\caption{Murasugi sum of two surfaces, here $D$ is a hexagon}
	\label{fig:murasugi_sum}
\end{figure}

The power of the Murasugi sum is that it preserves the fiberedness and also the monodromies. More precisely, Gabai proved the two following theorems.

\begin{thm}[Gabai, \cite{gabai1983murasugi}]
    Let $S$ be a Murasugi sum of $S_1$ and $S_2$.
    Then $S$ is a fibered surface if and only if both $S_1$ and $S_2$ are fibered surfaces.
\end{thm}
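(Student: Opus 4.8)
The plan is to recast fiberedness in the language of sutured manifolds, following Gabai's own machinery, so that the equivalence becomes a statement about product sutured manifolds being both preserved and reflected by a single disk decomposition. Recall that an oriented Seifert surface $R$ for a link $L = \partial R$ is a fiber surface precisely when the complementary sutured manifold $(M,\gamma)$, obtained by setting $M = S^3 \setminus N(R)$ with annular sutures $\gamma$ running along $\partial R$, is a product sutured manifold $R \times [0,1]$ with $\gamma = \partial R \times [0,1]$. I would first reduce the theorem to the assertion that the complementary sutured manifold of $S = S_1 *_D S_2$ is a product if and only if the complementary sutured manifolds $(M_1,\gamma_1)$ of $S_1$ and $(M_2,\gamma_2)$ of $S_2$ both are.

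First I would analyze the splitting sphere. Since the Murasugi sum is supported by $S^3 = B_1 \cup_{S^2} B_2$ with $S_i \subset B_i$ and $S_i \cap S^2 = D$ the $2n$-gon, the sphere $S^2$ meets the complement $M$ in a disk $D'$, namely the closure of $S^2 \setminus N(D)$. The core of the argument is to check that $D'$ is a decomposing disk adapted to the sutured structure, whose boundary meets $\gamma$ in the alternating pattern dictated by the up/down legs of the $2n$-gon, and that the sutured manifold decomposition of $(M,\gamma)$ along $D'$ returns exactly $(M_1,\gamma_1) \sqcup (M_2,\gamma_2)$.

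With that identification in hand, both directions follow from how product structures interact with this decomposition. For the reverse implication, I would glue the two products $R_i \times [0,1]$ back along $D'$ and observe that the result is again a product, so $(M,\gamma) \cong S \times [0,1]$; concretely, this recovers Stallings' description of the monodromy of $S$ as the composition $h_1 \circ h_2$ of the monodromies of the summands, each extended by the identity over the complementary piece. For the forward implication I would use that decomposing a product sutured manifold along such a disk yields product pieces, forcing each $(M_i,\gamma_i)$ to be a product and hence each $S_i$ to be a fiber.

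The main obstacle I anticipate lies entirely in the bookkeeping of the second step: verifying that $D'$ is a genuine decomposing disk for the sutured structure and that cutting along it reproduces precisely the summands' complementary sutured manifolds, rather than some disk-summed variant, requires tracking the corners of the $2n$-gon and the induced sutures carefully. Once that correspondence is pinned down, the remainder reduces to the two elementary facts that a product sutured manifold stays a product when cut along $D'$ and that products glued along $D'$ stay products.
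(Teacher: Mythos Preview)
The paper does not prove this theorem; it is quoted from Gabai's original work \cite{gabai1983murasugi} as background in the preliminaries and is used as a black box throughout. There is therefore no ``paper's own proof'' to compare against.

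That said, your sketch is a faithful outline of Gabai's actual argument: the translation of fiberedness into the complementary sutured manifold being a product, the identification of the disk $D' = \overline{S^2 \setminus N(D)}$ as a product decomposing disk, and the observation that the sutured decomposition along $D'$ yields $(M_1,\gamma_1)\sqcup(M_2,\gamma_2)$ are exactly the steps Gabai carries out. One small correction: the monodromy of the sum is $h_2\circ h_1$ (or $h_1\circ h_2$ depending on conventions) only after extending each $h_i$ by the identity, but the order matters and is dictated by which side of $S^2$ carries $R_+$ versus $R_-$; the paper records this as the companion result \cite[Cor.~1.4]{gabai1985murasugi}. The step you flag as the main obstacle---that a product sutured manifold cut along a product disk remains a product---is indeed the content of the forward implication and is where Gabai's sutured manifold theory does real work; your sketch correctly isolates it but does not supply the argument, so as written this is an outline rather than a proof.
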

\begin{thm}[\cite{gabai1985murasugi}, Cor 1.4]
    Suppose that $R$ is a Murasugi sum of $R_1, R_2$ with $\partial R_i = L_i$, where $L_i$ is a fibered link with monodromy $f_i$ fixing pointwise the boundary $\partial R_i$, resp.
    Then $L = L_1\cup L_2$ is a fibered link with fiber $R$ and its monodromy map is $f = f_2'\circ f_1$ where $f_i'$ is the map induced on $R$ by inclusion.
\end{thm}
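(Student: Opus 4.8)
The plan is to take the fiberedness of $L = L_1 \cup L_2$ as already established by the preceding theorem, so that the entire remaining content is the identification of the monodromy. I would realize the monodromy as the first-return map of a suspension flow: since $R$ is a fiber, $S^3 \setminus L$ admits a fibration $\theta$ over $S^1$ with fiber $R = \theta^{-1}(0)$, and for any nowhere-zero vector field $\xi$ transverse to every fiber, the monodromy $f$ is the first-return map to $R$ along the flow of $\xi$. Because the monodromy is only well-defined up to isotopy, I have the freedom to choose $\xi$ adapted to the Murasugi decomposition $S^3 = B_1 \cup_{S^2} B_2$, and it is this adapted choice that makes the factorization visible.

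First I would position the two fibrations of $S^3 \setminus L_1$ and $S^3 \setminus L_2$ so that $R_i = \theta_i^{-1}(0)$ and so that each transverse field $\xi_i$ is in standard form near the separating sphere $S^2$ and near the gluing polygon $D = R_1 \cap R_2$. The core geometric claim is that these two flows can be spliced into a single transverse field $\xi$ on $S^3 \setminus L$: inside $B_1$ it agrees with $\xi_1$, inside $B_2$ with $\xi_2$, and in a collar of $S^2$ the two are interpolated so that $\xi$ remains transverse to a global fibration. One then arranges $\xi$ so that a point $x \in R$ is carried once through $B_1$ and once through $B_2$ before returning to $R$; the passage through $B_i$ reproduces the monodromy $f_i$ on the $R_i$-part of $R$ while leaving the complementary part fixed up to isotopy, which is exactly the inclusion-induced map $f_i'$. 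Composing the two passages in the order $B_1$ then $B_2$ yields $f = f_2' \circ f_1'$, the map written $f_1$ in the statement being understood as its extension $f_1'$ to $R$.

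The main obstacle is the splicing step, together with the careful treatment of the induced maps near $D$ and near the binding $L$. Since $f_i$ fixes $\partial R_i$ pointwise but need not fix the interior polygon $D$, extending $f_i$ over $R$ by the identity on the other piece is not automatically continuous; I would resolve this by isotoping $f_i$ to be supported away from a collar of $D$, and by arranging the collar of $S^2$ so that flow lines cross it transversally in a product fashion, so that the first-return contribution of $B_i$ genuinely factors as a homeomorphism of $R$ supported on the $R_i$-side. Verifying that the spliced field stays transverse to a fibration—equivalently, that the complement cut along $R$ is the product $R \times [0,1]$ with its two banks identified by $f_2' \circ f_1'$—is where Gabai's sutured-manifold machinery enters: in the sketch I would phrase this as gluing the product sutured manifolds obtained by cutting $S^3$ along each $R_i$ along the product region determined by $D$, and reading off the shear of the resulting product as the composite monodromy.
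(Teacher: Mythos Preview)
The paper does not give its own proof of this statement: it appears in the preliminaries section as a quoted result from Gabai (\cite{gabai1985murasugi}, Cor.~1.4), stated without argument. There is therefore nothing in the paper to compare your proposal against.

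For what it is worth, your sketch follows the standard line of Gabai's original argument: realize the monodromy as the first-return map of a transverse flow, splice the two fibration flows across the separating sphere $S^2$, and read off the factorization $f_2'\circ f_1'$ from the two successive passages through $B_1$ and $B_2$. The technical issues you flag---arranging $f_i$ to be supported away from a collar of $D$ so that the extension by the identity is continuous, and verifying that the spliced field remains transverse to a global fibration---are exactly the points that need care, and your proposed resolution via product sutured pieces is in the right spirit.
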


Using two theorems it is possible to construct fibered surfaces by gluing together smaller fibered surfaces while having a nice control on the monodromy maps. 
A good starting block for this construction is the Hopf link, which consists of $2$ circles linked together exactly once.
The Hopf band is then a Seifert surface of the Hopf link. 
It is thus a fibered surface of $S^3 - \{\text{Hopf link}\}$.
\begin{lem}[Monodromy of a Hopf band]
    The Hopf band is a fibered surface.
    Moreover, the monodromy of the positive (resp. negative) Hopf band is the right-handed (resp. left-handed) Dehn twist along its core curve.
\end{lem}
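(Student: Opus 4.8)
The plan is to exhibit an explicit fibration of the Hopf link complement, recognize its fiber as the Hopf band, and then identify the monodromy inside $\Mod(H_+,\partial H_+)$, where it is forced to be a power of the core Dehn twist. First I would fix a model: view $S^3$ as the unit sphere in $\C^2$ and let $f\colon\C^2\to\C$, $f(z_1,z_2)=z_1z_2$, so that $V=f^{-1}(0)\cap S^3=(\{z_1=0\}\cup\{z_2=0\})\cap S^3$ is precisely the positive Hopf link $L_+$. By Milnor's fibration theorem $\varphi=f/|f|\colon S^3\setminus V\to S^1$ is a locally trivial fibration whose closed fiber $F=\overline{\varphi^{-1}(1)}$ is a compact oriented surface with $\partial F=L_+$; parametrizing $\varphi^{-1}(1)$ by $z_1=re^{i\theta}$, $z_2=se^{-i\theta}$ with $r^2+s^2=1$, $r,s>0$, $\theta\in S^1$, shows $F$ is an annulus, and comparing the framing its core induces with the Seifert framing shows $F$ is embedded with one positive full twist, i.e. $F=H_+$. (One can also avoid Milnor's theorem: writing $S^3=U_1\cup_{T^2}U_2$ as two solid tori with $L_+$ the union of their cores, so $S^3\setminus L_+\cong T^2\times(0,1)$, the map recording the $(1,1)$-slope coordinate on each $T^2\times\{t\}$ is a fibration whose fibers are annuli limiting onto the cores.) Hence $H_+$ is fibered; since $H_-$ is the mirror image of $H_+$, its fiberedness and monodromy follow by reflection, and below I treat only $H_+$.

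Next I would pin down the monodromy $h\colon H_+\to H_+$. After an isotopy we may assume $h$ fixes $\partial H_+$ pointwise, so $[h]\in\Mod(H_+,\partial H_+)\cong\Z$, which is generated by the right-handed Dehn twist $\tau_c$ about the core $c$; thus $[h]=\tau_c^{k}$ and everything reduces to computing $k\in\Z$ and its sign. I would fix $k$ by a capping-off argument: the mapping torus of $(H_+,h)$ with its two boundary tori filled by solid tori is $S^3$, whereas the analogous capping of $(\text{annulus},\tau_c^{k})$ yields $S^3$ only for $k=\pm1$ (the two cases corresponding to $\pm1$-surgery on an unknot), and the positive twisting of the band selects $k=+1$. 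I would cross-check this directly: lifting the loop $t\mapsto e^{2\pi it}$ through $\varphi$ by the circle action $(z_1,z_2)\mapsto(e^{i\pi t}z_1,e^{i\pi t}z_2)$, which covers it, produces the time-$1$ map $(z_1,z_2)\mapsto(-z_1,-z_2)$, a free half-rotation of $H_+$; this is isotopic to the identity only if one forgets the boundary, and restoring the boundary pointwise forces exactly one positive Dehn twist along $c$. Either way $h\simeq\tau_c$ rel $\partial H_+$.

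The step I expect to be the main obstacle is precisely this last identification: matching the three notions of sign that enter --- ``positive full twist in the embedded band'', ``right-handed Dehn twist along the core'', and ``$+1$ surgery/framing coefficient'' --- while remembering that $\tau_c$ lives in $\Mod(H_+,\partial H_+)$ and is invisible to any computation that forgets $\partial H_+$ (here both the homological monodromy and the free isotopy class of the geometric monodromy are trivial). Once a consistent set of orientation conventions is fixed this is routine, but it is the only point where genuine care is needed; the fiberedness and the identification of the fiber with $H_+$ are immediate from the picture above.
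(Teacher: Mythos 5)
The paper states this lemma as a classical fact in its preliminaries and supplies no proof, so there is no in-paper argument to compare yours against; what you propose is the standard and correct route. The Milnor fibration of $z_1z_2$ does exhibit the Hopf link as fibered with annular fiber, and the reduction of the monodromy to an integer $k$ via $\Mod(H_+,\partial H_+)\cong\Z=\langle \tau_c\rangle$ is exactly right. The one place your sketch is thin is the one you flag yourself: the time-one map of the lifted circle action, $(z_1,z_2)\mapsto(-z_1,-z_2)$, is only a free half-rotation of the annulus, and the integer $k$ is invisible until you compare the rotation induced on each boundary circle with the page framing coming from the local model of the fibration near the binding (near $z_2=0$ one has $f\approx u\cdot z_2$ with $u$ a unit, so the pages wind once around that binding component); the two boundary contributions then combine to exactly one full Dehn twist about the core, with handedness pinned down by the orientation of $S^3$ and of the pages rather than ``selected'' by convention. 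If you carry out that local computation, the capping-off argument (which only isolates $k=\pm1$) becomes a consistency check rather than part of the proof. With that step made explicit the argument is complete, and the negative Hopf band follows by mirroring as you say.
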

In fact, in \cite{giroux2006stable} Giroux and Goodman proved that every fibered link in $S^3$ is obtained from the unknot, by Murasugi summing or desumming along Hopf bands.
\emph{i.e.,} The Hopf bands are building blocks to construct any fibered link in $S^3$.

\subsection{Fibers of alternating knots/links}
Suppose a fibered link $L$ in $S^3$ is given.
In general it is really hard to detect what is the fiber of $S^3 - L$.
However, if $L$ is alternating, Seifert showed in \cite{seifert1935geschlecht} how to construct the fiber. 
We first recall the definition of an alternating link and then explain the Seifert algorithm.
For more details, we refer \cite{rolfsen2003knots}.

\begin{defn}[Alternating link]
    Let $L$ be an oriented link.
    An alternating diagram for $L$ is a link diagram such that the crossings alternate under and over as one travels along each component of the link. 
    A link is alternating if it admits an alternating diagram.
\end{defn}

\begin{defn}[Seifert algorithm]
    Let $L$ is an oriented link.
    The Seifert algorithm can be described as follows.
    \begin{enumerate}
        \item At each crossing, cut at the crossing and paste back in such a way that, near the crossing, there are $2$ components, as showed in figure \ref{fig:cutPaste}.
        
        \begin{figure}[h] 
            \centering
            \includegraphics[width=.5\textwidth]{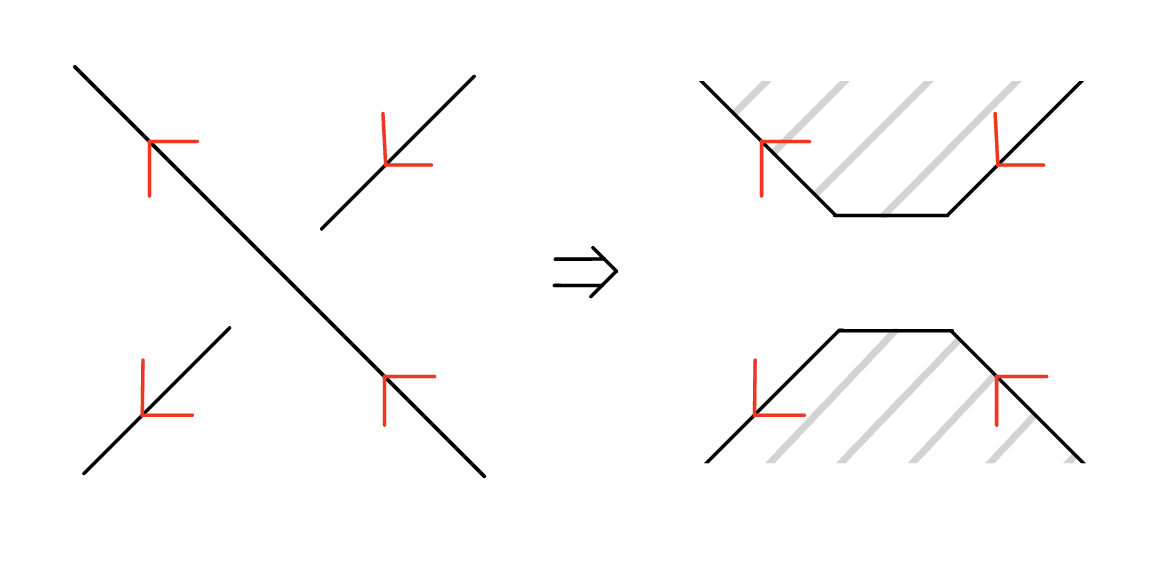}
            \caption{Cut and paste at a crossing in Seifeirt algorithm}
            \label{fig:cutPaste}
        \end{figure}        
        \item After all these cut-and-paste operations, we are left with a disjoint collection of oriented simple closed curves.
        Each curve bounds a disk, unless two or more curves are nested.
        In that case, we can consider the innermost to be lying slightly above the others and repeat this process until there are no more nested curves.
        We then assign to each region "+" sign if the region is on the left side of the boundary curve, with respect to its orientation, or "-" sign otherwise.
        Note that the result is sometimes called a checkerboard coloring.
        \item Finally, we reconnect these discs at each crossing with a twisted strip, the direction of the twist being determined by the direction of the original crossing.
    \end{enumerate}
    The result of this algorithm is a surface $S$ whose (oriented) boundary is $L$.
\end{defn}
The surface obtained from the Seifert algorithm is called the \textit{Seifert surface} of $L$.
The \textit{genus} of a link $L$ is defined to be the minimal genus of a surface in the complement of $L$ whose boundary is $L$.
In \cite{gabai1986genera} Gabai proved that if $L$ is alternating, then the genus of $L$ is equal to the genus of the Seifert surface of $L$.
\begin{thm}[\cite{gabai1986genera}, Thm 4]
    Let $L$ be an oriented link in $S^3$.
    If $S$ is a surface obtained by applying Seifert's algorithm to an alternating diagram of $L$, then $S$ is a surface of minimal genus.
\end{thm}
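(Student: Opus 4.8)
The plan is to prove the statement through Gabai's theory of sutured manifolds, which is the natural machinery for certifying that a surface minimizes genus. The first point is to recast ``minimal genus'' homologically: the Seifert surface $S$ is of minimal genus precisely when $-\chi(S)$ realizes the Thurston norm of the class $[S] \in H_2(S^3\setminus N(L),\, \partial(S^3\setminus N(L)))$, where $N(L)$ is an open tubular neighbourhood of $L$. Gabai's fundamental tautness theorem supplies a certificate for exactly this: if the sutured manifold $(M,\gamma)$ obtained by decomposing the exterior $S^3\setminus N(L)$ along $S$ is \emph{taut} --- meaning $M$ is irreducible and the two copies $R_\pm(\gamma)$ of $S$ appearing in $\partial M$ are themselves norm-minimizing --- then $S$ is norm-minimizing in the exterior. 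Thus the entire problem reduces to showing that cutting along the \emph{alternating} Seifert surface yields a taut sutured manifold.

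First I would reduce to the clean case. A nugatory (removable) crossing can be undone without changing $L$, so I may assume the diagram is reduced; and a connected-sum or split decomposition of the diagram corresponds to a Murasugi sum of the associated Seifert surfaces. Since minimal genus is preserved under Murasugi summing (the genus analogue of the Murasugi-sum theorems of Gabai quoted above), it suffices to treat a reduced, prime, connected alternating diagram, for which $\chi(S) = s - c$ with $s$ the number of Seifert circles and $c$ the number of crossings.

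The heart of the argument is then a sutured manifold hierarchy adapted to the alternating structure. The checkerboard coloring produced by Seifert's algorithm organizes the complementary regions of the diagram into a family of embedded product disks (and annuli) in $(M,\gamma)$, and I would use these as decomposing surfaces. The alternating hypothesis is what makes the signs along the sutures consistent, so that each such disk meets $\gamma$ in a \emph{well-groomed} pattern; decomposing along it is then a product decomposition that strictly lowers a complexity (for instance the crossing number, or the rank of $H_1$). After finitely many steps the sutured manifold becomes a disjoint union of product pieces $D^2\times I$, which are visibly taut. Gabai's theorem guarantees that tautness is inherited backwards along well-groomed decompositions, so running the hierarchy in reverse proves $(M,\gamma)$ taut, whence $S$ is of minimal genus.

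The main obstacle will be verifying the two inductive conditions simultaneously at every stage: that the decomposing surfaces coming from the alternating diagram really are well-groomed product decompositions (so tautness propagates), and that some complexity strictly decreases so that the hierarchy terminates. The alternating condition is precisely the input that prevents any non-taut piece --- a compressible $R_\pm(\gamma)$ or a non-norm-minimizing component --- from appearing, but converting this heuristic into a rigorous terminating induction is the technical core. As an independent cross-check in the knot case, I would confirm the resulting value $g(S) = (c - s + 1)/2$ against the lower bound coming from the degree of the Alexander polynomial, which is sharp for alternating knots because the Seifert form of the alternating surface has a non-degenerate leading term (Crowell--Murasugi).
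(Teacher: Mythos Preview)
The paper does not supply a proof of this statement at all: it is quoted in the preliminaries as a background result, attributed to Gabai (\cite{gabai1986genera}, Theorem~4), and used as a black box thereafter. There is thus no ``paper's own proof'' to compare against.

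That said, your outline is essentially the strategy Gabai himself employs in the cited reference: build a sutured manifold hierarchy for the complement cut along the Seifert surface, use product decompositions coming from the diagrammatic structure to reduce complexity, and invoke the backwards-inheritance of tautness to conclude norm-minimality. Your identification of the technical core --- verifying that the decomposing disks are well-groomed and that a suitable complexity strictly drops --- is accurate, and your cross-check via the Alexander polynomial degree (Crowell--Murasugi) is the classical confirmation in the knot case. So the proposal is a faithful sketch of the original argument, but it is not something the present paper undertakes; for the purposes of this paper you would simply cite Gabai and move on.
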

This condition is closely related to the fiberedness.
\begin{thm}[Theorem 4.1.10 in \cite{kawauchi1996survey}]
    Let $S$ be a Seifert surface for a fibered link $L$.
    Then the following are equivalent.
    \begin{enumerate}
        \item $S$ attains the minimal Seifert genus.
        \item $S$ is a fibered surface.
    \end{enumerate}
\end{thm}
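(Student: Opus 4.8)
The statement is an \emph{if and only if}, so I would prove the two implications separately; the substantive direction is that minimal genus forces $S$ to be the fiber. For the \emph{fiber $\Rightarrow$ minimal genus} direction, the plan is to invoke Thurston's theorem that a fiber of a fibration $M\to S^1$ is norm-minimizing in its homology class \cite{william1986norm}: since any two connected Seifert surfaces for $L$ represent the same class in $H_2(M,\partial M;\mathbb Z)$ and have the same number of boundary components, the norm-minimizing one has the largest Euler characteristic, i.e.\ the smallest genus. If a self-contained argument is preferred, I would work in the infinite cyclic cover $\widetilde M\cong F\times\mathbb R$ dual to the fibration class: every loop on a Seifert surface $S$ has zero linking number with $L$, so $S$ lifts, each lift is homologous (rel ends) to a fiber $F\times\{c\}$, and hence the projection $F\times\mathbb R\to F$ restricts to a degree-one map $S\to F$; a degree-one map of compact surfaces is $\pi_1$-surjective, so $-\chi(S)\ge-\chi(F)$, which is the claim.

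For the \emph{minimal genus $\Rightarrow$ fiber} direction I would argue geometrically. By the previous step the fiber $F$ is itself a minimal genus Seifert surface, so $\chi(S)=\chi(F)$ and both $S$ and $F$ are norm-minimizing representatives of the same class in $H_2(M,\partial M;\mathbb Z)$; the degenerate cases where the surfaces are disks (genus-zero fibered links) I would dispose of by hand, so assume $S$ and $F$ are incompressible. First I would put $S$ and $F$ in general position and examine $S\cap F$, a disjoint union of circles. Then, using irreducibility of $M$ together with incompressibility and norm-minimality of both surfaces, I would run a double-curve-sum / innermost-disk exchange in the spirit of \cite{william1986norm} to remove every intersection circle without raising $-\chi(S)$; after this isotopy $S\cap F=\emptyset$. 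Finally $S$ then sits inside $M$ cut along $F$, which is the product $F\times I$, as an incompressible surface in the class of $F\times\{\mathrm{pt}\}$ with boundary on $\partial F\times I$; by the classification of incompressible surfaces in a product, $S$ is isotopic to $F\times\{\mathrm{pt}\}$ and is therefore a fiber surface.

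The step I expect to be the main obstacle is the isotopy making $S$ disjoint from $F$: one must keep the Euler characteristic under control through the cut-and-paste and make sure no new compressions are introduced, which is exactly the technical core of the Thurston-norm / taut-surface theory. An alternative I would keep in reserve is Gabai's sutured-manifold machinery \cite{gabai1983murasugi,gabai1985murasugi}: minimal genus implies that $M$ cut along $S$ is a taut sutured manifold, and a taut sutured manifold realized inside the fibered (hence product) picture must itself be a product $S\times I$; equivalently, via Stallings' fibration criterion, I would check that $\pi_1(S)\hookrightarrow\pi_1(M|S)$ is onto, using that the fibered hypothesis makes $\pi_1(\widetilde M)=\pi_1(F)$, and hence the bi-infinite amalgam of copies of $\pi_1(M|S)$ over $\pi_1(S)$, finitely generated, which forces the amalgamating inclusions to be isomorphisms.
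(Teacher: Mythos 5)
This statement is not proved in the paper: it is quoted verbatim as Theorem 4.1.10 of \cite{kawauchi1996survey}, so there is no internal proof to compare yours against, and I will only assess your argument. The direction \emph{fiber $\Rightarrow$ minimal genus} is fine as you present it; either Thurston's norm-minimization of fibration classes \cite{william1986norm} or the infinite cyclic cover argument works. For the direction \emph{minimal genus $\Rightarrow$ fiber}, your primary route has a real gap exactly where you suspect it: two minimal-genus Seifert surfaces of the same link cannot in general be made disjoint by an isotopy (this failure is precisely what makes the Kakimizu complex nontrivial), and the innermost-disk/double-curve-sum exchanges you invoke produce a \emph{new} norm-minimizing surface rather than an isotopy of $S$, so at the end you would have lost track of which surface is sitting inside $F\times I$. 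That route should be dropped. Your ``reserve'' argument, on the other hand, is the standard and correct proof, and is essentially the one in the cited reference: minimal genus forces $S$ to be incompressible; $S$ lifts to the infinite cyclic cover $\widetilde M\cong F\times\R$ because every loop on a Seifert surface has linking number zero with $L$; the translates of the lift cut $\widetilde M$ into a bi-infinite line of copies of $M$ cut along $S$, amalgamated over copies of $\pi_1(S)$; finite generation of $\pi_1(\widetilde M)\cong\pi_1(F)$ (a free group of finite rank) forces the outermost amalgamating inclusions, hence by deck-transformation invariance all of them, to be surjective, while incompressibility makes them injective; Stallings' criterion then gives that $M$ cut along $S$ is homeomorphic to $S\times I$, so $S$ is a fiber. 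I would promote that paragraph to the main proof, noting that the genus equality $\chi(S)=\chi(F)$ then falls out as a consequence rather than being needed as input.
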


\subsection{Teichm\"uller polynomial} \label{sec:teich}

The \textit{Teichm\"uller polynomial} $\theta_F$ for a fibered face $\mathcal{F} \subset H^1(M,\mathbb{R})$ is a polynomial invariant that determines the stretch factors of all the monodromy of fibers in $\mathcal{F}$. Similarly to the Alexander polynomial, the Teichm\"uller polynomial has coefficients in the group ring $\mathbb{Z}(G)$ where $G = H_1(M,\mathbb{Z})/$torsion.

We describe here one way to compute the Teichm\"uller polynomial. Let $\varphi \colon S \to S$ be a pseudo-Anosov mapping and let $x = x_1, \cdots x_{n-1}$ be a multiplicative basis for 

$$  
H = \Hom(H^1(M,\mathbb{Z}^\varphi),\mathbb{Z})
$$
, where $H^1(M,\mathbb{Z})^\varphi$ is the $\varphi$-invariant cohomology.
Remark that we can construct a natural map from $\pi_1(S)$ to $H$ by evaluating cohomology classes on loops. Choose now a lift $\Tilde{\varphi} \colon \Tilde{S} \to \Tilde{S}$ of $\varphi$ to the cover $\Tilde{S}$ corresponding to $H$ under the previous map.

Let $M = S \times [0,1]/ (p,1) \sim (\varphi(p),0)$ be the mapping torus of $\varphi$. Then we have that

$$ G = H_1(M,\mathbb{Z})/ \torsion = H \oplus \mathbb{Z}$$

We let $u$ denote the generator of the $\mathbb{Z}$ component of $G$ so that $G$ is generated by $x_1,\cdots,x_{n-1}$ and $u$.
Let $V$ and $E$ be the vertices and the edges of an invariant train track $\tau$ on $S$ carrying the pseudo-Anosov map $\varphi$. The lifts $\Tilde{V}$ and $\Tilde{E}$ of $V$ and $E$ to $\Tilde{S}$ can respectivily be considered as $\mathbb{Z}(H)$-modules. Therefore, the lift $\Tilde{\varphi}$ asts as matrices $P_V(x)$ and $P_E(x)$ on these $\mathbb{Z}(H)$-modules. McMullen showed in \cite{mcmullen2000polynomial} that the Teichm\"uller polynomial can then be computed in term of these two matrices.

\begin{thm} 
Under the previous notations, the Teichm\"uller polynomial can be explicitly computed as follows:
    $$
    \theta_\mathcal{F}(x,u) = \frac{\det(u I - P_E(x))}{\det(u I - P_V(x))}
    $$
\end{thm}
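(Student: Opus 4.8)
The plan is to compute $\theta_{\mathcal{F}}$ by applying McMullen's formula \cite{mcmullen2000polynomial}, stated just above, to an explicit invariant train track for the monodromy. From the description of the fibre $S$ over $\mathcal{F}$ and of the monodromy $\varphi$ established in the previous sections, $S$ is a cyclic plumbing of Hopf bands, one per component of $C(n,-2)$, and $\varphi$ is the corresponding product of Dehn twists $T_{c_1}^{\varepsilon_1}\cdots T_{c_n}^{\varepsilon_n}$ along core curves $c_1,\dots,c_n$ for which $c_k$ meets $c_{k\pm 1}$ once (indices mod $n$), $c_k$ is disjoint from the remaining cores, and the signs $\varepsilon_k$ come from the $-2$ half-twists; since $M(n,-2)$ is hyperbolic, $\varphi$ is pseudo-Anosov. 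The multiplicative basis $x_1,\dots,x_{n-1}$ of $H=\Hom(H^1(M,\mathbb{Z})^\varphi,\mathbb{Z})$ is represented by loops in $S$ running ``once around'' between consecutive components, and $u$ generates the $\mathbb{Z}$-factor of $G=H_1(M,\mathbb{Z})/\torsion=H\oplus\mathbb{Z}$.

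We will take for $\tau$ the train track obtained by smoothing $c_1\cup\dots\cup c_n$ at each of its $n$ crossings, compatibly with the signs $\varepsilon_k$; this is a train track on $S$ with $n$ switches $v_1,\dots,v_n$ (one per crossing, cyclically indexed) and $2n$ real branches, the two arcs into which its two crossings cut each core $c_k$. Using the standard local picture of a Dehn twist acting on a smoothed pair of curves, one checks that $\varphi(\tau)$ is carried by $\tau$ and that $\tau$ fills $S$, so the hypotheses of McMullen's formula hold. Lifting $\tau$ to the cover $\widetilde S$ determined by $\pi_1(S)\to H$ and picking one lift of each switch and each branch as a $\mathbb{Z}(H)$-basis of $\widetilde V$ and $\widetilde E$, the lift $\widetilde\varphi$ acts by an $n\times n$ matrix $P_V(x)$ and a $2n\times 2n$ matrix $P_E(x)$; recording how far around the cyclic chain the carrying map pushes each lifted cell produces exactly the deck weights $a_1=1,\ a_2=x_1^{-1},\dots,a_n=(x_1\cdots x_{n-1})^{-1}$. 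One sets up $\tau$ so that the induced map on switches is diagonal — each $v_k$ fixed up to its deck translation $a_k$ — whence $P_V(x)=\operatorname{diag}(a_1,\dots,a_n)$ and $\det(uI-P_V(x))=\prod_{k=1}^{n}(u-a_k)=\pm A$.

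It then remains to evaluate $\det(uI-P_E(x))$. Ordering the $2n$ branches by the core they belong to, $uI-P_E(x)$ is cyclically structured: the ``diagonal'' part over $c_k$ contributes a factor $(a_k-u)$, while the coupling across the switch $v_k$ forms a cyclically-chained rank-$n$ correction, and the two branches through $v_k$ carry precisely the two consecutive factors $(a_k-u)$ and $(a_{k-1}-u)$. Expanding the determinant along this cyclic structure — equivalently, applying the matrix-determinant lemma to the rank-$n$ correction and using that omitting those two branches deletes exactly those two factors — gives $\det(uI-P_E(x))=\pm A\cdot\bigl(A-\sum_{k=1}^{n}u a_k A_k\bigr)$ with $A_k=A/((a_k-u)(a_{k-1}-u))$. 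Dividing by $\det(uI-P_V(x))=\pm A$ and normalizing away the unit ambiguity of the Teichm\"uller polynomial gives $\theta_{\mathcal{F}}(x,u)=A-\sum_{k=1}^{n}u a_k A_k=P(x_1,\dots,x_{n-1},u)$, as asserted.

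The main obstacle is the middle step: verifying that the smoothed track $\tau$ is genuinely invariant (the carrying $\varphi(\tau)\prec\tau$, possibly only after a standard refinement of its switches) and that it fills $S$, and — most delicately — attaching the correct element of $H$ to each branch of the lifted train-track map, since a sign or off-by-one error in these homological decorations would survive into the final polynomial. Once $P_V(x)$ and $P_E(x)$ are correctly pinned down, evaluating $\det(uI-P_E(x))$ is a structured, if lengthy, determinant computation. A useful consistency check is to specialize all $x_i=1$: then $a_k=1$, $A=(1-u)^n$, $A_k=(1-u)^{n-2}$, so $P$ collapses to $(1-u)^{n-2}\bigl(u^2-(n+2)u+1\bigr)$, which must match the characteristic polynomial of the homological monodromy $\varphi_*$ determined in \cite{baik2022topological}; small cases such as $n=3$ can also be checked directly.
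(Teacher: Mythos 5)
Your proposal does not prove the stated theorem; it proves something else while assuming the theorem as a premise. The statement in question is McMullen's general determinant formula: for \emph{any} fibered face $\mathcal{F}$ with pseudo-Anosov monodromy carried by an invariant train track $\tau$, the Teichm\"uller polynomial equals $\det(uI-P_E(x))/\det(uI-P_V(x))$, where $P_E$ and $P_V$ record the action of the lifted map on the $\mathbb{Z}(H)$-modules of lifted edges and vertices. This is a quoted preliminary (the paper cites \cite{mcmullen2000polynomial} and offers no proof of its own), and a genuine proof must start from the \emph{definition} of $\theta_\mathcal{F}$ --- as the generator of the Fitting ideal of the module of transverse measures on the lift of the invariant lamination to the $G$-cover of the mapping torus --- and show that the train track furnishes a presentation of that module whose determinant is exactly this ratio. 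Your very first sentence, ``apply McMullen's formula, stated just above,'' takes the conclusion as an input, so as a proof of this statement the argument is circular and contains none of the required content (no lamination module, no presentation matrix, no argument that the vertex determinant cancels the ambiguity in the choice of $\tau$).

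What you have actually sketched is the paper's Theorem \ref{thm:teichpoly}, the explicit polynomial $A-\sum_k ua_kA_k$ for the face of $C(n,-2)$. Even judged against that target, your route differs from the paper's: the paper does not build a smoothed train track from the core curves, but instead writes the lifted monodromy as a product of explicit $2n\times 2n$ matrices $T_V$ and $T_H$ coming from the Hopf-band (Murasugi sum) decomposition, and evaluates $\det(T_VT_H-uI)/\det(D-uI)$ by block-matrix row reduction in Appendix \ref{appendix:A}. Your train-track version could in principle work, but you yourself flag the decisive step --- invariance of $\tau$, the correct $H$-decorations $a_k$ on lifted branches, and the diagonality of $P_V$ --- as unverified, and that is precisely where the content lies. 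If your goal is the displayed determinant formula itself, you need an entirely different argument; if your goal is Theorem \ref{thm:teichpoly}, you should say so and then either carry out the train-track bookkeeping in full or follow the matrix decomposition the paper uses.
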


\section{The $n$ chained links and their complements}

In \cite{kin2014dynamics}, Eiko Kin analysed in detail a $3$-manifold, known as the magic manifold. 
This manifold has the property that all the faces of its Thurston unit ball are fibered. 
She was able to precisely determine all the fibered faces and, for each integer point in a fibered face, find the topology of the associated monodromy (i.e determine its genus and the number of boundary components).
In this section we generalize the technique used for the magic $3$-manifold to study sequences of fibers in more general link complements. 
We investigate whether these $n$-chained links are fibered, what are the fibers and the associated monodromies, and the shape of Thurston unit norm ball of various $C(n,p)$'s.

\begin{defn}[$n$-chained link]
    A $n$-chained link is a link with $n$ components which are linked in a circular fashion. Some of the components may have self half-twists.
    One can always gather all such self half-twists into a single component, with same clasps shape (see figure \ref{fig:C(n,p)ex}).
    Here the word clasp designates a pair of crossings of two adjacent link components.
    Remark that there are only $2$ possible shapes of clasps and one can change one to the other by performing a self-half twist in a suitable direction.
    
    \begin{figure}[h]
        \centering
        \includegraphics[scale = 0.5]{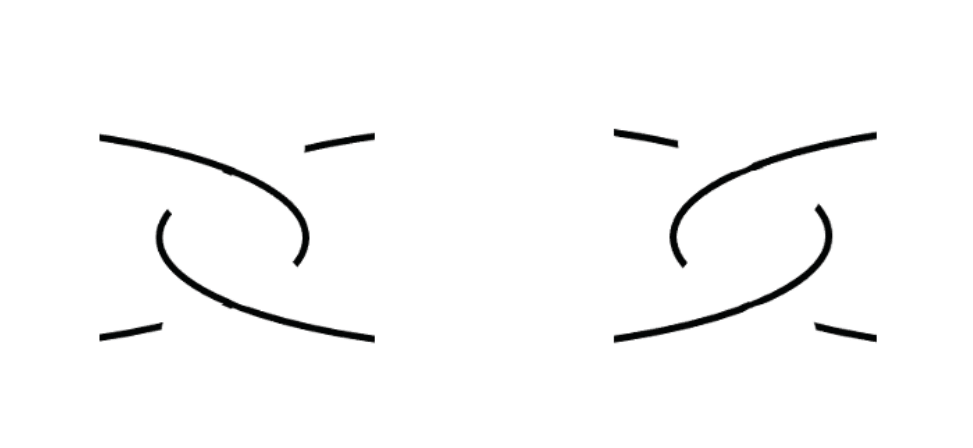}
        \caption{The $2$ different kinds of clasps, we will call the left one as a \textit{$+$ clasp} and the right one as \textit{a $-$ clasp}}
        \label{fig:clasps}
    \end{figure}
    We denote such an $n$-chained link by $C(n, p)$ where $n$ is the number of components and $p$ indicates the number of half twists.
    Note that $p$ is an integer, and the sign of $p$ indicates the direction of the half twists. We choose the positive sign as the direction of twists which makes $C(n, p)$ admits an alternating link diagram.

\end{defn}

\begin{figure}[h]
    \centering
    \includegraphics[width=.9\textwidth]{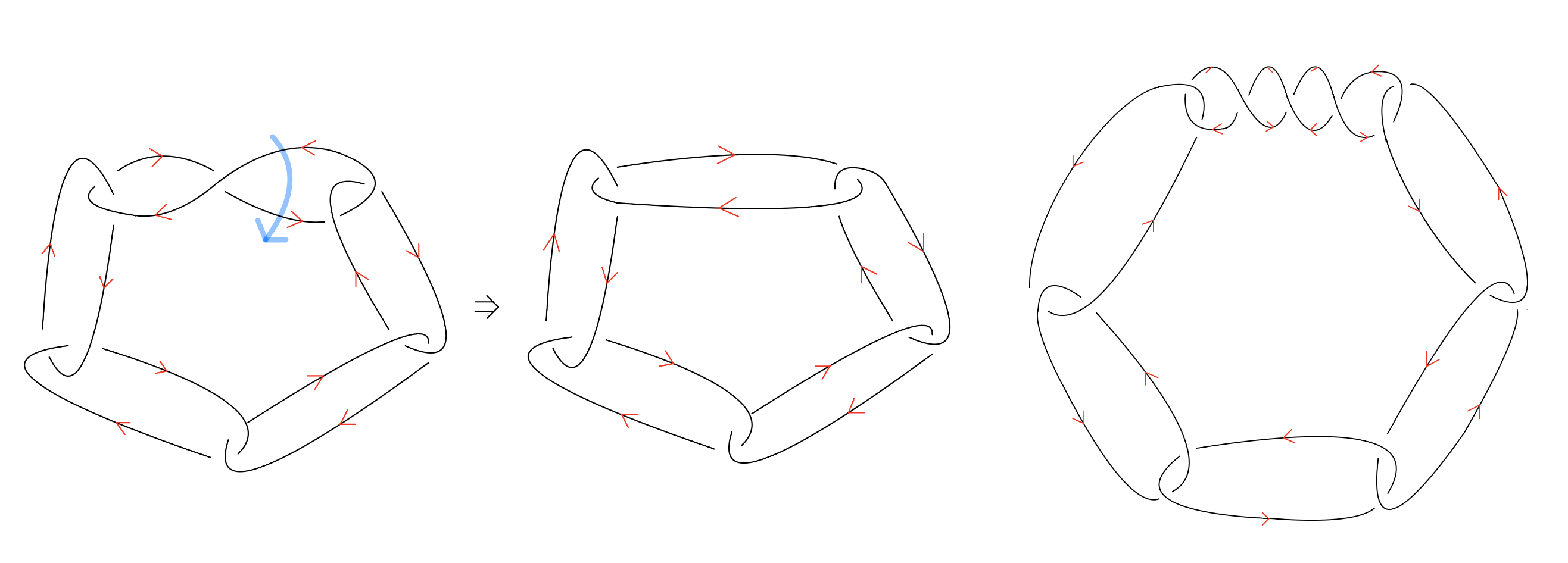}
    \caption{$C(5,-1)$ and $C(6,3)$.}
    \label{fig:C(n,p)ex}
\end{figure}

\begin{rmk}
    Note that any choice of `clasp shape' can be isotoped into the other choice.
    If we denote by $\widetilde{C}(n,p)$ the same link as $C(n,p)$ but with a different choice of a clasp shape, we can perform $-n$ half twists to flip every clasp into the other shape. 
    Keep in mind that the sign of $p$ depends on the shape of clasp.
    Since every self twist is an isotopic move, we get $\widetilde{C}(n,p) \cong C(n, -p-n)$.
    But $\widetilde{C}(n,p)$ is isomorphic to $C(n,p)$ under orientation reversing isomorphism, so we get $C(n,p) \cong C(n,-p-n)$.
      
    Usually the `minimally twisted' chain link is the one with the maximal alternating clasp shapes. 
    With our notation, the minimally twisted $n$-chained link is $C(n, -\floor{\frac{n}{2}}{})$.
\end{rmk}

Let $M(n,p)$ be the complement of a small enough neighborhood $\mathcal{N}(C(n,p))$ of $C(n,p)$.
For example, $M(3,0)$ is the magic 3-manifold.
Note that $\partial \mathcal{N}(C(n,p))$ is a disjoint union of $n$ tori.
In \cite{neumann2011arithmetic}, Neumann and Reid prove that $M(n, p)$ with $n \geq 3$ is hyperbolic if and only if $\{|n+p|, |p|\} \not\subseteq \{0, 1, 2\}$.
Moreover it is fibered, which is proven by Leininger \cite{leininger2002surgeries}.
More precisely, Leininger shows that, except for $(n,p) = (2,-1)$, $M(n,p)$ is fibered if $n \geq -p \geq 0$ by providing an actual fibers. 
We will use that construction later.

Now, we focus on the homology of $M(n,p)$.
Consider that we draw $C(n,p)$ in such a way that the top link has the $p$ half-twists, as in the figure \ref{fig:C(n,p)ex}.
We then denoted the top link component as $L_1$, and we enumerate the other components $L_2,L_3, \cdots L_n$ in a clockwise fashion. 

Each $L_i$ bounds a twice punctured disk, the punctures coming from $L_{i-1}$ and $L_{i+1}$. We will denote this twice punctured disk by $K_i$.
Note that $\{[K_i]_{1\leq i \leq n}\}$ forms a basis of $H_2(M, \partial M)$.
Then we have
\begin{lem}[\cite{baik2022topological}, lemma 4.6]\label{lem:H2coord}
    The fiber $S$ provided from \cite{leininger2002surgeries} has a coordinate $(1 ,\cdots, 1, -1)$ with respect to the base $[K_i]$.
    \emph{i.e.,} $[S] = [K_1] + \cdots + [K_{n-1}] - [K_n]$.
\end{lem}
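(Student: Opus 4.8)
The plan is to compute the homology class $[S]$ directly from an explicit description of the Leininger fiber $S$, rather than through any abstract intersection pairing. First I would recall the fiber surface $S$ that Leininger constructs in \cite{leininger2002surgeries} for $M(n,p)$ when $n \geq -p \geq 0$: it is built as an iterated Murasugi sum of Hopf bands (or, equivalently, read off from the Seifert algorithm applied to the alternating diagram of $C(n,p)$, using \cite{gabai1986genera} to know this surface is taut hence a fiber). The key point is that this fiber meets the torus boundary component $\partial\mathcal{N}(L_i)$ in some collection of oriented curves, and the coefficient of $[K_i]$ in the expansion $[S] = \sum_i c_i [K_i]$ is determined by how $[S]$ and $[K_i]$ pair against the dual meridian-longitude basis of $H_1(\partial\mathcal{N}(L_i))$. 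Concretely, since $\{[K_i]\}$ is a basis of $H_2(M,\partial M)$ and the natural way to detect the $i$-th coordinate is to look at the class $[S]$ restricted to (a curve cutting across) the $i$-th cusp, I would fix for each $i$ a curve $\mu_i$ (a meridian of $L_i$, or equivalently an arc in the twice-punctured disk $K_j$ adjacent to it) and show $c_i = [S]\cdot \mu_i$ with the appropriate sign convention.

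The main computational step is then to track orientations. Each $L_i$ bounds the twice-punctured disk $K_i$; the link $C(n,p)$ is oriented, and with the chosen orientation all clasps between consecutive components $L_i, L_{i+1}$ for $2 \le i \le n-1$ (the ``untwisted'' part of the chain) contribute in the same way, so the coefficients $c_2, \dots, c_{n-1}$ are all equal; normalizing $S$ so that it is a Seifert surface for the whole link forces this common value to be $1$. The component $L_1$ carrying the $p$ half-twists is still adjacent to $L_2$ and $L_n$ in the circular pattern, so $c_1$ is also $1$: the half-twists on $L_1$ are self-crossings and do not change how the Seifert surface sits against the cusps of $L_2, \dots, L_n$, they only add local Hopf bands within a neighbourhood of $L_1$. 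The one coefficient that differs is $c_n$: because the chain closes up circularly, going around the cycle $L_1, L_2, \dots, L_n, L_1$ the orientations are compatible around all clasps except one, and that single ``defect'' — forced by the fact that $C(n,p)$ is genuinely a closed chain and not a trivial braid closure — flips the sign, giving $c_n = -1$.

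I would make the sign bookkeeping rigorous by choosing a concrete picture: isotope $C(n,p)$ to the standard alternating (or near-alternating) diagram of Figure \ref{fig:C(n,p)ex}, apply the Seifert algorithm explicitly, and identify the resulting Seifert surface with a plumbing of $n$ (or $n+p$) Hopf bands arranged in a cycle; then $[S]$ in the basis $[K_i]$ is read off by intersecting with the $n$ disjoint meridian curves. Alternatively, and perhaps cleaner, I would invoke that the orientation of $S$ must induce the given orientation on $\partial S = C(n,p)$, note that $[K_i]$ induces on $L_i$ its boundary orientation but on $L_{i\pm 1}$ a meridian, and then the linear system $\partial\big(\sum c_i [K_i]\big) = [C(n,p)]$ on $H_1(\partial M)$ has the unique solution $(1,\dots,1,-1)$ once one checks the linking signs around the chain. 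The main obstacle is precisely this last check — pinning down the global sign on the ``closing'' clasp, i.e. verifying that the cyclic structure of the chain produces exactly one sign reversal and not zero or two — since this depends on the exact orientation convention for $C(n,p)$ and on how the Leininger fiber is oriented; once that single sign is fixed consistently with Lemma \ref{lem:H2coord} as stated, the rest is routine linear algebra over $H_1(\partial M;\mathbb{Z}) \cong \mathbb{Z}^{2n}$.
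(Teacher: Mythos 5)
First, note that the paper itself does not prove this lemma: it is quoted verbatim from \cite{baik2022topological} (Lemma 4.6), so there is no in-paper argument to compare yours against. Your overall strategy is the right one in principle --- since $[K_j]\cdot[m_i]=\delta_{ij}$ for the meridians $m_i$, the coefficient $c_i$ in $[S]=\sum_i c_i[K_i]$ is the algebraic intersection number $[S]\cdot[m_i]$, i.e.\ the signed number of longitudinal wraps of $\partial S$ around the $i$-th cusp, and this can be read off from Leininger's explicit surface. That reduction is correct and is essentially the only sensible way to prove the statement.

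The gap is that you never actually carry out the one computation the lemma is about. Your justification for $c_n=-1$ --- that ``the chain closes up circularly'' and this forces ``exactly one sign reversal'' --- is not a valid argument: there is no topological obstruction to an all-positive class, since $(1,\dots,1)$ is itself represented by a genus-one surface with $n$ boundary components (this is exactly the Seifert surface produced in Theorem \ref{thm:seifert_alg}, and for $-n\le p\le 0$ it is even a fiber, as used in Section \ref{sec:teich_polyC(n,-2)}). So the sign pattern $(1,\dots,1,-1)$ is a feature of the particular plumbing of $n+1$ Hopf bands that Leininger constructs, not something forced by the cyclic shape of the link, and distinguishing $(1,\dots,1,-1)$ from $(1,\dots,1)$ genuinely requires tracing orientations through that construction --- which you explicitly defer (``once that single sign is fixed consistently with Lemma \ref{lem:H2coord} as stated\dots''), making the argument circular at its key point. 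Your proposed alternative, solving $\partial\bigl(\sum_i c_i[K_i]\bigr)=[C(n,p)]$, is also flawed: the Leininger fiber is not a Seifert surface for the oriented link, and indeed for the class $(1,\dots,1,-1)$ the boundary slopes on most cusps are $l_i-2m_i$ rather than longitudes, so $\partial S$ is not $[C(n,p)]$ in $H_1(\partial M)$. To close the gap you would need to draw Leininger's fiber (the horizontal Hopf band plumbed with $n$ vertical ones) and count its signed intersections with each meridian directly, which is what the cited reference does.
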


Note that the fiber $S$ is a genus $1$ surface with $n$ boundaries and so its Euler characteristic is equal to $n$.
Our next goal is to find the fibered face $\mathcal{F}$ that contains $S$.

\section{Thurston unit ball for $C(n,0)$}

\subsection{Thurston unit ball}

We start by stating the main theorem of this section, but we will prove it only when all the needed lemmas will be established. The notation used in the main theorem will nonetheless be used throughout the whole section.

\begin{thm}\label{thm:normball0}
    The Thurston unit ball $B_n$ of $C(n,0)$ is the union of:
    \begin{enumerate}
        \item The $n$-dimensional cocube with vertices $(\pm 1, 0, \cdots, 0), \cdots, (0, \cdots, 0, \pm 1)$, and
        \item Two $n$-simplices whose vertices are $(1, 0, \cdots, 0), \cdots, (0, \cdots, 0, 1)$, $\frac{1}{n-2}(1, \cdots, 1)$ and its antipodal image.
    \end{enumerate}
    Hence the fiber $\frac{1}{n}(1,\cdots, 1, -1)$ lies in the fibered face $\mathcal{F}$ whose vertices are $(1, 0, \cdots, 0), \cdots, (0, \cdots, 1, 0), (0, \cdots, 0, -1)$ and $\frac{1}{n-2}(1, \cdots, 1)$.
    Moreover, every face of $B_n$ is a fibered face.
\end{thm}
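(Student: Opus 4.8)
The plan is to determine $B_n$ by pinning it between two copies of the polytope $P$ from the statement: an ``outer'' bound $P\subseteq B_n$ obtained from explicit surfaces, and an ``inner'' bound $B_n\subseteq P$ obtained by exhibiting, for every facet of $P$, a fibered class in its relative interior whose Euler characteristic matches the size predicted by $P$. The same fibered classes will then show that every face is fibered, and a short computation with the supporting hyperplane of $\mathcal F$ will pin down its vertices.

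For the outer bound $P\subseteq B_n$: each $K_i$ is a twice–punctured disc, so $\|[K_i]\|\le-\chi(K_i)=1$, and by the triangle inequality the cocube part of $P$ is contained in $B_n$. For the two extra vertices $\pm\tfrac1{n-2}(1,\dots,1)$ one must produce a surface of class $[K_1]+\cdots+[K_n]$ with $-\chi=n-2$; applying Seifert's algorithm to an alternating diagram of $C(n,0)$ (which exists by our sign convention on $p$), or performing an explicit oriented/Murasugi sum built from the $K_i$ and Leininger's fibre, yields such a surface, so $\|\tfrac1{n-2}([K_1]+\cdots+[K_n])\|\le1$. Since every vertex of $P$ then has norm at most $1$ and $P$ is their convex hull, $P\subseteq B_n$.

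For the inner bound and fiberedness: by Lemma~\ref{lem:H2coord}, Leininger's fibre $S$ has class $(1,\dots,1,-1)$ and $-\chi(S)=n$, so $\tfrac1n(1,\dots,1,-1)\in\partial B_n$ and lies in the cone over $\mathcal F$; checking the vertices of $P$ against the functional $x_1+\cdots+x_{n-1}-x_n$ shows this functional equals $1$ exactly on the face $\mathcal F=\mathrm{conv}\{e_1,\dots,e_{n-1},-e_n,\tfrac1{n-2}(1,\dots,1)\}$ and is $\le1$ on $P$, and for a suitable choice of weights $\tfrac1n(1,\dots,1,-1)$ is a convex combination of these vertices with all weights positive, hence lies in $\mathrm{relint}(\mathcal F)$. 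The rotational symmetry of $C(n,0)$ and the central symmetry of the Thurston ball then give fibered classes in the relative interiors of all $2n$ ``long'' facets of $P$ (those containing a vertex $\pm\tfrac1{n-2}(1,\dots,1)$). For each remaining cocube facet $F_\epsilon$, $\epsilon\in\{\pm1\}^n$ with at least two entries of each sign, one builds — using Gabai's theorems on Murasugi sums and the fact that Hopf bands are fibered — a fibered surface with $-\chi=n$ whose class is $\sum_i\epsilon_i[K_i]$, so that $\tfrac1n\sum_i\epsilon_i[K_i]\in\mathrm{relint}(F_\epsilon)\cap\partial B_n$. Now a fibration class is norm–minimizing (Thurston) and lies in the relative interior of a fibered facet of $B_n$ on which $\|\cdot\|$ is a single linear functional equal to $-\chi$; since $P\subseteq B_n$, a linear functional that is $\le1$ on $P$ and equals $1$ at a relative–interior point of a facet $F$ of $P$ must be the supporting functional of $F$, so every facet functional of $P$ lies in $B_n^{*}$, giving $P^{*}\subseteq B_n^{*}$ and hence $B_n\subseteq P$. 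Thus $B_n=P$, and since each facet of $B_n$ contains a fibration class in its relative interior, every face is a fibered face.

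The main obstacle is the construction of the fibered surfaces for the cocube facets $F_\epsilon$ when $n\ge4$ (for $n=3$ there are none, and Leininger's fibre together with symmetry suffice). The symmetry group of $C(n,0)$ has order at most $4n$ and does not act transitively on the $2^n-2$ facets of $P$, so one genuinely needs a new surface for each sign pattern $\epsilon$; the delicate point is to control the homology class and the Euler characteristic simultaneously in the Murasugi–sum decomposition, since naively flipping the sign of a plumbed Hopf band changes the boundary link away from $C(n,0)$, and the decomposition must be arranged so that the boundary stays $C(n,0)$. An alternative route to the inner bound is to invoke McMullen's inequality $\|\cdot\|_A\le\|\cdot\|_T$ (valid since $b_1(M)=n\ge3$), compute the multivariable Alexander polynomial of $C(n,0)$, check that the polar of $P$ is the difference body of its Newton polytope, and then deduce that every face is fibered from monicity of the polynomial in every direction.
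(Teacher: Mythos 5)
Your overall strategy --- sandwiching $P\subseteq B_n\subseteq P$, with the inner bound obtained by exhibiting a norm-realizing class in the relative interior of \emph{every} facet of $P$ and then passing to supporting functionals --- is sound, and your polar-duality step is correct. It is in fact more careful in outline than the paper's own argument, which only produces the $2n$ facets $\pm P_i=\pm\,\mathrm{conv}\{e_1,\dots,-e_i,\dots,e_n,\tfrac{1}{n-2}(1,\dots,1)\}$ from Leininger's fiber, Corollary \ref{cor:sub_facets_in_C(n,0)} and the cyclic symmetry, and then asserts that their union closes up; that assertion holds only for $n=3$, since for $n\ge 4$ the claimed ball has in addition the $2^n-2n-2$ cocube facets indexed by sign patterns with at least two entries of each sign. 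So you have located exactly where the real work lies.

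That is also where your proposal has a genuine gap, which you flag yourself: the surfaces certifying $\bigl\|\sum_i\epsilon_i[K_i]\bigr\|=n$ for mixed-sign $\epsilon$ are never constructed, and without them neither $B_n\subseteq P$ nor the fiberedness claim is established for $n\ge4$. Three remarks. First, for the inner bound you do not need fibered surfaces: the Seifert surface $S(n,0)_\epsilon$ of the alternating diagram with orientation $\epsilon$ has genus $1$ and $n$ boundary components (the circle/crossing count as in the proof of Theorem \ref{thm:seifert_alg}, which you should verify persists at $p=0$ for mixed signs), hence $-\chi=n$, and it is norm-minimizing by Gabai's theorem on alternating diagrams since the boundary slopes force exactly $n$ boundary components; this closes the gap more cheaply than a Murasugi-sum construction. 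Second, your plan to realize \emph{every} cocube facet by a fibered surface cannot succeed as stated: by the paper's own Theorem \ref{thm:detecting_fiber_even}, $S(n,0)_\epsilon$ is a fiber iff the number of sign changes is $s=2$, and since it is norm-minimizing, a class with $s\ge4$ (e.g.\ $(1,-1,1,-1)$ for $n=4$) is not a fibration class at all; so those facets are not fibered, and the final clause of the theorem requires the corresponding restriction rather than a cleverer construction. Third, on the outer bound, Leininger's fiber lies in the class $(1,\dots,1,-1)$, not $(1,\dots,1)$, so the Murasugi-sum route to the vertex $\tfrac{1}{n-2}(1,\dots,1)$ is not the natural one; what you want is the Seifert surface of the coherently oriented diagram, a planar surface with $-\chi=n-2$, whose Seifert-circle count must be checked separately since it differs from the mixed-sign case.
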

Here the $n$-dimensional cocube is a dual of $[0,1]^n$.
One can consider it as a consecutive suspension of a closed interval $[-1,1]$.

At this moment we only note the theorem generalises the case of the magic $3$-manifold case, whose Thurston unit ball is a parallelogram with vertices $(\pm1,0,0)$, $(0,\pm1,0)$, $(0,0,\pm1)$ and $(1,1,1), (-1,-1,-1)$.
We observe that
\begin{lem}\label{lem:eqn_of_fib_face}
    Suppose $\sigma$ is the $n$-dimensional simplex of vertices $a_1, \cdots, a_n$ in a fibered face $\mathcal{F}$.
    Then $\sigma$ is a subset of a fibered face if there exists a point $a \in \sigma$ whose Thurston norm $-\chi(a)$ is equal to $1$.
    In this case the linear equation of the fibered face $\mathcal{F}$ is $\sum_{i = 1}^n x_i/a_i = 1$.
\end{lem}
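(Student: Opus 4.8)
The plan is to exploit two standard facts about the Thurston norm: that it is linear (in fact, a linear functional) on the cone over each top-dimensional face of the unit ball, and that fiberedness is an open condition on a fibered face together with the classical fact that the cone on a fibered face is exactly the set of fibered classes. First I would observe that the $n$ vertices $a_1,\dots,a_n$ of $\sigma$, being vertices of the fibered face $\mathcal F$, are unit-norm classes lying on the affine hyperplane that cuts out $\mathcal F$. Since $\mathcal F$ is a top-dimensional face of $B_n$, this hyperplane is defined by a linear functional $\ell$ with $\ell(a_i)=1$ for all $i$; equivalently, on the cone $\mathbb R_{>0}\cdot\mathcal F$ one has $x(\cdot)=\ell(\cdot)$, so the Thurston norm restricted to that cone agrees with $\ell$. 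Writing $\ell(v)=\sum_i c_i x_i$ in the basis dual to $[K_i]$, the conditions $\ell(a_i)=1$ determine the $c_i$; the claim is precisely that $c_i = 1/a_i$ when the $a_i$ are chosen along the coordinate directions, giving the stated equation $\sum_i x_i/a_i = 1$.

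The key steps, in order, are: (1) recall (from Thurston's work, cited in the introduction) that each top-dimensional face $\mathcal F$ of $B_n$ lies on a hyperplane of the form $\ell=1$ for a linear functional $\ell$, and that $x = \ell$ on the open cone over $\mathcal F$; (2) show that the open simplex $\mathrm{int}(\sigma)$ lies in the interior of that cone — this is where the hypothesis enters: the point $a\in\sigma$ with $-\chi(a)=x(a)=1$ is a unit-norm class lying in $\sigma\subset\mathcal F$, hence $a$ is in $\mathcal F$ and, since $a$ is a convex combination of the face vertices $a_i$ which themselves lie in $\mathcal F$, the whole simplex $\sigma$ is contained in $\mathcal F$ by convexity of the face; (3) conclude that $\sigma$ is a subset of the fibered face (using that being a fibered face is a property of the whole face, so any subset of $\mathcal F$ sits inside a fibered face); (4) solve the linear system $\ell(a_i)=1$: since here each $a_i$ is taken to be a positive multiple of the $i$-th coordinate vector, writing $a_i = (0,\dots,0,\alpha_i,0,\dots,0)$ forces $c_i\alpha_i = 1$, i.e. $c_i = 1/\alpha_i$, and with the mild abuse of notation identifying $a_i$ with its nonzero coordinate $\alpha_i$ this reads $c_i = 1/a_i$, so the face equation is $\sum_{i=1}^n x_i/a_i = 1$.

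I expect the main obstacle to be step (2): one must be careful that having a single point $a\in\sigma$ of norm exactly $1$ genuinely forces all of $\sigma$ into the (closed) fibered face rather than merely having $\sigma$ tangent to it at $a$. The point is that $\sigma$ is assumed to have its vertices $a_i$ in $\mathcal F$, so $\sigma\subseteq\mathcal F$ is automatic by convexity of $\mathcal F$; the role of the norm-$1$ hypothesis is only to certify that $a_i\in\partial B_n$ rather than in the interior — equivalently, that the $a_i$ actually span a genuine face and not a lower-dimensional chord. Once that is pinned down, the remaining computation in step (4) is routine linear algebra, and the identification of the face equation with $\sum x_i/a_i = 1$ is immediate from the coordinate form of the $a_i$. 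A small point worth stating explicitly in the write-up: this lemma will be applied with the $a_i$ chosen as intersections of $\mathcal F$ with the coordinate axes, which is exactly the configuration for which the abbreviated notation "$a_i$" (a scalar) makes sense.
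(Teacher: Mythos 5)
Your overall strategy --- polytope structure of the unit ball plus linearity of the norm on the cone over a face --- is the same one the paper invokes (its proof is the single line ``direct consequence of the fact that the Thurston unit ball is a polytope''), and your step (4) correctly pins down the only sensible reading of the expression $\sum_i x_i/a_i$. But your resolution of step (2) misses the actual content of the lemma. You dispose of the containment $\sigma\subseteq\mathcal F$ by saying it is ``automatic by convexity'' because the vertices $a_i$ are assumed to lie in $\mathcal F$, and you then declare that the role of the norm-one hypothesis is only to certify that $a_i\in\partial B_n$. That reading makes the hypothesis about the point $a$ vacuous and does not match how the lemma is used: in Corollary \ref{cor:sub_facets_in_C(n,0)} the vertices are $e_1,\dots,e_{n-1},-e_n$, which are only known individually to be norm-one classes (twice-punctured disks); it is \emph{not} known in advance that they span a common face, and certifying $a_i\in\partial B_n$ needs nothing beyond $x(a_i)=1$.

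The missing argument is the convexity step that the hypothesis about $a$ is really for. Since $x$ is a norm, on $\sigma=\{\sum_i t_i a_i : t_i\ge 0,\ \sum_i t_i=1\}$ one has $x(\sum_i t_i a_i)\le\sum_i t_i\, x(a_i)=1$. If some point $a$ in the \emph{relative interior} of $\sigma$ satisfies $x(a)=1$, then $x\equiv 1$ on all of $\sigma$: were $x(y)<1$ for some $y\in\sigma$, write $a=sy+(1-s)z$ with $z\in\sigma$ and $0<s<1$, and convexity gives $x(a)\le s\,x(y)+(1-s)\,x(z)<1$, a contradiction. Hence $\sigma$ lies entirely in the unit sphere, and since the unit ball is a polytope, a convex subset of its boundary is contained in a single face; that face is fibered because it contains a fibered class, and its supporting hyperplane is then determined by the conditions $\ell(a_i)=1$ exactly as in your step (4). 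Note that the argument genuinely requires $a$ to be interior to $\sigma$ (in the application it is the barycenter $\frac{1}{n}(1,\dots,1,-1)$); your write-up should state this, since for $a$ on the boundary of $\sigma$ the conclusion fails.
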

\begin{proof}
    The proof is direct consequence of the fact that the Thurston unit ball is a polytope.
\end{proof}

As in the magic manifold case, we can calculate the Euler characteristic of any primitive points in $\mathcal{F}$.
\begin{cor}\label{cor:sub_facets_in_C(n,0)}
The convex hull of the points $e_1,e_2,\cdots,-e_n$ and $\frac{1}{n-2}(1,1,\cdots,1)$ is a subset of the facet $F$ of $B_n$.
Moreover, any primitive point $\alpha := (\alpha_1,\cdots,\alpha_n)$ in the cone $\mathcal{C} := \R^+\cdot\mathcal{F}$ the Euler characteristic of the representative of $\alpha$ is $\alpha_1+\cdots+\alpha_{n-1} - \alpha_{n}$.
\end{cor}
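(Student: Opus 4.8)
The plan is to identify the fibered face $\mathcal F\subset H_2(M(n,0),\partial M(n,0);\R)$ that contains the Leininger fiber and to compute the Thurston norm on the cone over it as an explicit linear functional; both assertions then drop out. By Lemma~\ref{lem:H2coord} the fiber $S$ represents the primitive class $[S]=e_1+\cdots+e_{n-1}-e_n$ and is a genus-one surface with $n$ boundary components, so $-\chi(S)=n$; since a fiber is norm minimizing, $x([S])=n$, and since $S$ is a fiber, $[S]$ lies in the open cone $\mathcal C=\R^{+}\cdot\mathcal F$ over a fibered facet $\mathcal F$ of $B_n$. Let $\phi$ be the vertex of the dual unit ball dual to $\mathcal F$; then $\phi(v)\le x(v)$ for all $v$, with equality exactly on $\overline{\mathcal C}$.

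Next I would compute the $x(e_i)$. The twice-punctured disk $K_i$ bounded by $L_i$ is connected with $\chi(K_i)=-1$, so $x(e_i)\le 1$; since $M(n,0)$ is hyperbolic for $n\ge3$ the Thurston norm is nondegenerate, and being $\Z$-valued on integral classes it forces $x(e_i)=1$ for every $i$, hence also $x(-e_n)=1$. Applying subadditivity along $[S]=e_1+\cdots+e_{n-1}+(-e_n)$ gives
\[
n=x([S])=\phi([S])=\sum_{i<n}\phi(e_i)+\phi(-e_n)\le\sum_{i<n}x(e_i)+x(-e_n)=n,
\]
so every inequality is an equality: $\phi(e_i)=x(e_i)=1$ for $i<n$ and $\phi(e_n)=-1$, and $e_1,\dots,e_{n-1},-e_n\in\overline{\mathcal C}$; being of norm $1$ they lie on $\mathcal F$. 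Thus $\sigma:=\operatorname{conv}(e_1,\dots,e_{n-1},-e_n)\subset\mathcal F$, and since $\tfrac1n[S]\in\sigma$ has norm $1$, Lemma~\ref{lem:eqn_of_fib_face} (with $a_1=\cdots=a_{n-1}=1$, $a_n=-1$) identifies the affine hull of $\mathcal F$ as $\{x_1+\cdots+x_{n-1}-x_n=1\}$, i.e.\ $\phi(v)=v_1+\cdots+v_{n-1}-v_n$.

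It remains to place $w:=\tfrac1{n-2}(1,\dots,1)$ on $\mathcal F$. Writing $\mathbf 1=(1,\dots,1)=e_1+\cdots+e_n$, one has $\phi(w)=\tfrac1{n-2}\big((n-1)-1\big)=1$, so $w$ lies on the supporting hyperplane of $\mathcal F$; what remains is $w\in B_n$, i.e.\ $x(\mathbf1)\le n-2$. For this I would exhibit an $n$-punctured sphere $\Sigma$ properly embedded in $M(n,0)$ with one longitudinal boundary curve on each component of $C(n,0)$, so that $[\Sigma]=[K_1]+\cdots+[K_n]=\mathbf 1$ and $-\chi(\Sigma)=n-2$; concretely, a checkerboard surface of the alternating diagram of $C(n,0)$, or, via Gabai's theorem for alternating links, a minimal Seifert surface that one checks has genus $0$ (for $n=3$ this is the familiar thrice-punctured sphere of the magic manifold). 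Granting $x(\mathbf1)\le n-2$, we get $1=\phi(w)\le x(w)\le 1$, so $x(w)=1$ and $w\in\overline{\mathcal C}\cap\partial B_n=\mathcal F$. Hence $\operatorname{conv}(e_1,\dots,e_{n-1},-e_n,w)\subseteq\mathcal F=:F$, a facet of $B_n$. Producing this $n$-punctured sphere — equivalently, certifying that the apex $w$ is not pushed strictly outside $B_n$ — is the step I expect to be the main obstacle; the rest is formal once the dual-polytope picture of the norm is in place.

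Finally, for the Euler-characteristic count: since $\mathcal F$ is a fibered face, $\mathcal C$ is a fibered cone, so a primitive integral $\alpha\in\mathcal C$ has a norm-minimizing representative all of whose components have negative Euler characteristic (a connected fiber when $\alpha$ is interior; in general no sphere, disk, torus or annulus components occur, $M(n,0)$ being hyperbolic). Therefore the Euler characteristic of the representative of $\alpha$ equals $x(\alpha)=\phi(\alpha)=\alpha_1+\cdots+\alpha_{n-1}-\alpha_n$, as claimed.
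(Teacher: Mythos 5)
Your core argument is the same as the paper's: the Leininger fiber $(1,\dots,1,-1)$ has $-\chi=n$, its normalization $\tfrac1n(1,\dots,1,-1)$ is a norm-one point inside the simplex on $e_1,\dots,e_{n-1},-e_n$ (each vertex having norm $\le 1$ via the twice-punctured disks $K_i$), so convexity forces the whole simplex onto the unit sphere with supporting hyperplane $x_1+\cdots+x_{n-1}-x_n=1$, and the Euler characteristic claim is just evaluation of that linear functional on the cone. This is exactly the content of Lemma~\ref{lem:eqn_of_fib_face} as the paper invokes it; your dual-functional/subadditivity phrasing is a more careful version of the same step, and your remark that the minimal representative of a primitive class in a fibered cone has no nonnegative-$\chi$ components is a justification the paper leaves implicit. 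Where you genuinely diverge is the apex $\tfrac1{n-2}(1,\dots,1)$: the paper's proof of the corollary simply does not address it (it only handles the simplex), whereas you correctly isolate the one nontrivial input, namely $x(1,\dots,1)\le n-2$, and you leave it as a sketch. That step is in fact fine and completable exactly as you propose: the standard alternating diagram of $C(n,0)$ with the circular orientation has $2n$ crossings and $n+2$ Seifert circles ($n$ small circles at the clasps plus an inner and an outer circle), so Seifert's algorithm yields a connected genus-zero surface with $\chi=(n+2)-2n=2-n$ representing $(1,\dots,1)$, and Gabai's theorem on alternating links certifies minimality. So there is no error in your proposal, only an unexecuted (but correct) construction at the one point where you go beyond what the paper itself writes down.
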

\begin{proof}
    Set
    \[
        a_i = \begin{cases}
			e_i, & 1\leq i\leq n-1\\
            -e_n, & i = n
		\end{cases}
    \]
    and $a = \frac{1}{n}(1,\cdots,1,-1)$.
    Since we already observed that $na$ is a fiber and $-\chi(na) = n$, it means that the linear equation $x_1 + \cdots + x_{n-1} - x_n = 1$ is the equation of a supporting hyperplane for the fibered face $\mathcal{F}$.
    Plugging $(\alpha_1, \cdots, \alpha_n)$ into $x_1 + \cdots + x_{n-1} - x_n$, we get the Euler characteristic for $\alpha$.
\end{proof} 

Now we are ready to prove theorem \ref{thm:normball0}.
\begin{proof}[Proof of theorem \ref{thm:normball0}]
    Note that $C(n,0)$ is circularly symmetric, so the points $p_i := \frac{1}{n}(1,\cdots,1) - \frac{2}{n}e_i$ for all $1\leq i\leq n$ is also a fiber.
    Hence by corollary \ref{cor:sub_facets_in_C(n,0)}, the $n$-dimensional parallelograms $P_i$ of vertices $e_1,\cdots,-e_i,\cdots,e_n$ and $\frac{1}{n-2}(1,1,\cdots,1)$ are subsets of the boundary of Thurston unit ball.
    (each $p_i$ is contained in $P_i$.)
    But the union of the $P_i$ forms a closed polytope, so it has to contain the boundary of Thurston unit ball.
\end{proof}

\subsection{Topological type of fibers}

Additionally to understanding the Thurston unit ball, we can also get information about each fibered surface in the fiber face $\mathcal{F}$.
To get the full topological type of representatives of given fibered points, we will use a slightly generalized version of the boundary formula proven by Kin and Takasawa, \cite{kin2008pseudo}.
\begin{lem}[\cite{baik2022topological}, lemma 4.9]\label{lem:gen_bdd_formula}
    Suppose $S$ is a minimal representative of $(a_1, \cdots, a_n) \in \mathcal{C}$.
    Then the number of boundaries of $S$ is equal to $\sum_{i = 1}^n \gcd(a_{i-1} + a_{i+1}, a_i)$, where the subscript follows $(i \mod n )+ 1$.
\end{lem}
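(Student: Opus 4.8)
The plan is to read the number of boundary components off the image of the homology class under the connecting homomorphism $\partial\colon H_2(M,\partial M;\Z)\to H_1(\partial M;\Z)=\bigoplus_{j=1}^n H_1(T_j;\Z)$, where $T_1,\dots,T_n$ are the cusp tori. Since $(a_1,\dots,a_n)$ lies in the fibered cone $\mathcal C$, a minimal representative $S$ is (isotopic to) the fiber of the associated fibration $M\to\mathbb{S}^1$; if the class is not primitive it is a union of parallel copies of such a fiber, which only rescales both sides of the claimed identity, so I may assume $(a_1,\dots,a_n)$ primitive and $S$ connected. Making $S$ transverse to each $T_j$, the intersection $S\cap T_j$ is the fiber of the restricted fibration $T_j\to\mathbb{S}^1$, hence a coherently oriented family of parallel essential simple closed curves whose number equals the divisibility in $H_1(T_j;\Z)\cong\Z^2$ of $\partial_j[S]$ (and is $0$ exactly when $\partial_j[S]=0$, matching the convention $\gcd(0,0)=0$). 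So the total number of boundary components of $S$ is $\sum_{j=1}^n\delta_j$, where $\delta_j$ denotes the divisibility of $\partial_j\big(\sum_i a_i[K_i]\big)$.

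Next I would compute $\partial_j[K_i]$ explicitly. The twice‑punctured disk $K_i$ meets only the cusps of $L_{i-1},L_i,L_{i+1}$: on $T_i$ its boundary is, by the natural choice of longitude $\lambda_i:=[\partial K_i\cap T_i]$, exactly $\lambda_i$ (coefficient $1$, no meridian term); on $T_{i\pm1}$ it is a meridian with coefficient equal to the algebraic intersection number of $K_i$ with $L_{i\pm1}$, i.e.\ $\operatorname{lk}(L_i,L_{i\pm1})=\pm1$. Collecting terms, $\partial_j\big(\sum_i a_i[K_i]\big)=a_j\lambda_j+\big(\operatorname{lk}(L_{j-1},L_j)\,a_{j-1}+\operatorname{lk}(L_{j+1},L_j)\,a_{j+1}\big)\mu_j$ in $H_1(T_j;\Z)$, whence $\delta_j=\gcd\!\big(a_j,\ \operatorname{lk}(L_{j-1},L_j)\,a_{j-1}+\operatorname{lk}(L_{j+1},L_j)\,a_{j+1}\big)$. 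The final step is to observe that the orientations of the components fixing the basis $\{[K_i]\}$ can be (and are) chosen so that every component links its two neighbors with the same sign, i.e.\ $\operatorname{lk}(L_{j-1},L_j)=\operatorname{lk}(L_{j+1},L_j)$ for all $j$; then $\delta_j=\gcd(a_j,\,a_{j-1}+a_{j+1})$ and summing over $j$ gives the formula. For $n=3$ this reproduces the Kin--Takasawa boundary formula for the magic manifold \cite{kin2008pseudo}.

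I expect the sign bookkeeping in that last step to be the main obstacle: a priori the two meridian contributions at a given cusp could carry opposite signs, which would replace $a_{j-1}+a_{j+1}$ by $a_{j-1}-a_{j+1}$ there. One has to check from the diagram of $C(n,p)$ that the chain admits a coherent orientation for which all the relevant linking numbers agree in sign — equivalently, that the orientation‑independent invariant $\prod_j\operatorname{lk}(L_j,L_{j+1})\in\{\pm1\}$ equals $+1$ — and that this is compatible with the conventions producing the fiber coordinates $(1,\dots,1,-1)$ of Lemma~\ref{lem:H2coord}. As a sanity check, evaluating $\sum_j\gcd(a_j,a_{j-1}+a_{j+1})$ at $(1,\dots,1,-1)$ yields $n$, consistent with the fiber of Lemma~\ref{lem:H2coord} being a genus‑one surface with exactly $n$ boundary components. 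The remaining ingredients — transversality of $S$ to the $T_j$, coherence of $\partial S$ on each torus, and the identification "divisibility $=$ number of parallel curves" — are standard properties of fibers of surface bundles, which I would only state.
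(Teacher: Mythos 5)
Your argument is essentially the same as the paper's (the proof cited from \cite{baik2022topological}, Lemma 4.9): compute the connecting homomorphism $\partial_*[K_i]=l_i-m_{i-1}-m_{i+1}$ on the cusp tori, observe that the boundary of a minimal representative meets each $T_i$ in parallel coherently oriented curves of slope $(a_{i-1}+a_{i+1},a_i)$, and count them by the gcd. Your extra care about the relative signs of the two meridian contributions and about reducing to the fiber (or parallel copies of it) only makes explicit what the paper leaves implicit, so the proposal is correct and matches the intended proof.
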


Now we focus on the monodromy map of various fibers $S$, provided from \cite{leininger2002surgeries}.

In \cite{leininger2002surgeries}, the fiber is a sum of $n+1$ Hopf bands.
By Gabai's celebrated theorems (\cite{gabai1985murasugi}) the monodromy of our given fiber $S$ is equal to the composition of Dehn twists.
For example, the minimal representative $S$ of $(1,\cdots, 1,-1)$ in $M(n,0)$ admits a monodromy of $n$ vertical Dehn twist, which the direction of last $2$ only differs from the others, followed by $1$ horizontal Dehn twist.

\section{Thurston unit ball for $C(n, p)$ with $p > 0$}

Since the link $L = C(n,p)$ with $p>0$ is admits an alternating link diagram as we defined, its Seifert surface $S$ is the minimal genus surface for $L$.

We describe a simple algorithm to compute the genus of $C(n,p)$ using the Seifert algorithm.
\begin{thm}\label{thm:seifert_alg}
    Let $L = C(n,p)$ with $p \geq 1$.
    Given arbitrary signs on $x = (\pm 1, \cdots, \pm 1) \in H_2(M(n,p), \partial \mathcal{N}(L))$, the Thurston norm of $x$ is $n$. 
\end{thm}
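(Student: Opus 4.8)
The plan is to compute the Thurston norm of an arbitrary sign vector $x = (\epsilon_1, \dots, \epsilon_n)$ with each $\epsilon_i \in \{\pm 1\}$ by building an explicit representative surface and then verifying it is minimal. First I would observe that, regardless of the signs, the class $x$ is represented by a surface $S_x$ obtained by taking the union $\bigcup_i \epsilon_i K_i$ of the twice-punctured disks (oriented according to the sign), and resolving the intersections; equivalently, $S_x$ is the Seifert surface obtained by applying Seifert's algorithm to a suitable diagram of $L$ in which the orientations of the components match the pattern dictated by $x$. Each such reorientation of $C(n,p)$ still gives an alternating diagram (here is where the hypothesis $p \geq 1$ with the positive-twist convention is used, so that the clasps are all of the alternating type), and so by Gabai's theorem (\cite{gabai1986genera}, Thm 4) the resulting Seifert surface has minimal genus in its class. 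Thus it suffices to (a) describe $S_x$ concretely and (b) count its Euler characteristic.

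For step (a)--(b) I would run the Seifert algorithm on the diagram of $C(n,p)$ with the reorientation corresponding to $x$. The key bookkeeping: each of the $p$ half-twists on $L_1$ contributes a band, each clasp between adjacent components $L_i, L_{i+1}$ contributes a pair of crossings, and the Seifert circles come in a predictable pattern. One counts the number $c$ of crossings and the number $s$ of Seifert circles, whence $\chi(S_x) = s - c$. Because reorienting a component $L_i$ flips which of its two clasps (with $L_{i-1}$ and with $L_{i+1}$) merges versus splits Seifert circles, the counts $s$ and $c$ both shift, but I expect the difference $c - s$ to be invariant and equal to $n$ for every sign choice — matching the known value $-\chi(S) = n$ for the all-but-one-positive fiber of Lemma \ref{lem:H2coord}. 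So the computation reduces to a careful but finite case-free tally; the circular symmetry of $C(n,p)$ (and the symmetry swapping a clasp shape for the other) can be used to reduce the number of genuinely distinct sign patterns one has to inspect, essentially to the number of ``sign-change blocks'' around the cycle.

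The main obstacle I anticipate is (a): getting the reoriented diagram and its Seifert picture exactly right, in particular tracking how a local orientation reversal at component $L_i$ changes a crossing from a ``smoothing that joins two circles'' to one that ``splits one circle into two'', and making sure the $p$ twist-crossings on $L_1$ behave consistently under reorientation of $L_1$ itself versus reorientation of its neighbors. I would handle this by drawing the local model at a single clasp and at a single half-twist, recording the $2\times 2$ possibilities for the incident orientations, and then assembling these local contributions around the $n$-cycle; the global count of Seifert circles is then $n$ (the ``long'' circles running around the chain) plus a correction term from the resolved clasps/twists that exactly cancels against the crossing count to leave $c - s = n$. Once this is established, Theorem \ref{thm:seifert_alg} follows immediately, since $-\chi(S_x) = n$ and $S_x$ is of minimal genus, hence $x(x) = n$ (noting $S_x$ has no sphere or disk components, so the $\max\{0, -\chi\}$ truncation is irrelevant).
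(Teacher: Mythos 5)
Your proposal is correct and follows essentially the same route as the paper: run Seifert's algorithm on the reoriented (still alternating) diagram, invoke Gabai's theorem to get minimality, and verify that the crossing/Seifert-circle count gives $-\chi = n$ independently of the signs (the paper's tally is $s = n+p$ circles against $c = 2n+p$ crossings, confirming your expectation that $c - s = n$). The only difference is that the paper actually carries out the clasp-by-clasp case analysis you outline as the remaining work.
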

\begin{proof}
    We will show this by performing the Seifert algorithm explicitly.
    Assume first that we arbitrarily fix the signs of each component of $x$.
    Note that the sign determines the orientation of each component of links.
    Let $L$ be the link with orientations corresponding to $x$.
    Imagine the link $L$ is drawn in a circular way such that the twisted one lies at the top (such as in \ref{fig:C(n,p)ex}).
    Label the twisted link as $L_1$ and continue the labeling clock-wise.
    
    We begin applying the algorithm, starting with the crossing involving $L_1$. 
    We get $p-1$ circle obtained at the half twists by Seifert algorithm, and two (unfinished) arcs on both sides of the circle.
    Now, focus on the right arc and the next link $L_2$.
    
    If signs of $L_1, L_2$ agrees, then the Seifert algorithm produces one circle and the arc is still not closed.
    Otherwise, Seifert algorithm makes the arc to be tied and ends up with a circle, and another arc will created on the right side of $L_2$.

    Inductively doing this, we get $n+p$ circles which does not depend on signs of links.
    As the number of linking in the diagram is equal to $2n+p$, we conclude the genus of $S$ is 
    \[
        \text{Genus of }S = \dfrac{2 + (2n+p) - (n+p) - n}{2} = 1
    \]
    By Gabai's theorem, it is the minimal genus surface of given link (with specific orientations we've fixed at the beginning).
    Therefore, $S$ is a minimal representative of the given $x$ and $||x|| = n$.
\end{proof}
The above lemma implies that the Thurston unit ball of $M \cong M(n,p)$ is an $n$ dimensional cocube.
\begin{cor}\label{cor:normballp}
    Thurston unit ball of $M(n,p)$ with $p \geq 1$ is an $n$ dimensional cocube with vertices $(\pm 1, 0, \cdots, 0), \cdots, (0, \cdots, 0, \pm 1)$.
\end{cor}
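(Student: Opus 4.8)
The plan is to upgrade Theorem~\ref{thm:seifert_alg} from a statement about the $2^n$ sign vectors $(\pm1,\dots,\pm1)$ to a statement identifying the entire unit ball. The key observation is that Theorem~\ref{thm:seifert_alg} says that every vertex of the cube $[-1,1]^n$ has Thurston norm exactly $n$; equivalently, each of the $2^n$ points $\frac1n(\pm1,\dots,\pm1)$ lies on the boundary of the Thurston unit ball $B(n,p)$. So first I would record that these $2^n$ points all lie in $\partial B(n,p)$, hence in the boundary polytope.

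Next, since $\partial B(n,p)$ is the boundary of a convex polytope, and the $2^n$ points $\frac1n(\epsilon_1,\dots,\epsilon_n)$ (with $\epsilon_i=\pm1$) are exactly the vertices of the scaled cube $\frac1n[-1,1]^n$, their convex hull is that cube, which is a full-dimensional body. I would argue that a full-dimensional convex body whose boundary is contained in the boundary of another convex body $B(n,p)$ must in fact equal $B(n,p)$: indeed $\frac1n[-1,1]^n \subseteq B(n,p)$ by convexity (the ball is convex and contains all the vertices), and if the containment were proper there would be a boundary point of the cube in the interior of $B(n,p)$, contradicting that every such boundary point — being a convex combination of the $2^n$ vertices, which all lie on a single supporting hyperplane of each facet — lies on $\partial B(n,p)$. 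More concretely: for each sign pattern $\sigma=(\sigma_1,\dots,\sigma_n)$ the $2^{n-1}$ vertices with $\sum \sigma_i x_i = 1/n$... wait, rather, the hyperplane $\sum_{i=1}^n \sigma_i x_i = 1$ is a supporting hyperplane of $B(n,p)$ because the vertex $\frac1n(\sigma_1,\dots,\sigma_n)$ achieves the value $1$ on it while (by the triangle inequality for the Thurston norm and $\|e_i\|=1$, which follows from the $p\ge1$ hyperbolicity or directly from Theorem~\ref{thm:seifert_alg} by taking a cube vertex and noting the norm is additive) no point of $B(n,p)$ exceeds it. Since the $2^n$ facets of the cocube are cut out by exactly these $2^n$ hyperplanes, and each is a supporting hyperplane of $B(n,p)$, the cocube contains $B(n,p)$; combined with $B(n,p)$ containing the cocube we are done.

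So the clean structure is: (i) by Theorem~\ref{thm:seifert_alg}, $\|(\pm1,\dots,\pm1)\|=n$, so the cocube with vertices $(\pm1,0,\dots,0),\dots,(0,\dots,0,\pm1)$ is inscribed in $B(n,p)$ (its vertices $e_i$, $-e_i$ lie in $B(n,p)$; in fact on $\partial B(n,p)$ since $\|e_i\|\le \frac1n\|(\,\underbrace{1,\dots,1}_{n}\,)\| + \cdots$ hmm — more simply, $\|e_i\|=1$ because the class $e_i=[K_i]$ is represented by the twice-punctured disk $K_i$, so $-\chi(K_i)=1$, giving $\|e_i\|\le1$, and $\ge1$ since it is nonzero and integral); (ii) for each sign vector $\sigma$, $\sum\sigma_i x_i \le 1$ on $B(n,p)$ because $\frac1n(\sigma_1,\dots,\sigma_n)$ has norm $1$ and lies on that hyperplane while the dual norm argument bounds the functional; hence $B(n,p)$ is contained in the cocube. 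Then $B(n,p)$ equals the cocube.

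The main obstacle is step (ii): making rigorous that the linear functional $\sum_i \sigma_i x_i$ is bounded by $1$ on all of $B(n,p)$. The naive "it equals $1$ at one boundary point" is not enough — one needs that this point lies on a supporting hyperplane, i.e., that the whole functional is $\le 1$. The right tool is Lemma~\ref{lem:eqn_of_fib_face}-style reasoning combined with the fact (Theorem~\ref{thm:seifert_alg}) that \emph{every} cube vertex has norm $n$: for a fixed $\sigma$, consider the simplex spanned by $\sigma_1 e_1,\dots,\sigma_n e_n$; each vertex $\sigma_i e_i$ has norm $1$, and the barycenter $\frac1n(\sigma_1,\dots,\sigma_n)$ also has norm $1$ by Theorem~\ref{thm:seifert_alg}, so by Lemma~\ref{lem:eqn_of_fib_face} (applied with $a_i = \sigma_i e_i$, or its obvious analogue) the hyperplane $\sum \sigma_i x_i = 1$ supports $B(n,p)$ along that simplex. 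Running over all $2^n$ choices of $\sigma$ produces the $2^n$ facet-defining inequalities of the cocube, and since the unit ball is contained in the intersection of all its supporting half-spaces, $B(n,p)$ is contained in the cocube. I would write the argument in this order: compute $\|e_i\|=1$; invoke Theorem~\ref{thm:seifert_alg} for the barycenters; apply the fibered-face/polytope lemma to get the $2^n$ supporting hyperplanes; conclude equality by double inclusion.
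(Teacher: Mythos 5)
Your final ``clean structure'' --- $\|\pm e_i\|=1$ from the twice-punctured disks, Theorem~\ref{thm:seifert_alg} applied at the barycenters $\frac{1}{n}(\sigma_1,\dots,\sigma_n)$, and convexity of the norm to promote each simplex $\mathrm{conv}(\sigma_1e_1,\dots,\sigma_ne_n)$ to a facet with supporting hyperplane $\sum_i\sigma_ix_i=1$ --- is correct and is essentially the paper's own proof, just with the inclusion $B(n,p)\subseteq\text{cocube}$ (via the $2^n$ supporting half-spaces) spelled out where the paper leaves it implicit in ``therefore the convex hull is exactly the unit ball.'' You should discard the first paragraph's detour through the inscribed cube $\frac{1}{n}[-1,1]^n$: only its vertices, not its whole boundary, are known to lie on the unit sphere, so the ``full-dimensional body with boundary in $\partial B$'' argument does not apply, as you yourself noticed mid-sentence before switching to the correct supporting-hyperplane argument.
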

\begin{proof}
    Let $e_i$ be a canonical basis of $Z^n \cong H_2(M(n,p), \partial M(n,p))$.
    Since $e_i$ is represented by a $2$ punctured disk, it lies on the Thurston unit ball.
    By the above theorem, we know that $(\pm 1/n, \cdots, \pm 1/n)$ is also on the unit ball.
    For each $(\pm 1/n, \cdots, \pm 1/n)$, it is a convex combination of the canonical basis (with suitable signs).
    Therefore we conclude that the convex hull of $\{\pm e_i\}_{1\leq i\leq n}$ is exactly the Thurston unit ball.
\end{proof}

\subsection{Detecting fibered faces}
Together with theorem \ref{thm:normball0} and corollary \ref{cor:normballp}, we know the shape of Thurston unit ball of $C(n,p)$ with $p\geq 0$.
Next, we want to investigate which faces are fibered.

We denote the surface $S(n,p)_x$ to be the surface obtained from the process in the theorem \ref{thm:seifert_alg} at $C(n,p)$ with given orientation $x = (\pm 1, \cdots, \pm 1)$.
It has genus equal to $1$ and $n$ punctures.

Choose one clasp.
If the orientation of the two links at that clasp coincides, then at the clasp there are new disks and one strip. 
The $2$ half twists will connect them.
$\emph{i.e.,}$ the Hopf band is Murasugi summed at that place.

If the orientation of two links do not coincide, then choose one of link and twist half so that their orientations coincide locally at the clasp.
In this case, there is a Hopf band Murasugi summed with one half twists, as in the Figure \ref{fig:desum}.

\begin{figure}[h]
    \centering
    \includegraphics[width=.9\textwidth]{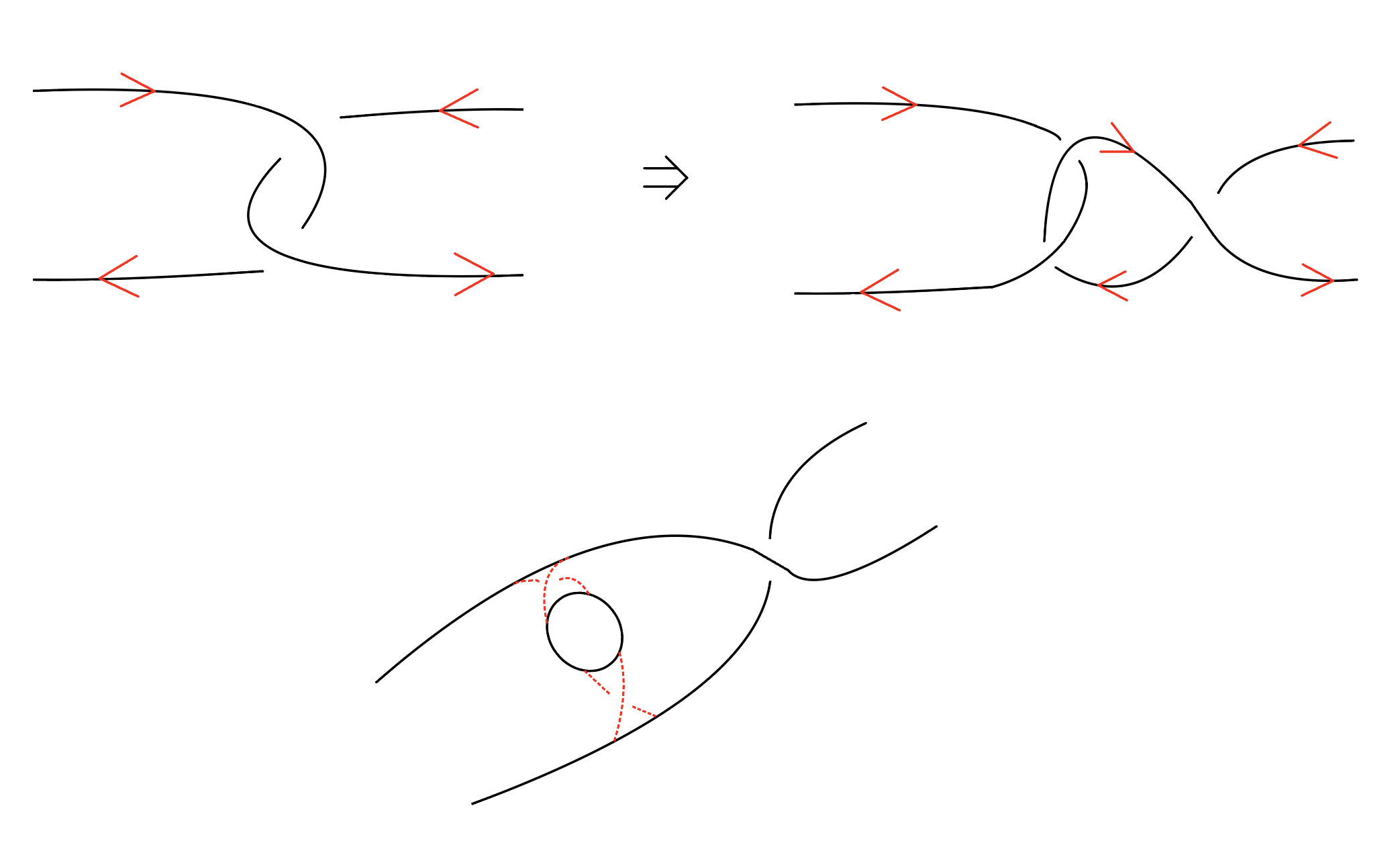}
    \caption{(Murasugi) desum the Hopf bands in the case of different orientations. Observe that the twist of result is compatible to the positive half twist with respect to the clasp shape.}
    \label{fig:desum}
\end{figure}

In the language of $H_2(M(n,p), \partial M(n,p))$, the sign change of the given orientation $x$ can be interpreted as a number of half twists after we (Murasugi) desum each vertical Hopf band. 
Also, we introduce a useful lemma in \cite{baader2016fibred}.
\begin{lem}[Example 3.1 in \cite{baader2016fibred}]\label{lem:torusknot}
    Suppose $L$ is a $(2, 2n)$-torus link with the orientation so that the Seifert surface is a full-twisted annulus. 
    Then $L$ is fibered if and only if $|n| = 1$.
    Each case is a positive/negative Hopf band.
\end{lem}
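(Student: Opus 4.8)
\textbf{Proof proposal for Lemma \ref{lem:torusknot} (Example 3.1 in \cite{baader2016fibred}).}

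The plan is to read off everything from the Seifert surface directly, since for a $(2,2n)$-torus link the Seifert surface obtained from the standard closed-braid diagram is the $n$-times full-twisted annulus $A_n$ (the annulus with $2n$ half-twists inserted). First I would fix the orientation on $L$ that makes the Seifert algorithm return this annulus: with that orientation the two strands of the $(2,2n)$-torus braid run in parallel, so the Seifert circles are two nested circles and reconnecting them at the $2n$ crossings gives an annulus carrying $2n$ half-twists, i.e. $A_n$, with $\chi(A_n)=0$ and one core curve $c$. By Gabai's theorem (\cite{gabai1986genera}, Thm 4) quoted above, $A_n$ is a minimal-genus Seifert surface, so by the equivalence of minimal Seifert genus and fiberedness (Theorem 4.1.10 in \cite{kawauchi1996survey}) the link $L$ is fibered \emph{if and only if} $A_n$ is a fiber surface.

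Next I would compute the monodromy candidate. The once-twisted annulus $A_{\pm 1}$ is exactly the positive/negative Hopf band, which by the Lemma on the monodromy of a Hopf band is fibered with monodromy a single right-/left-handed Dehn twist about its core. For general $n$ the annulus $A_n$ is the plumbing (Murasugi sum) of $|n|$ Hopf bands of the same sign along a chain, but all sharing the \emph{same} core curve; equivalently it is obtained from the Hopf band by adding $2(|n|-1)$ further half-twists, which changes the framing of the core curve but not the surface's homeomorphism type. So the natural candidate monodromy is the $n$-fold power of a Dehn twist about $c$. I would then argue that $A_n$ is a fiber precisely when this candidate is realized, and use the standard obstruction: a Seifert surface $S$ is a fiber iff the Seifert form $V$ is unimodular (equivalently the Alexander polynomial is monic of degree $-\chi(S)$, so here a unit). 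For the annulus $A_n$ the Seifert matrix with respect to the single generator $c$ of $H_1(A_n)$ is the $1\times 1$ matrix $(n)$ (the self-linking being the framing), and $\det(V - tV^T) = n(1-t)$ up to units; monicness of the Alexander polynomial forces $|n| = 1$. Conversely, when $|n| = 1$ we have literally a Hopf band, which is fibered by the Hopf band lemma. This establishes both directions and identifies the two fibered cases as the positive and negative Hopf bands.

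The main obstacle is the bookkeeping in the "only if" direction: one must be careful that the relevant surface really is the full-twisted annulus with $H_1$ of rank one generated by a single curve whose Seifert self-linking equals $n$ (sign conventions and the precise meaning of "full-twisted" matter), and that Gabai's minimality result applies so that one is allowed to test fiberedness on this particular surface rather than worrying about other Seifert surfaces of possibly smaller complexity. Once those conventions are pinned down, the Alexander-polynomial (or Seifert-form unimodularity) computation is a one-line determinant, and the "if" direction is immediate from the Hopf band lemma. I would present the argument in the order: (1) identify $S = A_n$ via Seifert's algorithm and record $\chi = 0$, one core curve $c$; (2) invoke Gabai to reduce fiberedness to "$A_n$ is a fiber"; (3) write the $1\times1$ Seifert matrix and compute $\Delta_L(t) \doteq n(1-t)$; (4) conclude $|n|=1$; (5) for $|n|=1$ recognize the Hopf band and quote its monodromy.
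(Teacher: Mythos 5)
The paper does not actually prove this lemma --- it is quoted verbatim as Example~3.1 of \cite{baader2016fibred} --- so your argument is being judged on its own. The main line of your proof is correct and is the standard one: with the anti-parallel orientation the Seifert algorithm on the alternating diagram returns the $n$-times full-twisted annulus $A_n$, Gabai's theorem makes it genus-minimizing, the Kawauchi equivalence (together with uniqueness of the fiber in a fibered class) reduces fiberedness of $L$ to $A_n$ being a fiber surface, and the $1\times 1$ Seifert matrix $(\pm n)$ of the core curve must then be unimodular, forcing $|n|=1$; the converse is the Hopf band. That chain of reasoning is sound and complete.

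Two corrections, though. First, your parenthetical claim that ``$A_n$ is the plumbing (Murasugi sum) of $|n|$ Hopf bands of the same sign \ldots all sharing the same core curve'' is false, and dangerously so: by Gabai's theorem (quoted in this paper) a Murasugi sum of fibered surfaces is fibered, so if $A_n$ really were such a plumbing it would be a fiber surface for every $n$, contradicting the very lemma you are proving. Stacking extra full twists into an annulus changes the framing of the core, not a plumbing decomposition; a genuine plumbing of $|n|$ Hopf bands has first Betti number $|n|$, not $1$. Delete that sentence --- nothing downstream uses it. Second, ``a Seifert surface $S$ is a fiber iff the Seifert form $V$ is unimodular'' is only true in the direction you actually use (fiber $\Rightarrow$ unimodular); the converse fails in general, so state it as a necessary condition. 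With those two edits the proof is a clean, self-contained justification of a fact the paper only cites.
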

Remark that the $(2,2n)$-torus link is fibered if the orientation of the two link components is parallel.
However in our case it cannot happen since their orientations are inherited by one link component so that they must be opposite.

We summarize all the observations in the following theorems, which split into $2$ cases.
Recall that $p$ is the number of half-twists on $L_1$.
\begin{thm}[$p$ is even]\label{thm:detecting_fiber_even}
    Let $p$ is nonnegative even integer.
    For a given orientation $x = (\pm1, \pm1, \cdots, \pm1)$, denote by $s$  the number of sign changes.
    \emph{i.e.,} $s = \sum_{i = 1}^n \delta_{-1,x_ix_{i+1}}$.
    
    Then $S(n,p)_x$ is fibered if and only if $(p,s) = (0,2), (2,0)$.
\end{thm}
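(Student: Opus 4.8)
The plan is to decompose the Seifert surface $S(n,p)_x$ into Murasugi summands and invoke Gabai's theorem that a Murasugi sum is fibered if and only if each summand is. From the discussion above, $S(n,p)_x$ is built from $n$ "vertical" pieces (one per clasp between consecutive components $L_i, L_{i+1}$) together with a "horizontal" piece coming from the $p$ half-twists on $L_1$. The key local observation is that each clasp contributes a $(2,2k)$-torus link summand whose twisting $k$ is governed by whether the orientations of $L_i$ and $L_{i+1}$ agree: if they agree the clasp contributes an untwisted/Hopf band (after the two crossings of the clasp are resolved compatibly), while if they disagree one must insert a compensating half-twist, changing the local twisting number by one. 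By Lemma \ref{lem:torusknot}, such a summand is fibered precisely when its twisting number is $\pm 1$, i.e.\ when it is a single positive or negative Hopf band; any other twisting number destroys fiberedness.

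First I would set up the bookkeeping carefully: track, clasp by clasp, what local piece appears as a function of the sign pattern $x$ and of $p$. The $p$ half-twists on $L_1$ contribute, in the Seifert surface, a band with $p$ twists; since $p$ is even, this is (up to desumming Hopf bands) a $(2,p)$-torus link summand, which by Lemma \ref{lem:torusknot} is fibered only when $p = \pm 2$, and for $p \geq 0$ even this forces $p \in \{0,2\}$. When $p = 2$ this summand is exactly one Hopf band and is fibered; every sign change at a clasp then introduces an extra half-twist there, turning a would-be Hopf band into a non-fibered torus-link summand, so we need $s = 0$. When $p = 0$ the horizontal twisting vanishes, but now the sign changes must arrange themselves so that all clasp summands are Hopf bands; a parity/counting argument (the total twisting around the chain is constrained, and the orientations on the two strands at each clasp are inherited from a single component so are forced to be antiparallel as noted after Lemma \ref{lem:torusknot}) shows this happens exactly when $s = 2$. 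The remaining cases — $p = 0$ with $s \neq 2$, $p = 2$ with $s \neq 0$, and $p \geq 4$ even with any $s$ — each produce at least one summand that is a $(2,2k)$-torus link with $|k| \geq 2$, hence non-fibered, so by Gabai's theorem $S(n,p)_x$ is not fibered.

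Concretely, the steps in order are: (1) exhibit $S(n,p)_x$ explicitly as an iterated Murasugi sum of $n$ clasp-pieces and one twist-piece, using the desumming picture of Figure \ref{fig:desum}; (2) identify each piece as a $(2,2k)$-torus-link Seifert surface and compute its twisting number $k$ in terms of $p$ and the local sign data; (3) apply Lemma \ref{lem:torusknot} to characterize which pieces are fibered; (4) combine via Gabai's Murasugi-sum theorem to get: $S(n,p)_x$ fibered $\iff$ every piece is a Hopf band $\iff$ the twisting numbers are all $\pm 1$; (5) translate the last condition into the arithmetic statement $(p,s) \in \{(0,2),(2,0)\}$, handling the parity constraint that makes $s$ even here. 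I expect step (5) — showing that "all clasp-pieces are Hopf bands" forces exactly $(0,2)$ or $(2,0)$, and in particular ruling out other sign patterns that might conceivably cancel twists around the cycle — to be the main obstacle, since it requires a global argument around the circular chain rather than a purely local one; the cyclic structure means the half-twists can in principle be traded between the horizontal piece and the clasps, and one must check that no such redistribution yields an all-Hopf-band decomposition outside the two claimed cases.
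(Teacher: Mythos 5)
There is a genuine gap, and it sits exactly where you predicted it would: in your step (5). Your decomposition assigns to each clasp its own $(2,2k)$-torus-link summand whose twisting $k$ depends on the local sign data, and then demands that \emph{every} summand be a Hopf band. But that is not how the sign changes enter. The point of the desumming picture (Figure \ref{fig:desum}) is that each clasp \emph{always} desums off a genuine Hopf band, regardless of whether the orientations agree; when they disagree, the price is one leftover half-twist that migrates onto the remaining central annulus rather than staying at the clasp. After desumming all $n$ vertical Hopf bands, one is left with a \emph{single} band carrying $p+s$ half-twists, whose boundary is a $(2,p+s)$-torus link with antiparallel strands, and Lemma \ref{lem:torusknot} is applied once, to that one piece. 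The fiberedness criterion is therefore the single global equation $p+s=2$ (with $s$ automatically even around the cycle), not a conjunction of local conditions.

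Your local framework gives the wrong answer in both directions. For $(p,s)=(0,2)$ it predicts two non-fibered clasp summands and hence non-fiberedness, contradicting the theorem (this is precisely Leininger's fiber $(1,\dots,1,-1)$, a Murasugi sum of $n+1$ Hopf bands). For $(p,s)=(0,0)$ it predicts that all summands are Hopf bands and the horizontal piece ``vanishes,'' suggesting fiberedness, whereas the correct residual piece is a $(2,0)$ untwisted annulus, which is not fibered. The ``parity/counting argument'' you invoke to force $s=2$ when $p=0$ is not supplied, and it cannot be supplied within your setup, because the obstruction is not local to any clasp. The fix is to replace your step (2) by the observation that the half-twists from sign changes accumulate on the central band together with the $p$ twists from $L_1$; with that change, steps (3)--(5) collapse to a one-line application of Lemma \ref{lem:torusknot} and Gabai's theorem, which is the paper's argument.
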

\begin{proof}
    As in the observation, we will desum whenever there arise a Murasugi sum of Hopf bands.
    Consequently, it remains the twisted band whose boundary is a $(2, p+s)$-torus knot. 
    By the lemma \ref{lem:torusknot}, it is fibered if and only if $p+s = 2$, it finishes the proof.
\end{proof}
\begin{thm}[$p$ is odd]\label{thm:detecting_fiber_odd}
    Let $p$ is non negative odd integer and $x, s$ is defined same as in the above.
    Then $S(n,p)_x$ is fibered if and only if $(p,s) = (1,0)$.
\end{thm}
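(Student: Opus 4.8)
The plan is to mimic the proof of Theorem~\ref{thm:detecting_fiber_even} almost verbatim, adjusting only the arithmetic that comes from the extra half-twist when $p$ is odd. As in the even case, I would first fix an orientation $x = (\pm1,\dots,\pm1)$ and run the Seifert algorithm from Theorem~\ref{thm:seifert_alg} to produce the surface $S(n,p)_x$ of genus $1$ with $n$ boundary components. The key structural observation, already recorded in the discussion preceding Lemma~\ref{lem:torusknot}, is that at each of the $n$ clasps one of two things happens: if the two adjacent link components carry coherent orientations, a Hopf band is Murasugi-summed on in the standard way; if they carry incoherent orientations, a Hopf band is Murasugi-summed on but only after performing a single compensating half-twist (Figure~\ref{fig:desum}). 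I would Murasugi-desum every one of these $n$ Hopf bands. By Gabai's theorems (the two theorems of Gabai quoted in the preliminaries), $S(n,p)_x$ is fibered if and only if the surface remaining after all the desumming is fibered.

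The next step is to identify that remaining surface. Each incoherent clasp contributes one half-twist to a ``central'' twisted band when its Hopf band is desummed, and the component $L_1$ itself contributes $p$ half-twists built in by hand; so after desumming all $n$ Hopf bands, what is left is a twisted annulus whose boundary is a $(2, p+s)$-torus link, exactly as in the even case, where $s = \sum_{i=1}^n \delta_{-1,x_ix_{i+1}}$ is the number of sign changes. Now Lemma~\ref{lem:torusknot} says this is fibered precisely when $p+s = \pm 2$, i.e. $|p+s|=1$ gives the Hopf band — wait, more carefully: the lemma states the $(2,2n)$-torus link is fibered iff $|n|=1$, i.e. the twisted band has $\pm2$ half-twists. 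So I need $p + s \in \{2, -2\}$. Since $p\ge 0$ and, crucially, $p$ is odd, $p+s$ has the parity of $s$; and $s$ is always even (it counts sign changes going around a cycle, so $\sum x_ix_{i+1}$-type parity forces $s$ even). Hence $p+s$ is odd, and $p+s = \pm 2$ is impossible. This would seem to kill \emph{all} cases — so the resolution of the $(p,s)=(1,0)$ exception must be that when $p=1$ the single half-twist on $L_1$ is \emph{itself} absorbed into the desumming rather than surviving into the central band, or equivalently the $p=1$ band together with the $s=0$ coherent clasps assembles into a genuine Hopf band. I would therefore treat $p=1$ separately: check directly (via the explicit Seifert picture, as in Theorem~\ref{thm:seifert_alg}) that $S(n,1)_x$ with $s=0$ is the plumbing of $n$ Hopf bands plus a single remaining Hopf band coming from the lone half-twist, hence fibered by Gabai; and for $s \ge 2$ show the leftover band is a $(2, 1+s)$-torus link with $1+s \ge 3$ odd, hence not fibered by Lemma~\ref{lem:torusknot}.

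For the ``only if'' direction in the cases $p \ge 3$ odd: after desumming, the leftover is a $(2, p+s)$-torus link with $p+s \ge 3$, and by Lemma~\ref{lem:torusknot} this is not fibered (it is only fibered for $\pm2$ twists), so $S(n,p)_x$ is not fibered for any $x$. This handles every $p\ge 3$ at once. The only remaining gap is to be careful that the orientations $x$ are genuinely constrained to make the two strands at every clasp \emph{anti}-parallel — the remark after Lemma~\ref{lem:torusknot} notes this is forced — so the ``parallel, hence fibered'' alternative for $(2,2n)$-torus links never arises, and the count $p+s$ is the correct one in every clasp.

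The main obstacle, and the step I expect to need the most care, is pinning down exactly how the $p$ half-twists on $L_1$ interact with the desumming bookkeeping when $p$ is odd — i.e. justifying rigorously that $(p,s)=(1,0)$ really does yield an honest Hopf band (and not, say, a trivial annulus or a $(2,1)$-torus ``link'' which is just an unknot bounding a disk), while $(p,s)=(1, s)$ with $s\ge 2$ and all $(p,s)$ with $p\ge 3$ leave a non-fibered torus link. Everything else is a transcription of the even-$p$ argument with the parity observation ``$s$ is always even, hence $p+s$ is odd when $p$ is odd, so $p+s=\pm2$ is unattainable except through the degenerate $p=1$ collapse'' doing the real work.
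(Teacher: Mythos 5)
Your overall strategy (desum the Hopf bands at the clasps and reduce to the fiberedness of a single leftover twisted band, then invoke Lemma~\ref{lem:torusknot}) is the same as the paper's, but there is a genuine gap in the arithmetic of the twist count, and your two patches for it do not work. The paper's point is precisely that when $p$ is odd the leftover band carries $p+s+1$ half-twists, not $p+s$: because $L_1$ has an odd number of half-twists, the leftmost and rightmost strands of the top band do not match up coherently, and the desumming at the top contributes one extra half-twist. Your own parity observation ($s$ is always even, so $p+s$ is odd when $p$ is odd) should have been read as evidence that the count $p+s$ is off by one — a band with an odd number of half-twists is a M\"obius band and cannot sit inside the oriented Seifert surface — rather than as evidence that ``all cases are killed.'' With the corrected count $p+s+1$, Lemma~\ref{lem:torusknot} applies cleanly: fibered iff $p+s+1=2$, whose only solution with $p$ odd nonnegative and $s\geq 0$ even is $(p,s)=(1,0)$. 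No special ``absorption'' mechanism for $p=1$ is needed.

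Concretely, the two places where your argument fails as written: (i) for $(p,s)=(1,0)$ you only conjecture that the lone half-twist ``is absorbed into the desumming'' and defer the verification; the actual reason the surface is fibered is that the leftover band has $1+0+1=2$ half-twists, i.e.\ it is an honest Hopf band. (ii) For $p\geq 3$ you claim the leftover is a $(2,p+s)$-torus link with $p+s\geq 3$ odd and conclude non-fiberedness from Lemma~\ref{lem:torusknot}; but that lemma only concerns $(2,2n)$-torus links bounding a full-twisted annulus, and says nothing about odd twist numbers — indeed $(2,m)$-torus \emph{knots} with $m$ odd (e.g.\ the trefoil) are fibered, so the lemma cannot be cited to rule them out. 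The correct statement is that the leftover band has $p+s+1\geq 4$ half-twists, which Lemma~\ref{lem:torusknot} does exclude.
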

\begin{proof}
    Only difference from the case $p$ is even is the last desumming process.
    Since there are odd number of half twists, the leftmost and rightmost part of the top link $L_1$ do not coincides.
    Hence after the desumming process, the remaining part is a twisted band whose boundary is a $(2, p+s+1)$-torus knot.
    Again by the lemma \ref{lem:torusknot}, it is fibered if and only if $p+s+1 = 2$ and $(p,s) = (1,0)$ is the only solution. 
\end{proof}

Note that $x = \frac{1}{n}(\pm1, \cdots, \pm1)$ is the barycenter of $\pm e_i$'s, the fiberedness with such orientation implies the fiberedness of faces.
Therefore, together with the theroem in \cite{leininger2002surgeries} we have the following corollary.
\begin{cor}[Fiberedness of $C(n,p)$]
    The link $C(n,p)$ is fibered if and only if $-n-2 \leq p \leq 2$.
    Moreover, every face of the Thurston unit ball of $C(n,0)$ is a fibered face. In contrast there are only $2$ fibered faces of $C(n, 1)$ and $C(n,2)$, which contain $(1,1,\cdots,1)$ and its antipodal point.
\end{cor}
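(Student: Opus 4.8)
The plan is to assemble this statement from results already proved, pushing everything to the range $p\ge 0$ by the isotopy $C(n,p)\cong C(n,-p-n)$ of the Remark in \S3 and then reading the two \emph{moreover} clauses off the facet structure of the Thurston ball; throughout I take $n\ge 3$, so that Leininger's exceptional pair $(2,-1)$ never intervenes.

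First I would treat the range $p\ge 1$. By Corollary~\ref{cor:normballp} the Thurston ball of $M(n,p)$ is the $n$-dimensional cross-polytope, and its facets are exactly the simplices $\operatorname{conv}\{x_1e_1,\dots,x_ne_n\}$ indexed by sign vectors $x=(x_1,\dots,x_n)\in\{\pm 1\}^n$, the facet of $x$ having barycenter $\tfrac1n x$. By Theorem~\ref{thm:seifert_alg} the Seifert surface $S(n,p)_x$ is connected (it is produced by Seifert's algorithm from the connected diagram of $C(n,p)$) and realizes the Thurston norm of the class $x$; since a fibered cohomology class has a unique norm-minimizing representative up to isotopy, namely its fiber (Thurston, \cite{william1986norm}), the facet of $x$ is a fibered face precisely when $S(n,p)_x$ is a fiber. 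Hence $C(n,p)$ is a fibered link if and only if $S(n,p)_x$ is a fiber for some sign vector $x$, and by Theorems~\ref{thm:detecting_fiber_even} and~\ref{thm:detecting_fiber_odd} this occurs if and only if $p\in\{1,2\}$.

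Next I would fill in the remaining ranges. For $-n\le p\le 0$ the link $C(n,p)$ is fibered by Leininger's theorem \cite{leininger2002surgeries} (and, for $p=0$, also by Theorem~\ref{thm:normball0}). For $p<-n$ we have $C(n,p)\cong C(n,-p-n)$ with $-p-n\ge 1$, so by the previous step $C(n,p)$ is fibered if and only if $-p-n\in\{1,2\}$, i.e.\ $p\in\{-n-1,-n-2\}$, and it is non-fibered once $p\le -n-3$; and $C(n,p)$ is non-fibered for $p\ge 3$, again by the previous step. Collecting the cases shows $C(n,p)$ is fibered exactly when $-n-2\le p\le 2$.

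It remains to handle the \emph{moreover} clauses. That every face of the Thurston ball of $C(n,0)$ is a fibered face is exactly the last sentence of Theorem~\ref{thm:normball0}. For $p=1$ and $p=2$, the facet/sign-vector correspondence above together with Theorems~\ref{thm:detecting_fiber_even} and~\ref{thm:detecting_fiber_odd} shows that the facet of $x$ is fibered if and only if the number $s$ of sign changes of $x$ vanishes, forcing $x=\pm(1,\dots,1)$; so there are exactly two fibered faces, namely the one through $\tfrac1n(1,\dots,1)$ and its antipode. The only genuinely nonformal ingredient, and the step I expect to cost the most care, is the equivalence ``$C(n,p)$ is fibered $\iff$ some $S(n,p)_x$ is a fiber'' for $p\ge1$: this uses both the cross-polytope description of the ball (so that facets biject with sign vectors and their barycenters are the $\tfrac1n x$) and the uniqueness of the norm-minimizing surface in a fibered class, applied to the connected, norm-minimizing surface $S(n,p)_x$. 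Everything else is bookkeeping with the symmetry $p\leftrightarrow -p-n$ and citations of the results above.
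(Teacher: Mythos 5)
Your proposal is correct and follows essentially the same route as the paper: Leininger's theorem for $-n\le p\le 0$, the sign-change criteria of Theorems~\ref{thm:detecting_fiber_even} and~\ref{thm:detecting_fiber_odd} applied at the barycenters $\tfrac1n(\pm1,\dots,\pm1)$ of the cross-polytope facets for $p\ge 1$, and the facet description from Corollary~\ref{cor:normballp}. You actually spell out two steps the paper leaves implicit --- the reduction of the range $p<-n$ via the isotopy $C(n,p)\cong C(n,-p-n)$, and the use of uniqueness of norm-minimizing representatives in a fibered class to get the ``only if'' direction --- which is a welcome tightening rather than a different method.
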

\begin{proof}
    For $p = 0$, by theorem \ref{thm:detecting_fiber_even} $S(n, 0)_x$ is fibered if and only if $x$ has only one $-1$ entry and the others are all $1$ or its antipodal points.
    By corollary \ref{cor:sub_facets_in_C(n,0)}, each $x_i$ is in the distinct fibered face, hence every face of Thurston unit ball for $C(n, 0)$ is fibered.

    Suppose $p = 1$ or $p= 2$.
    By theorem \ref{thm:detecting_fiber_even} and \ref{thm:detecting_fiber_odd}, $S(n, p)_x$ is fibered if and only if $x = (1, \cdots, 1)$ or $(-1, \cdots, -1)$.
    By corollary \ref{cor:normballp}, there are only $2$ faces whose supporting planes are $\sum_{i = 1}^n x_i = \pm 1$.
\end{proof}

\section{Thurston unit ball for $C(n, p)$ with $p < 0$}

In \cite{leininger2002surgeries} Leininger proved that $C(n, -p)$ is fibered for $0 \leq p \leq n$.
In this section we investigate what is the shape of the Thurston unit ball for the complements of $n$-chained links with negative twists.
Suppose we have a $n$-chain link $C(n, -p)$ and that we have labeled each link components as before.
Note that we can untwist all the negative twists.
After resolving the negative twists on $L_1$, our link became a chain link with no twists.
However the shape of clasps may have changed during this process.
We re-assign the orientations of each $L_i$'s in a circular way after resolving all the twists on $L_1$, and, for each $i = 1,\cdots,n$, we define $e_i \in H_2(M(c, p), \partial M(c, p); \mathbb{Z})$ to be the twice punctured disk bounded and oriented by $L_i$. 
See the figure \ref{fig:C(5,-2)}.

Each link component has $2$ clasps, which may now be $+$ clasp or $-$ clasp.
Since the $-$ clasp only arise whenever one negative twist is resolved, the number of $-$ clasps is equal to $|p|$ after resolving all twists.
We define the shape vector of $C(n,p)$.
\begin{defn}
    Suppose $n \geq 4$ and $-\floor{n/2} \leq p < 0$.
    The shape vector for $C(n, p)$ is an $n$-tuple and each entry is either $+$ or $-$.
    The $i$'th entry records the shape of clasp between the link components $L_i$ and $ L_{i+1}$.
    For each $L_i$, we will say that $L_i$ has clasp shape $(\alpha, \beta)$ with $\alpha, \beta \in \{+, -\}$ if the clasp between $L_{i_1}$ and $L_i$ is an $\alpha$ shape clasp and the clasp between $L_{i}$ and $L_{i+1}$ is a $\beta$ shape clasp.
\end{defn}
Note that the number of $-$ entries in any shape vector is equal to $|p|$.

Suppose $L_i$ has $-$ shape with $L_{i-1}$ and $+$ shape with $L_{i+1}$.
Here are $2$ isotopic operations we can perform on each $L_i$.
\begin{figure}
    \centering
    \includegraphics[width = .8\textwidth]{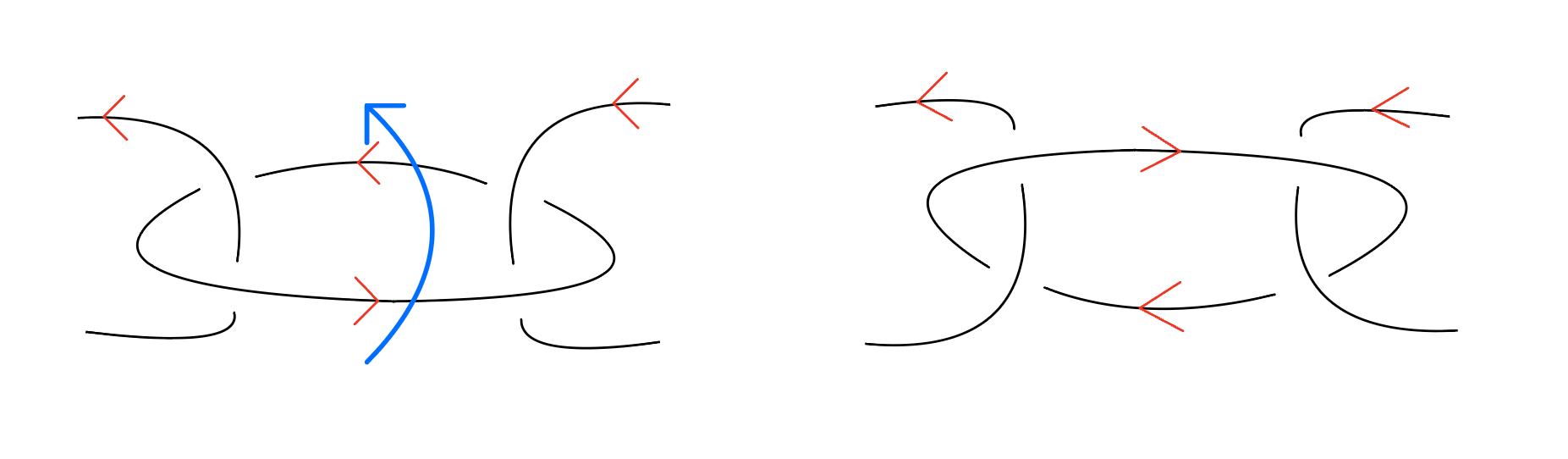}
    \caption{Flip}
    \label{fig:flip}
\end{figure}
\begin{figure}
    \centering
    \includegraphics[width = .8\textwidth]{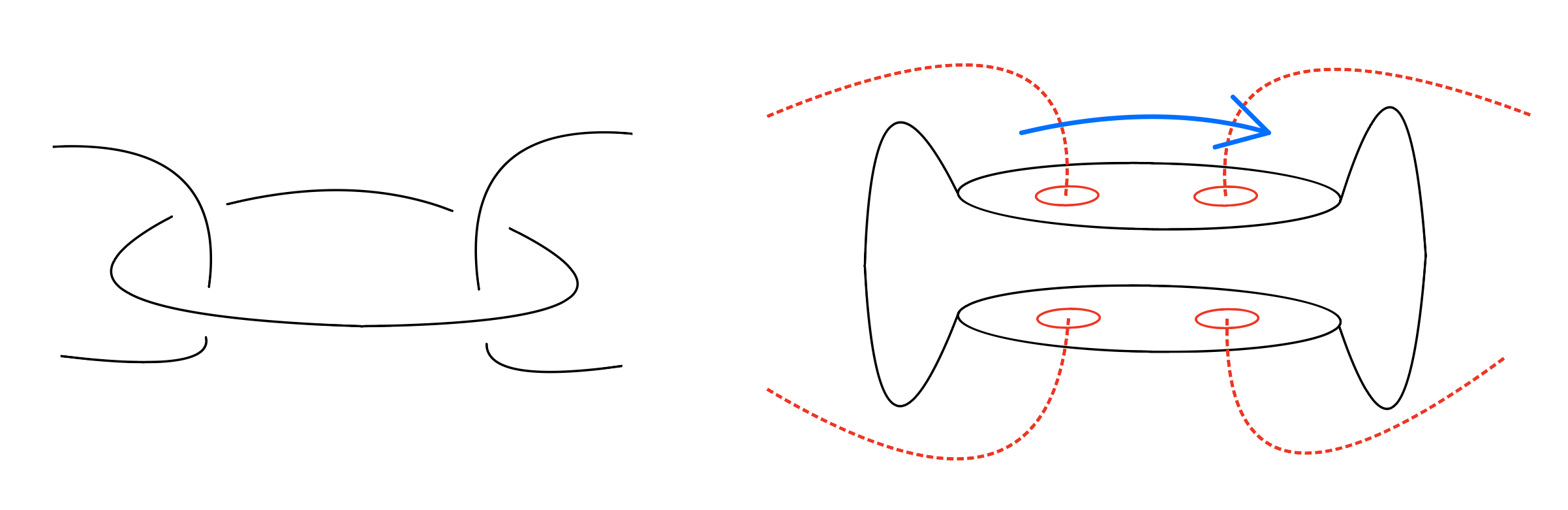}
    \caption{Full twist.}
    \label{fig:full_twist}
\end{figure}

\begin{enumerate}
    \item A \textit{flip}: we flip $L_i$ so that the $+$ clasp changes to a $-$ clasp and vice versa.
    Hence a flip exchanges the $(i-1)$'th entry and $i$'th entry of the shape vector.
    \item A \textit{full twists}:  
    There is a twice punctured disc $D$ bounded by $L_i$ whose second homology class is $e_i$.
    Cut $M(n, p)$ at $D$.
    At the slice, there are $2$ copies of $D$, say $D_1, D_2$.
    Now re-glue $D_1$ and $D_2$ after twisting either $D_1$ or $D_2$ by $360$ degrees.
\end{enumerate}

Now we have the following proposition.
\begin{prop}\label{prop:new_vertex_candidate}
    Let $n\geq 4$.
    Suppose $L_i$ admits $(+,-)$ or $(-,+)$ clasp shape. 
    Then $(\underbrace{1, \cdots, 1}_{n}) - e_i \in H_2(M(c, p), \partial M(c, p); \mathbb{Z})$ admits a $(n-1)$-punctured sphere as representative, and its Thurston norm is thus equal to $n-3$.
\end{prop}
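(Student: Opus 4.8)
# Proof Proposal

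\textbf{Overall approach.} The plan is to produce an explicit surface representing the class $(1,\dots,1) - e_i$ and compute its Euler characteristic, then invoke the Thurston-norm machinery already developed to conclude that this representative is minimal. Since every face of the Thurston unit ball of $C(n,0)$ is fibered (Theorem \ref{thm:normball0}) and the operations of flipping and full-twisting are isotopies of the ambient manifold, I would work in a convenient model of $M(n,p)$ obtained from $M(n,0)$ by applying the isotopies that realize the given shape vector, so that $L_i$ sits with a $(+,-)$ or $(-,+)$ clasp pattern as hypothesized.

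\textbf{Key steps, in order.} First I would fix notation: after resolving all negative twists, $M(n,p)$ is the complement of a chain link with a prescribed shape vector having exactly $|p|$ minus-entries, and $e_1,\dots,e_n$ is the basis of twice-punctured disks as set up just before the statement. Second, I would describe the class $(1,\dots,1) - e_i$ concretely: the coefficient $1$ on $e_j$ for $j \ne i$ and $0$ on $e_i$. Using the boundary formula of Lemma \ref{lem:gen_bdd_formula}, the number of boundary components of a minimal representative $S$ is $\sum_{j=1}^n \gcd(a_{j-1}+a_{j+1}, a_j)$ with $a_j = 1$ for $j\ne i$ and $a_i = 0$; I would compute this sum, expecting it to give $n-1$ (the torus $T_i$ contributes $\gcd(2,0)=2$ but the class has $a_i = 0$, so care is needed — more likely $T_i$ contributes $0$ and each of the other $n-1$ tori contributes $1$). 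Third, and this is the geometric heart: I would exhibit an $(n-1)$-punctured sphere realizing the class. The idea is that with a $(+,-)$ or $(-,+)$ clasp at $L_i$, the twice-punctured disk $K_i$ can be traded against the union of the other disks to cancel a genus handle — concretely, one takes the Seifert-type surface built by Murasugi summing the annuli/disks associated to the chain, but because the two clasps at $L_i$ have opposite sign the local contribution at $L_i$ is a band that can be absorbed, dropping the genus from $1$ to $0$. I would draw this surface explicitly (one picture, in the style of Figure \ref{fig:desum}) and verify it is a planar surface with $n-1$ boundary curves, hence Euler characteristic $2 - (n-1) = 3-n$, so $-\chi = n-3$. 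Fourth, I would argue minimality: the class $(1,\dots,1)-e_i$ lies in (the cone over) a fibered face of $B_n$ — this is where I'd use that the face equations are known from Corollary \ref{cor:sub_facets_in_C(n,0)} and Lemma \ref{lem:eqn_of_fib_face}, so that evaluating the supporting linear functional at $(1,\dots,1)-e_i$ gives exactly $n-3$, matching $-\chi$ of the constructed surface; therefore the surface is norm-minimizing and $x\big((1,\dots,1)-e_i\big) = n-3$.

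\textbf{Main obstacle.} The hard part will be the explicit construction of the planar (genus-zero) representative and checking that the opposite-clasp condition $(+,-)$ or $(-,+)$ is exactly what makes the genus drop — in the $(+,+)$ or $(-,-)$ case one expects instead to stay at genus $1$ (so $-\chi = n-1$, the cocube value), and isolating precisely why the sign mismatch kills the handle requires a careful local analysis of how the Murasugi summands fit together near $L_i$, analogous to the desumming argument in Theorems \ref{thm:detecting_fiber_even} and \ref{thm:detecting_fiber_odd} but now tracking Euler characteristic rather than fiberedness. A secondary subtlety is making sure the boundary-component count from Lemma \ref{lem:gen_bdd_formula} is applied correctly when one coordinate vanishes, since that lemma is stated for points in the cone $\mathcal{C}$ and one must check $(1,\dots,1)-e_i$ genuinely lies there (or adapt the count directly from the explicit surface, which I would do as a consistency check).
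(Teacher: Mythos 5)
Your construction idea is essentially the right one — the hypothesis that the two clasps at $L_i$ have opposite signs is exactly what makes $L_i$ have zero total linking number with the oriented sum of the other $n-1$ disks, so that the representative of $(1,\dots,1)-e_i$ can be pushed off $L_i$ and the genus drops to zero. The paper realizes this concretely by performing a full twist along the twice-punctured disk bounded by $L_i$: this makes $L_{i-1}$ and $L_{i+1}$ clasp each other directly, the components other than $L_i$ become an $(n-1)$-chain link, and the all-positive Seifert surface of that chain is an $(n-1)$-punctured sphere disjoint from $L_i$. That is cleaner than "absorbing a band," but it is the same mechanism, and your identification of the sign mismatch as the reason the handle dies is correct.

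The genuine gap is in your minimality step. You propose to certify $\|(1,\dots,1)-e_i\|=n-3$ by evaluating a supporting linear functional of a known fibered face, citing Corollary \ref{cor:sub_facets_in_C(n,0)} and Lemma \ref{lem:eqn_of_fib_face}. Those results describe the unit ball of $C(n,0)$, whereas this proposition lives in the $p<0$ section, where the unit ball is precisely what is being determined (and is only conjectural); using its face equations here is circular. Worse, the analogue for $C(n,0)$ gives the wrong answer: there every clasp is $+$, the hypothesis of the proposition fails, and $(1,\dots,1)-e_i$ has norm $n-1$ (it meets the boundary of the cocube, not of a simplex). The correct and elementary minimality argument is the one the paper uses implicitly: once you have a genus-zero representative whose boundary realizes the minimal number of circles forced by $\partial_*$ of the class, it is automatically norm-minimizing, since $-\chi = 2g-2+b$ on each component. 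Relatedly, your application of Lemma \ref{lem:gen_bdd_formula} would give $\gcd(a_{i-1}+a_{i+1},a_i)=\gcd(2,0)=2$ circles on $T_i$ and hence $n+1$ boundary components in total, not $n-1$; the resolution is not merely "care with a vanishing coordinate" but that the signs in the boundary map depend on the linking numbers, hence on the clasp shapes, and with a $(+,-)$ or $(-,+)$ shape at $L_i$ the two meridian contributions on $T_i$ cancel, so the boundary class there vanishes. You guessed the right count but for the wrong reason, and the lemma as stated should not be the primary route to it.
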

\begin{proof}
    After performing a full twist on $L_i$, the $2$ consecutive link components $L_{i-1}$ and $L_{i+1}$ form a clasp (its shape depends on the direction of the full twist).
    If we forget about $L_i$, then the other link components became a chain link of $n-1$ components.
    Apply Seifert algorithm with all positive orientations, the Seifert surface $S$ is a $n-1$ punctured sphere.
    Since the surface $S$ does not admit $L_i$ as its boundary component, $S$ is an embedded surface in $M(n, p)$.
    Since it has no genus, this is the minimal Thurston norm representative of the homology class.
\end{proof}

Start with $C(n, -1)$.
After untwisting once, we obtain an $n$-chained link with shape vector $(-,+,\cdots,+)$.
By proposition \ref{prop:new_vertex_candidate}, $\frac{1}{n-3}(1, 0, 1, \cdots, 1)$ is a point of Thurston norm equal to $1$.
If we flip $L_2$, then the shape vector changes to $(+, -, +, \cdots, +)$.
Now we can perform a full twists on $L_3$ and then, using the same method as in the proof of proposition \ref{prop:new_vertex_candidate}, we can deduce that the point $(1,-1,0,1, \cdots,1)$ also is on the unit sphere.
Note that we have a $-1$ on the second entry this time.
Simply put, as the $0$  coordinate moves one step on the right, it also introduces a minus sign.
Hence, by this process, we get a total of $2n$ points on the unit sphere, namely
\[
    (1, 0, 1, \cdots, 1), (1, -1, 0, 1, \cdots, 1), \cdots, (1,-1, \cdots, -1, 0) \text{ and } (0, -1, \cdots, -1)
\]
and their antipodal points.

Now consider $C(n, -2)$.
The shape vector contains two $-$ entries.
We can perform a full twist on the two link components $L_i$ and $L_j$ whose clasps on both sides are different, unless $L_i$ and $L_j$ are consecutive link components.
In this case, we get $2$ zero entries in the new points and hence it represents a sphere with $n-2$ punctures.
Therefore it has Thusrton norm $n-4$. 
See the figure \ref{fig:C(5,-2)} and \ref{fig:C(5,-2)_fulltwist} also. 

\begin{figure}
    \centering
    \includegraphics[width = .4\textwidth]{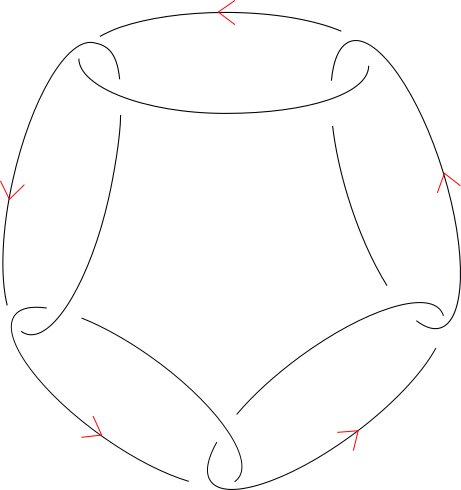}
    \caption{$C(5,-2)$. Note that the orientation of each link component is re-assigned in a circular way. Starting from the top link component, we label the components $x_1, x_2, \cdots, x_5$, clockwise.}
    \label{fig:C(5,-2)}
\end{figure}

\begin{figure}
    \centering
    \includegraphics[width = .4\textwidth]{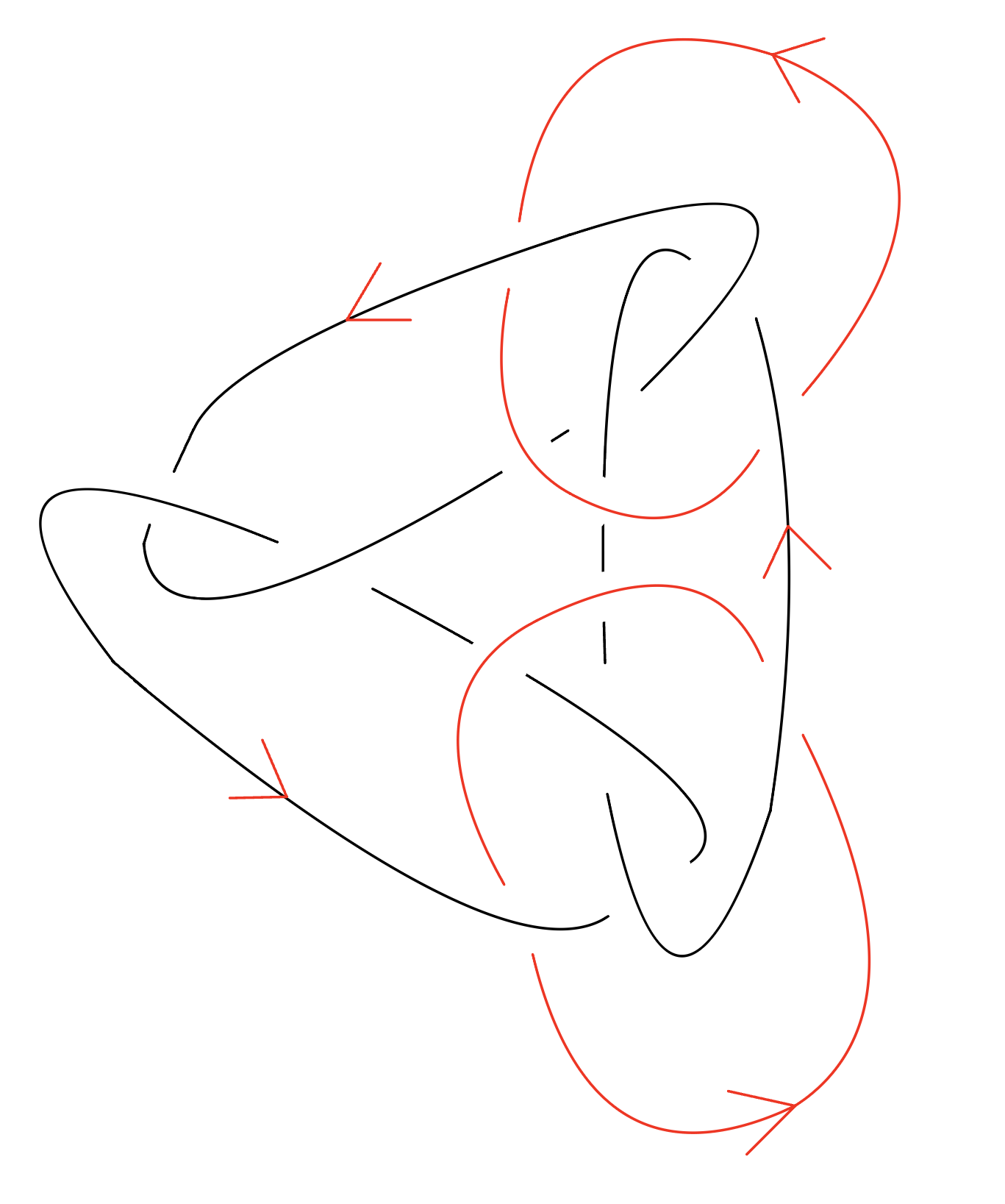}
    \caption{After two full twists, one on $L_1$ and one on $L_3$, we get the above link. Note that $(0,1,0,1,1)$ is represented by $S_{0,3}$, which is obtained by oriented sum of the $3$ disks bounded by $L_2, L_4, L_5$.}
    \label{fig:C(5,-2)_fulltwist}
\end{figure}

Therefore, we get the following corollary.
\begin{cor}\label{cor:vertex_candidates}
    By following the process described above, we obtain a set $V := V(n, p)$ of points that lie on the unit sphere of Thurston norm.
    Every points in $V$ is obtained by flipping the link components and taking full twists.
    The flip operation slides the $0$ entry to the next coordinate. 
    All points $(x_1, \cdots, x_n)$ in $V$ have the following properties.
    \begin{enumerate}
        \item There is an integer $N$ such that $N\times x_i \in \{-1, 0, 1\}$.
        Such $N$ corresponds to its Thurston norm.
        \item No two consecutive entries are equal to $0$.
    \end{enumerate}
    Let $B := B(n, p)$ be the convex hull of $V\cup \{\pm e_i\}$. 
    Then $B$ is contained in the unit Thurston norm ball.
\end{cor}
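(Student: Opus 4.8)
The plan is to obtain the inclusion from convexity of the Thurston norm ball together with the fact that each of the finitely many points generating $B$ already lies in the closed unit ball. Since Thurston proved the unit norm ball is a polytope, it is in particular convex; hence once we know $V\cup\{\pm e_i\}\subseteq\{\,x:\|x\|\le 1\,\}$, the inclusion $B=\mathrm{conv}(V\cup\{\pm e_i\})\subseteq\{\,x:\|x\|\le 1\,\}$ is automatic. So the real work is to record why the generating points have norm at most $1$ and to check the bookkeeping assertions (1) and (2).

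For $\pm e_i$ this is immediate: $e_i$ is represented by the twice-punctured disk $K_i$ bounded by $L_i$, which is properly embedded in $M(n,p)$ with $-\chi(K_i)=1$, so $\|\pm e_i\|=1$. For a point $v\in V$ I would argue as follows. Set $N=n-|p|-2$; by construction $v=\tfrac1N w$, where $w$ is obtained from $(1,\dots,1)$ by a sequence of flips and full twists. A full twist on a component of mixed clasp shape $(+,-)$ or $(-,+)$ replaces the corresponding coordinate of the homology class by $0$; performing $|p|$ such twists so as to create $|p|$ zero coordinates in pairwise non-adjacent positions, and then forgetting the twisted components, leaves — exactly as in the proof of Proposition \ref{prop:new_vertex_candidate} — an $(n-|p|)$-chain link whose Seifert surface $S$, taken with the orientations induced by $w$, is an $(n-|p|)$-punctured sphere. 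Since $S$ misses the forgotten components it is properly embedded in $M(n,p)$, and it has genus $0$ with $-\chi(S)=n-|p|-2=N$; putting the chain sublink in its minimally twisted alternating form and applying Gabai's theorem \cite{gabai1986genera} on Seifert's algorithm shows $S$ realizes the Thurston norm of its class, so $\|w\|=N$ and $\|v\|=1$. The same description yields (1) and (2): the $|p|$ full twists produce exactly $|p|$ zero coordinates, in pairwise non-adjacent positions, while a flip of $L_i$ transposes the $(i-1)$-st and $i$-th entries of the shape vector, i.e.\ slides a single zero coordinate one step over — introducing the sign seen in the explicit computations for $C(n,-1)$ and $C(n,-2)$ — and never carries a zero onto or past another. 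Hence no two consecutive coordinates of $v$ vanish, and $Nv\in\{-1,0,1\}^n$ with $N=\|Nv\|$.

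Granting all this, the corollary follows at once by convexity, as in the first paragraph. The step I expect to be the genuine obstacle is the middle one: verifying that the full-twist-then-forget construction really yields an \emph{embedded} surface in $M(n,p)$ representing precisely the class $Nv$ — with the signs forced by the intervening flips — and that this surface attains the Thurston norm. I do not expect this to require a new idea beyond Proposition \ref{prop:new_vertex_candidate} and Gabai's minimal-genus theorem for alternating diagrams; it is the careful realization of the isotopies and the resulting orientations that takes the effort, whereas the convexity conclusion itself is purely formal.
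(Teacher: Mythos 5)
Your proposal matches the paper's (largely implicit) argument: the paper offers no separate proof of this corollary, and the intended justification is exactly what you give --- convexity of the Thurston unit ball reduces everything to checking that each generator has norm at most $1$, which for $\pm e_i$ is the twice-punctured disk and for points of $V$ is the punctured-sphere representative supplied by Proposition \ref{prop:new_vertex_candidate} (iterated over several full twists), with the flips accounting for the signs and the sliding of the zero entries. One small correction: your normalization $N = n - |p| - 2$ over-specifies, since the process does not always convert all $|p|$ negative clasps into zero coordinates --- e.g.\ for $C(6,-3)$ the set $V$ contains both points with three zeros (norm $1$) and points with only two zeros (norm $2$, represented by $\tfrac12 S_{0,4}$); the correct count is $N = n - z - 2$ where $z$ is the number of zero entries, i.e.\ the number of full twists actually performed, with $z \le |p|$. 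This does not affect the convexity conclusion or properties (1)--(2), which hold with this corrected $N$.
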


We describe the topological property of various $B(n, p)$'s.
\begin{prop}\label{prop:property_of_B(n,p)}
    Let $C(n, p)$ be a negative twisted $n$-chained link.
    Choose any $1\leq i \leq n$ and collect all points in $V(n, p) \cup \{\pm e_i\}$ with $x_i = 0$.
    Then the convex hull of such points forms an $(n-1)$-dimensional polytope and is contained in the union of $B(n-1, p+1)$ and $B(n-1, p)$.
\end{prop}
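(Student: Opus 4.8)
The plan is to realize the slice as a projection of data already understood one dimension down, by running the construction of $V(n,p)$ while keeping track of the $i$-th coordinate. Let $\pi\colon\{x_i=0\}\to\R^{n-1}$ be the linear isomorphism that forgets the $i$-th coordinate. The points of $V(n,p)\cup\{\pm e_j\}_{1\le j\le n}$ lying on $\{x_i=0\}$ are exactly the $2(n-1)$ points $\pm e_j$ with $j\neq i$, which $\pi$ sends to the $2(n-1)$ vectors $\pm\pi(e_j)$ spanning $\R^{n-1}$, together with those $v\in V(n,p)$ whose set of vanishing coordinates contains $i$. Since the first group already spans, the convex hull of the slice is an $(n-1)$-dimensional polytope, which disposes of the dimension claim. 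Moreover the $\pm\pi(e_j)$ with $j\neq i$ are among the defining vertices of both $B(n-1,p+1)$ and $B(n-1,p)$, so it remains only to place each $\pi(v)$, for $v\in V(n,p)$ with $v_i=0$, inside one of these two polytopes.

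Fix such a $v$. By construction of $V(n,p)$ the entry $v_i$ was made to vanish exactly as in Proposition \ref{prop:new_vertex_candidate}: one first flips link components until $L_i$ has clasp shape $(+,-)$ or $(-,+)$, then performs a full twist on $L_i$ and forgets it. Forgetting $L_i$ leaves an $(n-1)$-chained link in which the former neighbours $L_{i-1}$ and $L_{i+1}$ are joined by one clasp, whose shape $\gamma\in\{+,-\}$ is governed by the direction of the full twist; since one of the two clasps adjacent to $L_i$ was a $-$ clasp and the other a $+$ clasp, the new shape vector has $|p|-1$ minus-clasps when $\gamma=+$ and $|p|$ when $\gamma=-$, i.e. the link is $C(n-1,p+1)$ or $C(n-1,p)$ accordingly; write $M'=M(n-1,p')$, $p'\in\{p+1,p\}$, for its complement. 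Because $v_i=0$, a routine computation with the long exact sequence of $(M(n,p),\partial M(n,p))$ (the one underlying Lemma \ref{lem:gen_bdd_formula}) shows that $\partial_* v$ restricted to the torus $T_i$ is a multiple of the meridian $m_i$; hence a norm-minimizing representative of $v$ meets $T_i$ only in meridian curves, and capping those off, exactly as in the proof of Proposition \ref{prop:new_vertex_candidate}, exhibits $\pi(v)$ as an embedded surface in $M'$, so $||\pi(v)||_{M'}\le ||v||_{M(n,p)}=1$.

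The heart of the argument is to upgrade the bound $||\pi(v)||_{M'}\le 1$ to the membership $\pi(v)\in B(n-1,p')$. For this I would verify that the construction of $V(n,p)$, read after deleting the $i$-th coordinate, \emph{is} the construction of $V(n-1,p')$: the flips and full twists that the construction of $v$ performs on the components $L_j$ with $j\neq i$, together with the further full-twist-and-forget steps producing the remaining vanishing coordinates of $v$, become — once $L_i$ has been forgotten and $\gamma$ chosen accordingly — operations of precisely the type defining $V(n-1,p')$ on $C(n-1,p')$; and since $v_{i-1}\neq 0$ and $v_{i+1}\neq 0$ (property (2) of Corollary \ref{cor:vertex_candidates}), deleting the $i$-th slot creates no new pair of consecutive zeros, so the structural constraints survive. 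Thus $\pi(v)$ is itself one of the points that construction outputs, i.e. $\pi(v)\in V(n-1,p')\cup\{\pm e_j\}_{j\neq i}\subseteq B(n-1,p')$. (When $p'=0$ one reads $B(n-1,0)$ as the genuine Thurston ball $B_{n-1}$ of Theorem \ref{thm:normball0}, whose extra vertices $\pm\tfrac{1}{n-3}(1,\dots,1)$ are exactly the images $\pi(v)$ occurring there.)

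The main obstacle is this last matching: one must check that the twist direction forced by the local sign pattern of $v$ around position $i$ is consistent with $\pi(v)$ landing on an honest construction point of $V(n-1,p')$ rather than merely somewhere in the unit ball, and that, as $v$ and $i$ range over all possibilities, both $B(n-1,p+1)$ and $B(n-1,p)$ genuinely occur — the former when forgetting $L_i$ absorbs a $-$ clasp, the latter when it re-creates one. Granting this, $\pi$ carries the convex hull of the slice into $B(n-1,p+1)\cup B(n-1,p)$, which is the assertion; everything else is bookkeeping with the basis $\{[K_j]\}$ and Proposition \ref{prop:new_vertex_candidate}.
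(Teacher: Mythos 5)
Your proposal is correct and follows essentially the same route as the paper's proof: flip so that $L_i$ has mixed clasp shape, perform a full twist on $L_i$ and forget it, identify the resulting link as $C(n-1,p+1)$ or $C(n-1,p)$ according to the sign of the clasp created between $L_{i-1}$ and $L_{i+1}$, and match the two constructions of $V$ coordinate by coordinate. If anything you are more careful than the paper about which inclusion is actually needed (slice into union, rather than union into slice) and about why deleting the $i$-th coordinate preserves the no-consecutive-zeros constraint.
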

\begin{proof}
    After flipping some of link components, we can suppose $L_i$ has a clasp shape $(-,+)$.
    Perform a full twist on $L_i$ and forget $L_i$ for a moment.
    Then the remaining link components form a new link, isomorphic to $C(n, p+1)$ or $C(n, p)$, depending on the direction of the twist. 
    (More precisely, if the full twist yields a negative shape clasp between $L_{i-1}$ and $L_{i+1}$, $C(n, p+1)$ is produced and vise versa.)
    For any points in $V(n, p)$ or $V(n, p+1)$, if we plug a $0$ at the $i$'th tuple it becomes a point which lies on the unit Thurston norm ball.
    Furthermore, such point is obtained by doing only one full twist, hence it is in $V(n, p)$.
    
\end{proof}

Some faces of the polytope $B$ are actually faces of the Thurston unit ball.
We introduce another isotopic operation of for link components which have the same clasp shape on both sides.
\begin{defn}[Squeezing]
    Suppose $L_i$ has a clasp shape $(+,+)$ or $(-,-)$.
    Perform a half twists on both sides so that each clasp alters its shape.
    We will call this operation as \emph{squeezing the link} $L_i$.
\end{defn}

\begin{figure}
    \centering
    \includegraphics[width = .6\textwidth]{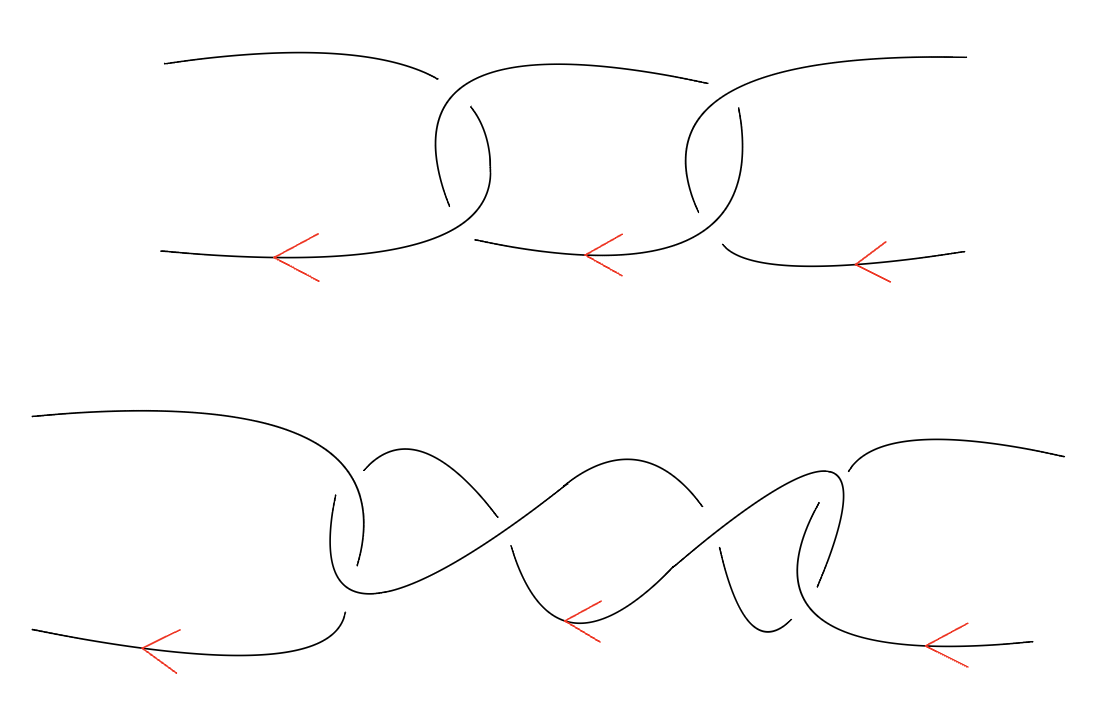}
    \caption{The link component in the middle has a $(-,-)$ shape. After squeezing, the shape of the clasp changes to $(+,+)$ with a squeezed link component.}
    \label{fig:squeezing}
\end{figure}

\begin{thm}
    Let $n \geq 4$ and $C(n, p)$ be a negative twisted $n$-chained link.
    There is a fibered face which contains the fiber obtained by squeezing one of the link components.
\end{thm}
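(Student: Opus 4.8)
The plan is to produce, for every admissible $p<0$, one concrete fibered surface in $M(n,p)$ by means of the squeeze move, and then to quote Thurston's theorem that every fiber of a fibration of $M(n,p)$ over $S^1$ lies in the open cone over a top-dimensional fibered face of the Thurston ball. Since squeezing is an isotopy of $C(n,p)$ in $S^3$, the squeezed diagram still presents $C(n,p)$ and its Seifert surface $\Sigma$ is a properly embedded surface in the same manifold $M(n,p)$; the real content will be to show that $\Sigma$ is a \emph{fibered} surface, after which the existence of a fibered face containing $[\Sigma]$ (equivalently $[\Sigma]/x([\Sigma])$) is automatic.

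First I would normalize the diagram. Using $C(n,p)\cong C(n,-n-p)$ we may assume $|p|\le\lfloor n/2\rfloor$. After untwisting the $|p|$ negative twists on $L_1$ we obtain a chain diagram whose shape vector has exactly $|p|$ minus entries. A flip acts on the shape vector by an adjacent transposition, so after a sequence of flips we may assume that the $|p|$ minus entries are consecutive; since $n-|p|\ge\lceil n/2\rceil\ge 2$, there is then a link component $L_i$ both of whose clasps are $+$ clasps, i.e. $L_i$ has shape $(+,+)$. Squeeze $L_i$. In the resulting diagram $D$ every pair of adjacent components is joined by a single $\pm$ clasp — the two clasps at $L_i$ having become $-$ clasps — and there are no accumulated half-twists anywhere.

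The heart of the argument is to identify the Seifert surface $\Sigma$ of $D$ (with all components oriented clockwise, as fixed before Corollary \ref{cor:vertex_candidates}) as an iterated Murasugi sum of $n$ Hopf bands arranged along an $n$-cycle, one band per clasp, positive or negative according to the sign of that clasp — exactly as in the analysis of Theorems \ref{thm:seifert_alg}, \ref{thm:detecting_fiber_even} and \ref{thm:detecting_fiber_odd}, but now with no $(2,2k)$-torus annuli present, precisely because squeezing adds only one half-twist on each side of $L_i$, so that Lemma \ref{lem:torusknot} is never triggered in the non-fibered direction. Granting this, Gabai's theorem (\cite{gabai1983murasugi}) shows $\Sigma$ is a fibered surface and Corollary $1.4$ of \cite{gabai1985murasugi} identifies its monodromy as the composition of the left/right Dehn twists along the cores of the plumbed Hopf bands. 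Computing $[\Sigma]$ in the basis $\{e_i\}$ is then a bookkeeping exercise in tracking the orientations assigned by Seifert's algorithm in $D$; and, a fiber being norm-minimizing, $x([\Sigma])=-\chi(\Sigma)$, which for a cyclic plumbing of $n$ annuli equals $n$.

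Finally, by Thurston's fibration theorem $[\Sigma]$ lies in the open cone over a unique top-dimensional face $\mathcal F$ of the Thurston ball of $M(n,p)$; $\mathcal F$ is a fibered face, and it contains the squeezed fiber $[\Sigma]/x([\Sigma])$. (If one wants $[\Sigma]/x([\Sigma])$ to lie in the relative interior of $\mathcal F$ rather than merely in $\overline{\mathcal F}$, one checks this using the explicit value of $[\Sigma]$ together with Lemma \ref{lem:eqn_of_fib_face} and the boundary formula of Lemma \ref{lem:gen_bdd_formula}, verifying that the only points of $V(n,p)\cup\{\pm e_j\}$ on the supporting hyperplane of the relevant facet are the expected ones.) I expect the main obstacle to be the middle step: carefully drawing the squeezed diagram $D$ and verifying that its Seifert surface genuinely decomposes as a plumbing of single Hopf bands — in particular that each gluing region is an honest $2m$-gon exhibiting the alternating up–down leg pattern required by the definition of a Murasugi sum — rather than containing a full twisted band that would destroy fiberedness; the orientation bookkeeping needed to pin down $[\Sigma]$ exactly is a secondary, purely combinatorial difficulty.
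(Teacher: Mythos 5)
Your overall strategy (exhibit the squeezed surface as a Murasugi sum of Hopf bands, invoke Gabai, conclude via Thurston's fibration theorem) is the right one, and it is what the paper's argument ultimately rests on as well. But the step you yourself flag as the ``main obstacle'' --- that the Seifert surface of the squeezed diagram, with all components oriented coherently, is a plumbing of single Hopf bands with no multiply-twisted annulus --- is the entire content of the theorem, and as you have set it up it appears to be false. First, the orientation is wrong: with all components oriented ``clockwise'' the class you produce is $\frac{1}{n}(1,\dots,1)$, which lies in the face already known to be fibered from Leininger's construction; the squeezed fiber of this theorem lives in a \emph{different} face, in the class $\frac{1}{n}(1,\dots,-1,\dots,1)$, and its construction requires reversing the orientation of the squeezed component $L_i$ (the paper's proof says explicitly ``taking the inverse orientation of $L_i$''). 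Second, the twist count is missing: clasps that do not spawn plumbed Hopf bands do not each become an independent Hopf band --- their half-twists accumulate into a single ``horizontal'' annulus running around the whole chain, exactly as in the proofs of Theorems \ref{thm:detecting_fiber_even} and \ref{thm:detecting_fiber_odd}, and by Lemma \ref{lem:torusknot} that annulus is a fibered piece only when its total twisting is $\pm 2$. Your normalization concentrates the $|p|$ minus clasps into one block but gives no accounting of how their contribution, together with the two half-twists introduced by squeezing and the sign changes introduced by the needed orientation reversal, sums to $\pm 2$; for $|p|\geq 3$ this is precisely what can fail.

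This gap is why the paper does not argue directly on the $n$-component diagram: it inducts on $|p|$, using Proposition \ref{prop:property_of_B(n,p)} to perform a full twist, pass to $C(n-1,p+1)$ where the squeezed surface is already known to be a plumbing of one horizontal and $n-1$ vertical Hopf bands, and then checks that the squeeze survives undoing the full twist. To salvage your direct route you would need to (i) reverse the orientation of $L_i$ and recompute the class, (ii) carry out the explicit Seifert-algorithm count of the horizontal annulus' twisting in the squeezed diagram, and (iii) re-express the resulting cyclic arrangement of bands as an honest iterated Murasugi sum (one long band plus $n-1$ plumbed vertical bands), since a cycle of plumbings is not literally a Murasugi sum in the sense required by Gabai's theorem. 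The final appeal to Thurston's fibration theorem is fine once fiberedness of $\Sigma$ is actually established.
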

\begin{proof}
    We will proceed by induction.
    In this proof, every full twist will be performed such that the clasp has a $+$ shape after the operation.
    \begin{enumerate}
        \item $p = -1$.
        Choose any point $q$ in $V$ and any link component $L_i$ which has a clasp shape $(+,+)$.
        There is exactly one $0$ entry in $q$. 
        Let $k$ be its index.
        By proposition \ref{prop:property_of_B(n,p)}, the slice of the unit Thurston norm ball of $C(n, p)$ at $x_k = 0$ must contain the union of $B(n-1, 0)$ and $B(n-1, -1)$.
        Choose one face in $B(n-1, 0)$.
        Since its shape vector is all $+$ (or $-$), any link component of $L_k$ has $(+,+)$ shape (or $(-,-)$).
        We choose $L_i$ except $i = k-1, k, k+1$ and squeeze it.
        Taking the inverse orientation of $L_i$, $(1, \cdots, \underbrace{-1}_{i\text{th}}, \cdots, \underbrace{0}_{k\text{th}}, \cdots, 1)$ is represented by one horizontal Hopf band Murasugi summed by $n-1$ vertical Hopf bands.
        \emph{i.e.,} $x = \frac{1}{n-1}(1, \cdots, \underbrace{-1}_{i\text{th}}, \cdots, \underbrace{0}_{k\text{th}}, \cdots, 1)$ is in the unit sphere of $C(n, -1)$.
        \newline
        Now the convex sum $\mathbf{x} := \frac{n-1}{n} \times x + \frac{1}{n} \times e_k$ is $\frac{1}{n}(1, \cdots, \underbrace{-1}_{i\text{th}}, \cdots, 1)$.
        This is still a fiber, since we choose $i$ carefully so that the squeezing still works even if we undo the full twist.
        Note that since this point is in the convex hull of $n+1$ vertices, the face containing $\mathbf{x}$ is fibered.
        \item $\floor{n/2} \leq p \leq -2$.
        By induction, we already have squeezing fibers on the face of $C(n-1, p+1)$.
        See the figure \ref{fig:squeezed_fibered_face}.
        So it remains to show that such squeezing still works after we undo the full twists.
        But since $|p+1|$ is strictly smaller than $\floor{n/2}$, there always exists a link component of shape $(+,+)$ or $(-,-)$.
        Hence by undoing full twists except near the link component, we get the fibered face which contains a squeezing fiber.
    \end{enumerate}
\end{proof}
\begin{figure}
    \centering
    \includegraphics[width = .7\textwidth]{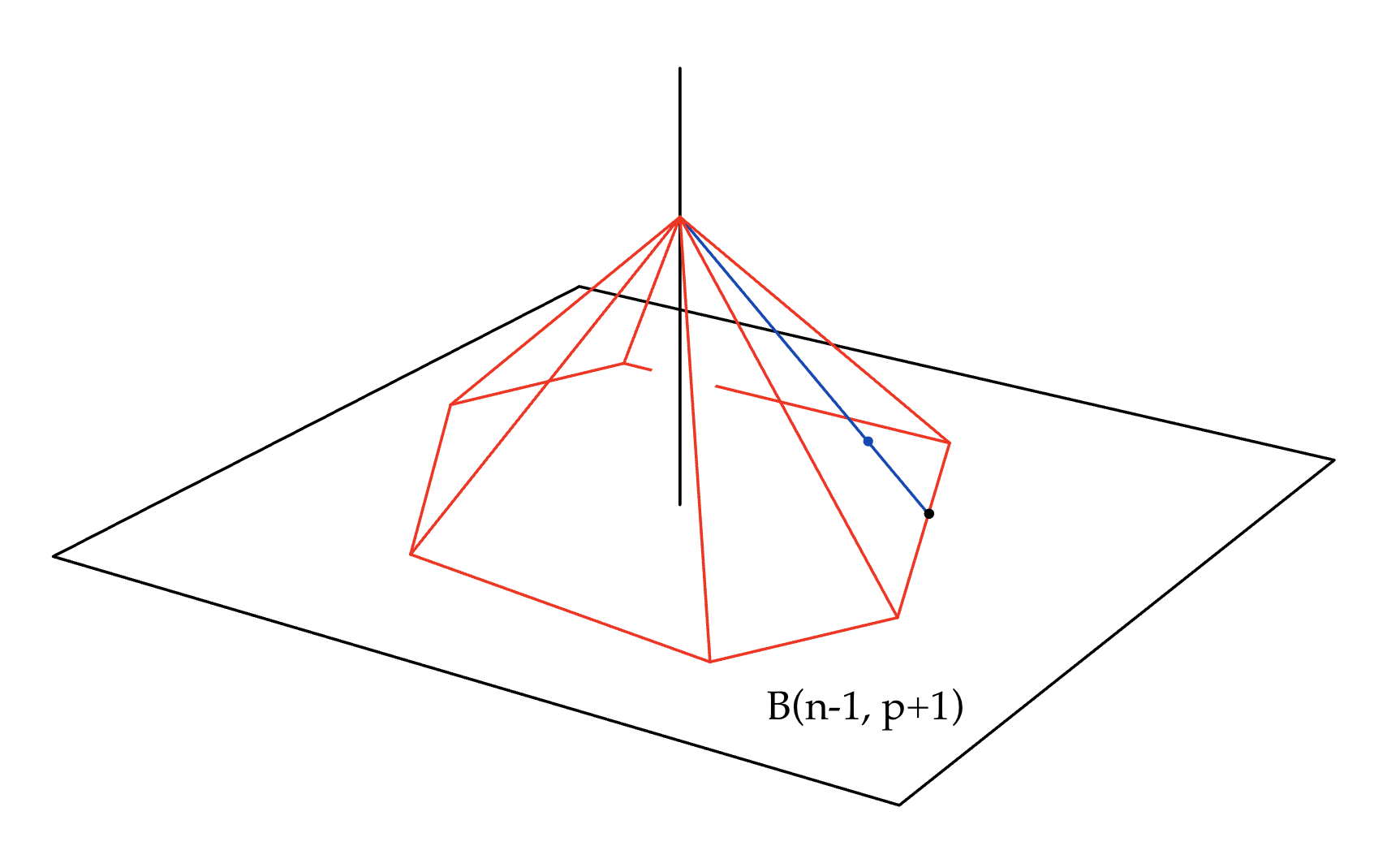}
    \caption{The vertical axis is $x_k$ coordinates, orthogonal to $\mathbb{R}^{n-1}$. The convex hull of $V(n-1, p+1)$ lies at the bottom and taking the cone with the apex $x_k = 1$. The blue dot is a fiber obtained by squeezing.}
    \label{fig:squeezed_fibered_face}
\end{figure}
The theorem implies that most of the faces in $B(n, p)$ are actually fibered faces of the Thurston unit norm ball.
We end this section with a question and some remarks.
\begin{que}
    Is $B(n,p)$ equal to the unit Thurston norm ball of $C(n, p)$ when $p<0$?
\end{que}
As far as we know, the answer is positive.
We thanks William Worden and the program Tnorm\cite{tnorm20} which helped us to calculate and verify the theorems.
We provide the table of all the vertices of the Thurson unit normal ball , calculated by Tnorm, for various $C(n, p)$'s up to $n\leq 6$ in the appendix \ref{appendix:B}.

As we already mentioned, $C(n, p)$ is fibered for $0 \leq -p \leq n$.
Assigning proper signs, Leininger's fibered surface has coordinates $(1,\cdots, 1)$ and has genus $1$ and $n$ punctures.
Since each vector $e_i$ of the canonical basis represents a twice punctured disk, we can deduce that there is a fibered face $\mathcal{F}$ which contains the standard $(n-1)$ simplex spanned by the $\{e_i\}$'s.
Furthermore, using lemma \ref{lem:eqn_of_fib_face} and similar methods as in the proof of corollary \ref{cor:sub_facets_in_C(n,0)}, we can get that the Euler characteristic of any primitive points of $(x_1, \cdots, x_n)$ with all positive entries is equal to $\sum_{i = 1}^n x_i$.
We provide some calculations for the $p<0$ case in the appendix \ref{appendix:B}.
In the remaining section, we will cover the special case of $C(n,-2)$, in which case more explicit calculations can be made.

\section{Teichm\"uller Polynomial for one fibered face of $C(n,-2)$}\label{sec:teich_polyC(n,-2)}
In this section we compute explicitly the Teichm\"uller polynomials for one fibered face of $C(n,-2)$.
Let $M_n$ be the exterior complement of the link $C(n,-2)$. 
We denote the surface obtained from performing the Seifert algorithm to $C(n,-2)$ by $S_n$.
We will sometimes omit the subscript $n$ if it is not important in the context.
Since $M_n$ is the complement of $C(n,-2)$, the second homology $H_2 = H_2(M_n, \partial M_n; \mathbb{Z})$ is a free abelian group of rank $n$, with a canonical basis given by the meridians of the link components.
With that in mind, we remark that $S_n$ is a surface of genus one with $n$ boundaries and its coordinates in $H_2$ are $(1,1,\cdots,1)$.
Since $S_n$ is a Murasugi sum of one horizontal Hopf bands with $n$ vertical Hopf bands, it is a fiber.
By Gabai's theorem, the monodromy $\varphi_n$ of this fibering is the composition of the Dehn twists along the cores of the Hopf bands.

\begin{figure}[h] 
    \centering
    \includegraphics[width=.7\linewidth]{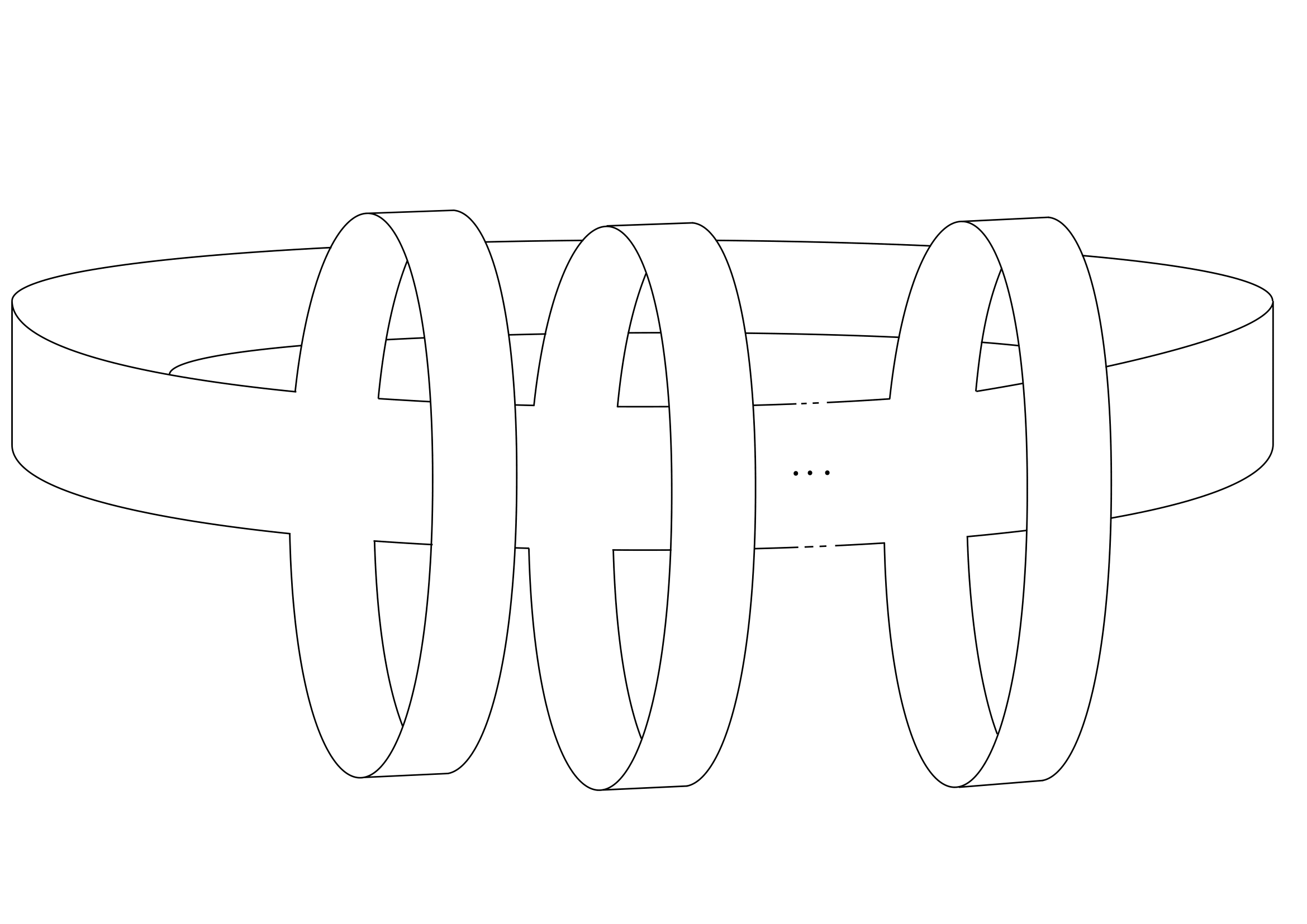}
    \caption{The Seifert surface for $C(n,-2)$, the horizontal band is a positive Hopf band and each vertical band is a negative Hopf band. Here we omit the full twists which are supposed to be at each band, as they have no role in the remainder of the calculations.}
    \label{fig:fib_of_1,C(n,-2)}
\end{figure}

Thus, if $S_n$ is placed as suggested in figure \ref{fig:fib_of_1,C(n,-2)}, the monodromy $\varphi_n$ is the composition of the $n$ vertical multi-twists directed downward followed by the left Dehn twist along the core of horizontal band.
Following the methods in \cite{billet2019teichmuller} and \cite{mcmullen2000polynomial}, we now compute the Teichm\"{u}ller polynomial corresponding to the fibered face of the Thurston unit ball which contains the fiber $(1, \cdots, 1)$.

As explained in section \ref{sec:teich}, we first need to compute $H = \Hom{(H^1(S,\mathbb{Z})^\varphi,\mathbb{Z})}$ and then understand how the lift $\Tilde{\varphi}_n$  of $\varphi_n$ acts on the cover $\Tilde{S}_n$ of $S_n$ which has $H$ as a deck transform group. 
In this case, as noted in \cite{billet2019teichmuller}, the group $H$ is simply equal to the $\varphi_n$ invariant first homology $H_1(S_n: \mathbb{Z})^{\varphi_n}$ . 
We choose $c_0,\cdots,c_n$ as a basis for $H_1(S_n; \mathbb{Z})$, where $c_0$ is the curve corresponding to the core of the horizontal band and $c_1, \cdots, c_n$ are the curves corresponding to the cores of the vertical bands, $c_1$ being the leftmost one and $c_n$ the rightmost one. 
Then $H_1(S_n: \mathbb{Z})^{\varphi_n}$ is the subspace of $H_1(S_n; \mathbb{Z})$ generated by the column vectors of

$$
B_n = 
\begin{bmatrix}
0 & 0 & \cdots & 0\\
1& 1 & \cdots & 1 \\
-1 & 0 & \cdots & 0 \\
0 & -1 & \cdots & 0\\
\vdots & \vdots & \ddots & \vdots \\ 
0 & 0 & \cdots & -1 \\
\end{bmatrix}
$$

We now need to figure out what the cover $\Tilde{S}_n$ is and how $\Tilde{\varphi}_n$ acts on it. 
Once again, the details are all given in \cite{billet2019teichmuller}, so instead of repeating them here, we will give some graphical explanation for the simplest no trivial example, that is $M_3 = C(3,-2)$. 
In this case, the cover $\Tilde{S}_n$ is explicitly drawn in figure \ref{fig:galois_cov}.

\begin{figure}[h]
    \centering
    \includegraphics[width=.7\linewidth]{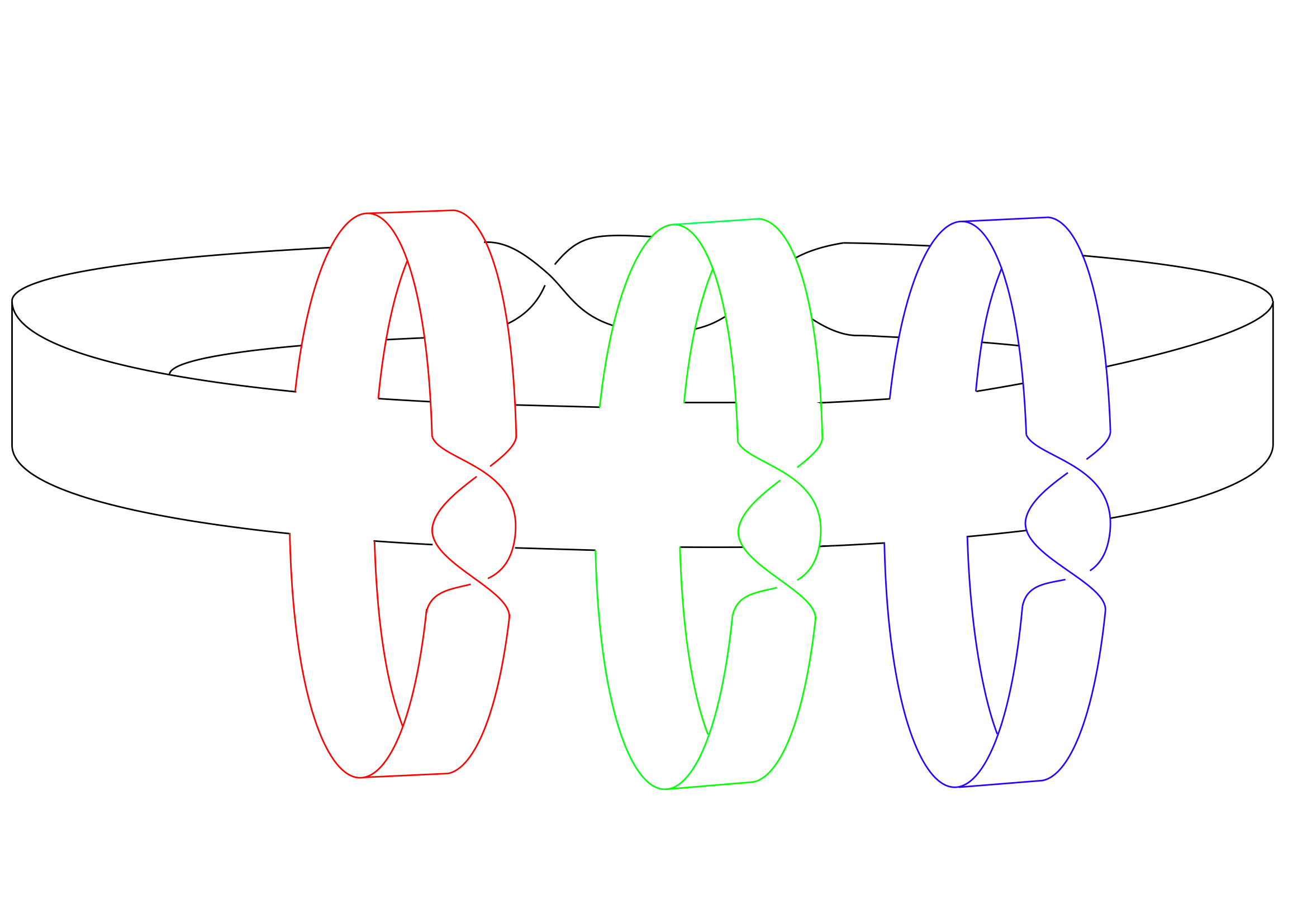}
    \caption{$S_3$, the fiber associated to the link $C(3,-2)$}
    \label{fig:S=C(4,-2)}
\end{figure}

~

\begin{figure}[h]
    \centering
    \includegraphics[width=\linewidth]{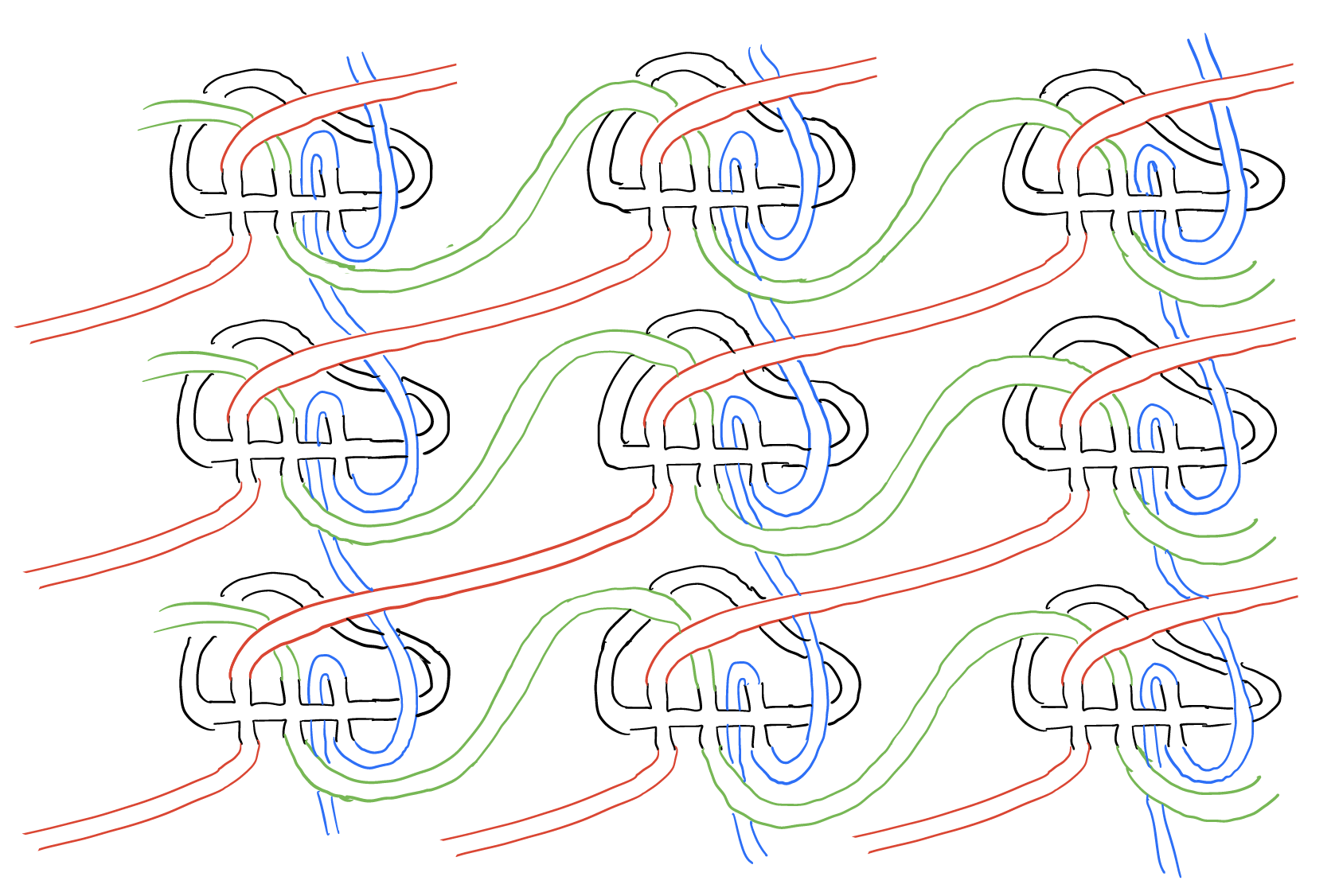}
    \caption{Galois covering $\Tilde{S}_3$ of $S_3$ with deck transform $H$}
    \label{fig:galois_cov}
\end{figure}

Let $T$ be the matrix representing the $H$-module action of $\Tilde{\varphi}_n$ on $\Tilde{S}_n$. 
The monodromy $\varphi_n$ being the composition of one horizontal Dehn twist and $n$ horizontal ones, we can decompose the matrix $T$ into $T_V$ and $T_H$. 
These matrices represent the action of the lifts of the vertical multi-twist and the horizontal Dehn twist on $\Tilde{S}$.
Note that the entries of these matrices are in $\Z[G]$, where $G$ is a deck transformation of $\widetilde{S}$, isomorphic to $\Z^{n-1}$.

Using our conventions, 
$T_V, T_H$ are $2n\times 2n$ matrices whose entries are as below; 
\[
T_V = 
\begin{bmatrix}
(x_1\cdots x_{n-1})^{-1} & 0 & \cdots & 0 & 0 & 0 & \cdots & 0\\
0 & 1 & \cdots & 0 & 0 & 0 & \cdots & 0 \\
\vdots & \vdots & \ddots & \vdots & \vdots & \vdots & \ddots & \vdots\\
0 & 0 & \cdots & (x_1\cdots x_{n-2})^{-1} & 0 & 0 & \cdots & 0 \\
1 & 0 & \cdots & 0 & 1 & 0 & \cdots & 0\\
0 & x_1^{-1} & \cdots & 0 & 0 & x_1^{-1} & \cdots & 0 \\
\vdots & \vdots & \ddots & \vdots & \vdots & \vdots & \ddots & \vdots\\
0 & 0 & \cdots & (x_1\cdots x_{n-1})^{-1} & 0 & 0 & \cdots & (x_1\cdots x_{n-1})^{-1} \\
\end{bmatrix}
\]

\[
T_H = 
\begin{bmatrix}
1 & 0 & \cdots & 0 & 1 & 1 & \cdots & 1\\
0 & 1 & \cdots & 0 & 1 & 1 & \cdots & 1 \\
\vdots & \vdots & \ddots & \vdots & \vdots & \vdots & \ddots & \vdots\\
0 & 0 & \cdots & 1 & 1 & 1 & \cdots & 1 \\
0 & 0 & \cdots & 0 & 1 & 0 & \cdots & 0\\
0 & 0 & \cdots & 0 & 0 & 1 & \cdots & 0 \\
\vdots & \vdots & \ddots & \vdots & \vdots & \vdots & \ddots & \vdots\\
0 & 0 & \cdots & 0 & 0 & 0 & \cdots & 1 \\
\end{bmatrix}
\]
We cut these matrices by 4 block matrices of $n\times n$ to abbreviate it as
\[
T_V = 
\begin{bmatrix}
D_s & 0 \\
D & D
\end{bmatrix}
\qquad 
T_H = 
\begin{bmatrix}
I & \mathbf{1} \\
0 & I
\end{bmatrix}
\]
where $D_s$ is an $n\times n$ matrix whose diagonal entries are same as $D$ shifted to the right by one and $\mathbf{1}$ is a matrix filled with $1$.

By \cite{mcmullen2000polynomial}, the Teichm\"{u}ller polynomial can be obtained by 
\[
    P(x_1, \cdots, x_{n-1}, u) := \dfrac{\det(T_VT_H - uI)}{\det(D-uI)}
\]
The remaining calculation are showed in Appendix \ref{appendix:A}. 
Let $a_k$ be the $k$th diagonal entry of $D$.
\emph{e.g.,} $a_1 = 1, a_2 = x_1^{-1}, \cdots, a_n = (x_1\cdots x_{n-1})^{-1}$.
Then we have the following formula.
\begin{thm}[Teichm\"{u}ller polynomial]\label{thm:teichpoly}
    The Teichm\"{u}ller polynomial $P$ for the fibered face $\mathcal{F}$ is
    \[
        P(x_1, \cdots, x_{n-1}, u) := A - \sum_{k = 1}^n ua_kA_k
    \]
     where $A := (a_1 - u)\cdots(a_n - u)$ and $A_k = \dfrac{A}{(a_k - u)(a_{k-1}-u)}$, subscript $k \equiv (mod~n) + 1$.
\end{thm}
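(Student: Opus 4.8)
The plan is to compute the two determinants $\det(T_VT_H - uI)$ and $\det(D - uI)$ appearing in McMullen's formula, using the block structure we've already set up, and then simplify the ratio to the closed form stated. The denominator is immediate: $D$ is diagonal with entries $a_1, \dots, a_n$, so $\det(D - uI) = \prod_{k=1}^n (a_k - u) = A$. Thus the whole content of the theorem is the claim that
\[
\det(T_VT_H - uI) = A\Bigl(A - \sum_{k=1}^n u a_k A_k\Bigr),
\]
or equivalently that $\det(T_VT_H - uI)$ factors as $A$ times the asserted polynomial $P$. The first concrete step is therefore to form the product $T_VT_H$ in block form. Writing
\[
T_V = \begin{bmatrix} D_s & 0 \\ D & D \end{bmatrix}, \qquad T_H = \begin{bmatrix} I & \mathbf{1} \\ 0 & I \end{bmatrix},
\]
we get
\[
T_VT_H = \begin{bmatrix} D_s & D_s\mathbf{1} \\ D & D\mathbf{1} + D \end{bmatrix},
\]
so that
\[
T_VT_H - uI = \begin{bmatrix} D_s - uI & D_s\mathbf{1} \\ D & D\mathbf{1} + D - uI \end{bmatrix}.
\]

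**Next I would** reduce this $2n \times 2n$ determinant to an $n \times n$ one using the Schur complement with respect to the bottom-right block, or — probably cleaner — row/column operations that exploit how simple the blocks $D$, $D_s$, and $\mathbf{1}$ are. Since the bottom-left block is the invertible diagonal matrix $D$, one can perform block column operations: subtract a suitable multiple of the first block-column from the second to kill one of the off-diagonal blocks. Concretely, using $\det \begin{bmatrix} A & B \\ C & E\end{bmatrix} = \det(C)\det\bigl(B - A C^{-1} E\bigr)$ (valid up to sign when $C$ is square invertible), we get
\[
\det(T_VT_H - uI) = \pm\det(D)\cdot \det\bigl(D_s\mathbf{1} - (D_s - uI)D^{-1}(D\mathbf{1} + D - uI)\bigr).
\]
The matrix inside simplifies because $D^{-1}(D\mathbf{1} + D - uI) = \mathbf{1} + I - uD^{-1}$, and $D_s D^{-1}$ is diagonal with entries $a_{k-1}/a_k$ (indices mod $n$, matching the shift convention for $D_s$). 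Expanding, the $\mathbf{1}$ contributions should collapse because $\mathbf{1}$ has rank one, leaving an expression of the form $\Lambda - u v w^{\top}$ or a small rank perturbation of a diagonal matrix $\Lambda$, where $\Lambda$ has diagonal entries essentially $(a_k - u)(a_{k-1} - u)/a_k$ or similar. The sign $\pm\det(D)$ needs to be tracked carefully and should come out to produce exactly the factor $A$ after combining with the $\det(D)$ already present.

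**The determinant of a rank-one perturbation of a diagonal matrix** is handled by the matrix determinant lemma: $\det(\Lambda - u v w^\top) = \det(\Lambda)\bigl(1 - u\, w^\top \Lambda^{-1} v\bigr) = \det(\Lambda) - u\sum_k \frac{v_k w_k}{\lambda_k}\det(\Lambda)$. With $\det(\Lambda) = \prod_k (a_k-u)(a_{k-1}-u)/a_k = A^2 / (a_1\cdots a_n)$ (each factor $(a_j - u)$ appearing twice around the cycle) and $\det(\Lambda)/\lambda_k$ producing exactly $A_k$ up to the normalization, the sum $\sum_k u a_k A_k$ should emerge once the bookkeeping of the vectors $v, w$ and the scalar $a_1 \cdots a_n = (x_1 \cdots x_{n-1})^{-(n-?)}$ is done. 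After multiplying through by $\det(D) = A$ and dividing by $\det(D-uI) = A$ in McMullen's formula, the leftover is $A - \sum_{k=1}^n u a_k A_k$, which is $P$.

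**The main obstacle** will be the second step: correctly carrying out the block reduction and, above all, pinning down the overall sign and the exact rank-one structure so that the cross terms from $\mathbf{1}$ telescope into the clean cyclic sum with the index shift $k \equiv (\bmod\ n) + 1$. The combinatorics of which entries of $D_s$ and $\mathbf{1}$ pair up — i.e., verifying that $D_s\mathbf{1}$ and $D\mathbf{1}$ really do cancel against each other except for a single rank-one term — is fiddly and is where an error is most likely to creep in; it is precisely the kind of computation the paper defers to Appendix \ref{appendix:A}. A useful sanity check at the end is to specialize to $n = 3$ and confirm the resulting bivariate polynomial matches a direct $6\times 6$ determinant computation for $C(3,-2)$, and also to check that setting all $x_i = 1$ recovers the dilatation data of the monodromy $\varphi_n$ on the fiber $(1,\dots,1)$ with Euler characteristic $-n$.
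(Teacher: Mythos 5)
Your plan is correct and follows essentially the same route as the paper's Appendix~\ref{appendix:A}: use the invertible (block-)diagonal pieces of $T_VT_H-uI$ to reduce the $2n\times 2n$ determinant to an $n\times n$ determinant of a diagonal matrix with entries built from $B_k=(a_{k-1}-u)(a_k-u)$ perturbed by a rank-one term coming from $\mathbf{1}$, then divide by $\det(D-uI)=A$. The only cosmetic difference is that you evaluate that diagonal-plus-rank-one determinant with the matrix determinant lemma, while the paper does the same via a term-by-term permutation expansion; both give $A^2-u\sum_k a_kAA_k=A\cdot P$.
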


The manifold $M_n$ can be viewed at the same time as a link complement and has a fibration. 

Both point of view lead to natural coordinates on $H_2 = H_2(M_n,\partial M_n ; \mathbb{Z})$. 

It is sometimes more convenient to use the coordinates coming from the llink complements point of view for the Teichm\"{u}ller polynomials. For example, if we wanted to use the Teichm\"{u}ller polynomial to compute the stretch factor of the monodromy of the fiber which has coordinates $(1,1,\cdots,1)$ in the basis given by the link components.

The Teichm\"{u}ller polynomials we computed are using the basis coming from the fibration point of view.
We thus need to find the explicit change of coordinates for going from one basis to the other.

Let us fix the notation clearly. 
The basis $Y$ given by the link complements will be denoted as $y_1,\cdots, y_n$, with $y_1$ corresponding to the link complement with the self twist. 
If the monodromy for the fibration of $M_n$ is denoted by $\varphi_n$, the corresponding basis $X$ will be $u, x_1, \cdots, x_{n-1}$ where the $x_i$ form a basis for the $\varphi_n$ invariant cohomology and $u$ corresponds to the suspension flow. 
We also let $a_0, \cdots, a_{n-1}$ be the canonical basis for $H^1(S_n, \mathbb{Z})$. 
By the computation above, we already know that $x_i = a_1 -a_{i+1}$. 
Moreover, as suggested by figure \ref{C(n,-2)_change_coordinates}, we see that $a_i = y_i -y_{i+1}$, where the indices are taken modulo $n$ as always. 
Finally, the basis element $u$ corresponding to the suspension flow is simply mapped to $y_1$.

\begin{figure}[h]
    \centering
    \includegraphics[width=1.0\linewidth]{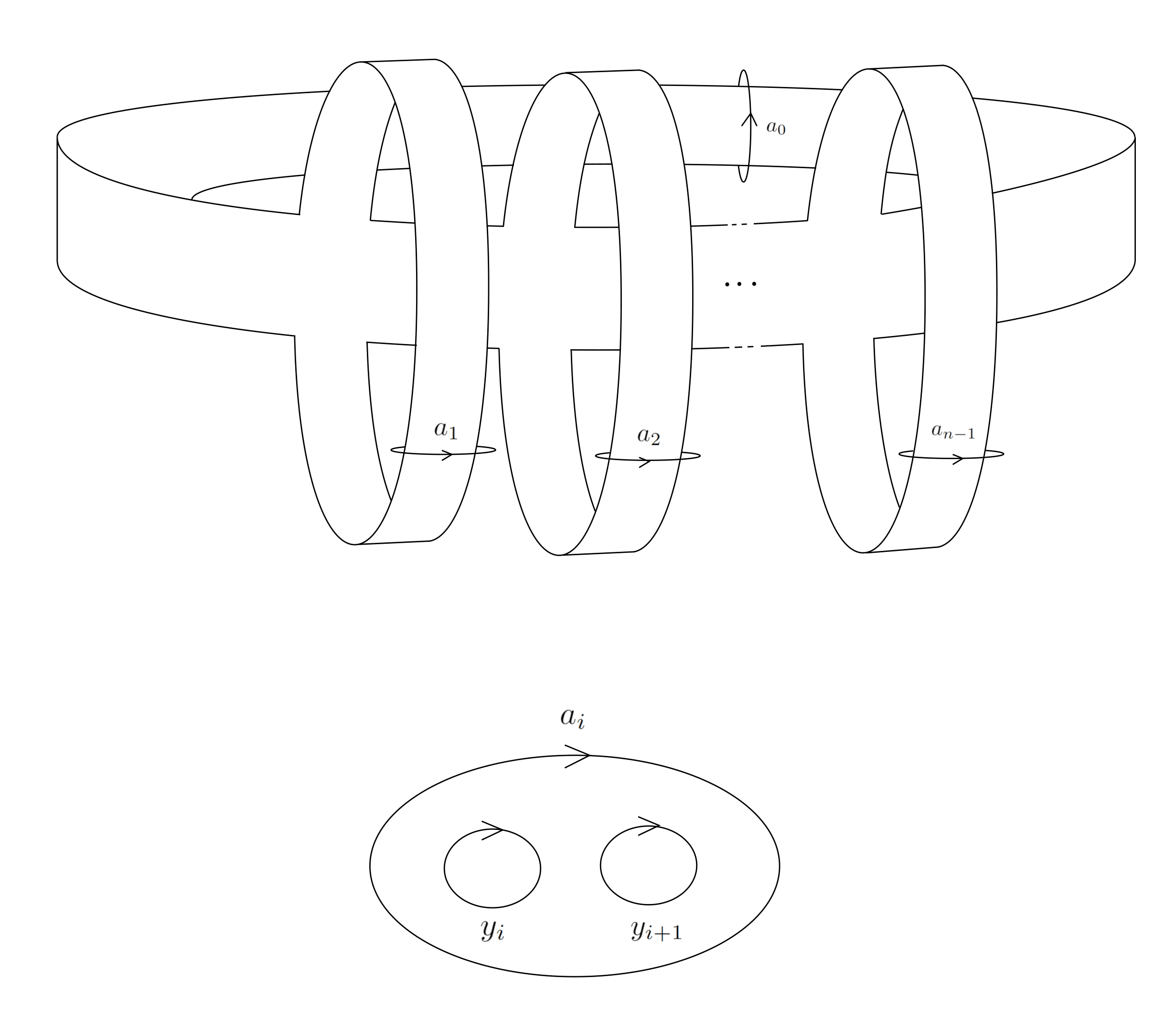}
    \caption{The surface $S_n$ with the geometric representation of the $a_i$'s. On the bottom, we see how the $a_i$'s are related to the link components, since $a_i$, $y_i$ and $y_{i+1}$ always bound a disk in $M_n$}
    \label{C(n,-2)_change_coordinates}
\end{figure}

To sum it up, the change of coordinates is given by

\begin{center}
\begin{tabular}{ c c c }
$u \to y_1$ \\
$x_1 \to y_1 - y_3$\\
$x_2 \to y_1 - y_2 +y_3 -y_4$\\
$\vdots$\\
$x_{n-2} \to y_1 - y_2 - y_{n-1} +y_n$\\
$x_{n-1} \to - y_2 +y_n$\\
\end{tabular}
\end{center}

Hence the image of the fiber whose coordinates in the basis $X$  are $(1,1, \cdots,1)$ has $(0,0,\cdots,0,1)$ as coordinates in the basis $Y$. The specialization of the Teichm\"{u}ller polynomial to the point $p = (0,0,\cdots,0,1)$ is then given by

$$
(1-u)^n -nu(1-u)^{n-2} = (1-u)^{n-2}( 1 -(n+2)u +u^2)
$$

A simple calculation shows that the largest root of this polynomial is $\frac{n +2 + \sqrt{n^2 +4n}}{2}$.

\begin{appendix}
	\section{Proof of Theorem \ref{thm:teichpoly}}\label{appendix:A}
	In this appendix we finish the calculations of the Teichm\"{u}ller polynomial of section \ref{sec:teich_polyC(n,-2)}.
	
	We need to compute the determinant of block matrices, so we make use of the following lemma.
	\begin{lem}\label{lem:blockMatrices}
		Let $M = \begin{bmatrix}
			A&B\\C&D
		\end{bmatrix}$
		a block matrix, where $A, D$ are square matrices.
		Then the determinant of $M$ is $\det(A)\det(A - BD^{-1}C) = \det(D)\det(D - CA^{-1}B)$,
		Assume further that if $C$ and $D$ commutes, then $\det(M) = \det(AD - BC)$.
	\end{lem}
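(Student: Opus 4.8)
The plan is to establish the two Schur-complement identities by explicit block factorizations, and then derive the commuting case from them together with a short genericity argument that removes the invertibility hypotheses.

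First I would treat the case where $D$ is invertible (the case of invertible $A$ being entirely symmetric). One verifies directly the block identity
\[
\begin{bmatrix} A & B \\ C & D \end{bmatrix} = \begin{bmatrix} A - BD^{-1}C & B \\ 0 & D \end{bmatrix}\begin{bmatrix} I & 0 \\ D^{-1}C & I \end{bmatrix},
\]
where the $I$'s denote identity blocks of the appropriate sizes. The determinant of a block-triangular matrix with square diagonal blocks is the product of the determinants of those blocks, and the second factor has determinant $1$; taking determinants therefore gives $\det(M) = \det(D)\det(A - BD^{-1}C)$, which is the intended reading of the first displayed identity. The mirror factorization, with a lower-triangular first factor and an upper-triangular second factor, is valid when $A$ is invertible and yields $\det(M) = \det(A)\det(D - CA^{-1}B)$.

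Next, for the commuting case, still assuming $D$ invertible, I would multiply $M$ on the right by $\begin{bmatrix} D & 0 \\ -C & I \end{bmatrix}$. The product equals $\begin{bmatrix} AD - BC & B \\ CD - DC & D \end{bmatrix}$, whose lower-left block vanishes precisely because $CD = DC$; the product is then block upper-triangular with determinant $\det(AD - BC)\det(D)$. Hence $\det(M)\det(D) = \det(AD-BC)\det(D)$, and since the coefficient ring here is an integral domain — $\Z[G]$ with $G \cong \Z^{n-1}$ is a ring of Laurent polynomials, hence a domain — and $\det(D)$ is nonzero, we may cancel to obtain $\det(M) = \det(AD - BC)$.

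Finally I would remove the hypothesis that $D$ is invertible. Adjoining an indeterminate $t$ and replacing $D$ by $D + tI$, this still commutes with $C$, and $\det(D + tI)$ is a monic (in particular nonzero) polynomial in $t$, so $D + tI$ is invertible over the fraction field of the coefficient ring for all but finitely many values of $t$. Both $\det\!\begin{bmatrix} A & B \\ C & D+tI \end{bmatrix}$ and $\det\!\big(A(D+tI) - BC\big)$ are then polynomials in $t$ that agree for all those values, hence are equal as polynomials; setting $t = 0$ concludes. The only genuinely delicate step is this specialization, which is the reason it is worth recording that the coefficient ring embeds into a field; everything else is routine block-matrix bookkeeping.
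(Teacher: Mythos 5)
Your proof is correct. The paper itself offers no argument for this lemma (it is stated as a standard fact and immediately applied in Appendix A), so there is nothing to compare against except the statement itself; your write-up supplies the missing justification. Two remarks. First, you correctly read past a typo in the statement: as written, $\det(M)=\det(A)\det(A-BD^{-1}C)$ is false in general, and the correct Schur-complement identities are $\det(M)=\det(D)\det(A-BD^{-1}C)$ when $D$ is invertible and $\det(M)=\det(A)\det(D-CA^{-1}B)$ when $A$ is invertible; your block factorization
\[
\begin{bmatrix} A & B \\ C & D \end{bmatrix} = \begin{bmatrix} A - BD^{-1}C & B \\ 0 & D \end{bmatrix}\begin{bmatrix} I & 0 \\ D^{-1}C & I \end{bmatrix}
\]
proves exactly the corrected version, which is also the version the paper actually uses. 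Second, your treatment of the commuting case is complete: the right-multiplication by $\left[\begin{smallmatrix} D & 0 \\ -C & I\end{smallmatrix}\right]$ gives $\det(M)\det(D)=\det(AD-BC)\det(D)$, and the cancellation plus the $D\mapsto D+tI$ perturbation correctly removes the hypothesis $\det(D)\neq 0$. The appeal to the coefficient ring being a domain is warranted here (the entries live in $\Z[G]$ with $G\cong\Z^{n-1}$, a Laurent polynomial ring), and in the paper's actual application the two lower blocks are diagonal, so the commuting hypothesis is satisfied and your argument applies verbatim. The only stylistic quibble is that the specialization step is cleaner if phrased as cancelling the nonzero element $\det(D+tI)$ in the domain $R[t]$ and then setting $t=0$, rather than as agreement at infinitely many values of $t$; but as stated it is still valid since the fraction field is infinite.
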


	The matrix $T_VT_H - uI$ can be expressed by the block matrix as below,
	\[
	T_VT_H - uI = 
	\begin{bmatrix}
		D_s - uI & D_s\cdot \mathbf{1} \\
		D & D\cdot \mathbf{1} + D - uI
	\end{bmatrix}
	\]
	By performing some row reductions, we can simplify the matrix:
	\[
	\begin{bmatrix}
		I & 0 \\
		-DD_s^{-1} & I
	\end{bmatrix}
	\times
	\begin{bmatrix}
		D_s - uI & D_s\cdot \mathbf{1} \\
		D & D\cdot \mathbf{1} + D - uI
	\end{bmatrix}
	=
	\begin{bmatrix}
		D_s - uI & D_s\cdot \mathbf{1} \\
		uDD_s^{-1} & D - uI
	\end{bmatrix}
	\]
	Such operation does not affect the determinant and now the bottom two block matrices are both diagonals, so they commutes.
	Hence we can apply lemma \ref{lem:blockMatrices} to compute the determinant
	\begin{align*}
		\det(T_VT_H - uI) &= \det((D_s - uI)(D - uI) - uD_s\cdot \mathbf{1}\cdot DD_s^{-1})\\
		&= \det(D_s((D_s - uI)(D - uI) - u\mathbf{1}\cdot D)D_s^{-1})\\
		&= \det((D_s - uI)(D - uI) - u\mathbf{1}\cdot D)
	\end{align*}
	Let $B_k := (a_{k-1} - u)(a_k - u)$ with the indices taken module $n$.
	Then the matrix $(D_s - uI)(D - uI) - u\mathbf{1}\cdot D$ is
	\[
	\begin{bmatrix}
		B_1 - ua_1 & -ua_1 & \cdots & -ua_1 \\
		-ua_2 & B_2 - ua_2 & \cdots & -ua_2 \\
		\vdots & \vdots & \ddots & \vdots \\
		-ua_n & -ua_n & \cdots & B_n - ua_n
	\end{bmatrix}
	\]
	Note that the determinant formula for $n\times n$ matrix $[m_{ij}]$ is 
	\[
	\sum_{\sigma \in S_n} sgn(\sigma)m_{1,\sigma(1)}\cdots m_{n,\sigma(n)} 
	\]
	We sort each summand $m_{1,\sigma(1)}\cdots m_{n,\sigma(n)}$ by how many diagonal entries they contain.
	There is the only term with $n$ diagonal entries, $(B_1 - ua_1)\cdots(B_n - ua_n)$.
	Consider this as a polynomial of $u$, say $f(u)$.
	There are no terms containing $n-1$ diagonal entries.
	Now, consider the terms containing $n-2$ diagonal entries.
	If the term misses $i$ and $j$th diagonal entries, then it has to be $-u^2a_ia_j(B_1 - ua_1)\cdots(B_n - ua_n) / (B_i - ua_i)(B_j - ua_j)$.
	Note that the sign is negative, since the permutation $\sigma$ is $(i,j)$ which has a negative sign.
	Since this term offsets every term of $u^2$ in the diagonal product, after summing these terms $f(u)$ has no $u^2$ terms.
	Iterating this process, $f(u)$ only has constant and linear terms and $u^n$ term with respect to $u$.
	
	There are a lot of $u^n$ terms from the determinant formula, but it offsets each others since it is just the determinant of $u\mathbf{1}\cdot D$.
	Hence we conclude that the determinant of given matrix is 
	\[
	P'(u) = B_1\cdots B_n - u(\sum_{k = 1}^n B_1\cdots B_n / B_k)
	\]
	The formula follows from dividing $P'$ by $\det(D - uI)$.

	\section{Some calculations of $C(n,p)$ with $p<0$}\label{appendix:B}
    In this section we give our calculations and tables of vertices for some $C(n,p), p<0$'s.
    
    We thanks to William Worden and his paper \cite{cooper2021thurston} and the program called `Tnorm' which helped us verify our theorem.
    Tnorm is able to compute the vertices of the Thurston unit ball of given links complements. 
    In the tables in this section, we list the vertices, except for vertices of the form $\pm e_i$'s, together with the topological type of their representatives. So the left columns of the tables are the coordinates of the vertices and the right columns are the corresponding surfaces representing them in the second homology groups.

    \begin{center}
        \begin{tabular}{ |c|c| }
        \hline
        \multicolumn{2}{|c|}{$C(4,-1)$} \\
        \hline
        $\pm(1, 0, 1, 1)$ & $S_{0,3}$ \\
        $\pm(1, -1, 0, 1)$ & $S_{0,3}$ \\
        $\pm(1, -1, -1, 0)$ & $S_{0,3}$ \\
        $\pm(0, -1, -1, -1)$ & $S_{0,3}$ \\
        \hline
        \end{tabular} 
    \end{center}
  
    \begin{center}
        \begin{tabular}{ |c|c||c|c| }
        \hline
        \multicolumn{2}{|c||}{$C(5,-1)$} & 
        \multicolumn{2}{|c|}{$C(5,-2)$}\\
        \hline
        $\pm(1/2, 0, 1/2, 1/2, 1/2)$ & $\frac{1}{2}S_{0,4}$ & $\pm(0, 1, 0, 1, 1)$ & $S_{0,3}$\\
        $\pm(1/2, -1/2, 0, 1/2, 1/2)$ & $\frac{1}{2}S_{0,4}$ & $\pm(0, 1, -1, 0, 1)$ & $S_{0,3}$\\
        $\pm(1/2, -1/2, -1/2, 0, 1/2)$ & $\frac{1}{2}S_{0,4}$ & $\pm(1, 0, -1, 0, 1)$ & $S_{0,3}$\\
        $\pm(1/2, -1/2, -1/2, -1/2, 0)$ & $\frac{1}{2}S_{0,4}$ & $\pm(1, 0, -1, -1, 0)$ & $S_{0,3}$\\
        $\pm(0, -1/2, -1/2, -1/2, -1/2)$ & $\frac{1}{2}S_{0,4}$ & $\pm(1, -1, 0, -1, 0)$ & $S_{0,3}$\\
        \hline
        \end{tabular}    
    \end{center}

    \begin{center}
        \begin{tabular}{ |c|c| }
        \hline
        \multicolumn{2}{|c|}{$C(6,-1)$} \\
        \hline
        $\pm(1/3, 0, 1/3, 1/3, 1/3, 1/3)$ & $\frac{1}{3}S_{0,5}$\\
        $\pm(1/3, -1/3, 0, 1/3, 1/3, 1/3)$ & $\frac{1}{3}S_{0,5}$\\
        $\pm(1/3, -1/3, -1/3, 0, 1/3, 1/3)$ & $\frac{1}{3}S_{0,5}$\\
        $\pm(1/3, -1/3, -1/3, -1/3, 0, 1/3)$ & $\frac{1}{3}S_{0,5}$\\
        $\pm(1/3, -1/3, -1/3, -1/3, -1/3, 0)$ & $\frac{1}{3}S_{0,5}$\\
        $\pm(0, -1/3, -1/3, -1/3, -1/3, -1/3)$ & $\frac{1}{3}S_{0,5}$\\
        \hline
        \end{tabular} 
    \end{center}
    \begin{center}
        \begin{tabular}{ |c|c| }
        \hline
        \multicolumn{2}{|c|}{$C(6,-2)$} \\
        \hline
        $\pm(0, 1/2, 0, 1/2, 1/2, 1/2)$ & $\frac{1}{2}S_{0,4}$\\
        $\pm(1/2, 0, -1/2, 0, 1/2, 1/2)$ & $\frac{1}{2}S_{0,4}$\\
        $\pm(1/2, -1/2, 0, -1/2, 0, 1/2)$ & $\frac{1}{2}S_{0,4}$\\
        $\pm(1/2, -1/2, 1/2, 0, -1/2, 0)$ & $\frac{1}{2}S_{0,4}$\\
        $\pm(0, 1/2, -1/2, -1/2, 0, 1/2)$ & $\frac{1}{2}S_{0,4}$\\
        $\pm(1/2, 0, -1/2, -1/2, -1/2, 0)$ & $\frac{1}{2}S_{0,4}$\\
        $\pm(0, 1/2, -1/2, 0, 1/2, 1/2)$ & $\frac{1}{2}S_{0,4}$\\
        $\pm(1/2, 0, -1/2, -1/2, 0, 1/2)$ & $\frac{1}{2}S_{0,4}$\\
        $\pm(1/2, -1/2, 0, -1/2, -1/2, 0)$ & $\frac{1}{2}S_{0,4}$\\
        \hline
        \end{tabular} 
    \end{center}
    \begin{center}
        \begin{tabular}{ |c|c| }
        \hline
        \multicolumn{2}{|c|}{$C(6,-3)$} \\
        \hline
        $\pm(0, 1/2, 1/2, 0, 1/2, 1/2)$ & $\frac{1}{2}S_{0,4}$\\
        $\pm(1/2, 0, -1/2, 1/2, 0, -1/2)$ & $\frac{1}{2}S_{0,4}$\\
        $\pm(1/2, -1/2, 0, 1/2, -1/2, 0)$ & $\frac{1}{2}S_{0,4}$\\
        $\pm(0, -1/2, 1/2, 0, 1/2, -1/2)$ & $\frac{1}{2}S_{0,4}$\\
        $\pm(-1/2, 0, 1/2, 1/2, 0, -1/2)$ & $\frac{1}{2}S_{0,4}$\\
        $\pm(1/2, 1/2, 0, -1/2, -1/2, 0)$ & $\frac{1}{2}S_{0,4}$\\
        $\pm(0, 1, 0, -1, 0, 1)$ & $S_{0,3}$\\
        $\pm(-1, 0, 1, 0, 1, 0)$ & $S_{0,3}$\\
        \hline
        \end{tabular} 
    \end{center}

\end{appendix}	
\bibliographystyle{abbrv} 
\bibliography{refs}

\end{document}